\newcommand{\stkout}[1]{\ifmmode\text{\sout{\ensuremath{#1}}}\else\sout{#1}\fi}
\numberwithin{equation}{section}
\newtheorem{prop}{Theorem}
\newtheorem{lemma}[prop]{Lemma}
\newtheorem{thm}[prop]{Theorem}
\newtheorem{cor}[prop]{Corollary}
\newtheorem{conj}[prop]{Conjecture}
\numberwithin{prop}{section}
\newtheorem{defn}[prop]{Definition}
\theoremstyle{definition}
\newtheorem{rmk}[prop]{Remark}
\definecolor{c1}{rgb}{0.2,0.4,0.5}
\definecolor{c2}{rgb}{0.1,0.3,0.5}
\definecolor{c3}{rgb}{0.2,0.7,0.5}
\def \k {K\"ahler }
\newcommand{\oo}[1]{\overline{#1}}
\newcommand{\del}{\partial}
\newcommand{\bdel}{\bar{\partial}}
\newcommand{\gw}{\omega}
\newcommand{\ten}{\otimes}
\newcommand{\dbar}{\oo\partial_{k}}
\newcommand{\dbarb}{\oo\partial_{b}}
\newcommand{\eps}{\varepsilon}
\newcommand{\dstar}{\oo{\partial}^*_k}
\newcommand{\Dbar}{\oo{D}_k}
\newcommand{\Dstar}{\oo{D}_k^*}
\DeclareMathOperator{\supp}{supp}
\DeclareMathOperator{\Ric}{Ric}
\DeclareMathOperator{\Ima}{Im}
\begin{document}

\title[Quantitative upper bounds for Bergman kernels]{Quantitative upper bounds for Bergman kernels associated to smooth K\"ahler potentials}

\begin{abstract} We prove upper bounds for the Bergman kernels associated to tensor powers of a smooth positive line bundle in terms of the rate of growth of the Taylor coefficients of the \k potential. As applications, we obtain improved off-diagonal rate of decay for the classes of analytic, quasi-analytic, and more generally Gevrey potentials.  
\end{abstract}


\author [Hezari]{Hamid Hezari}
\address{Department of Mathematics, UC Irvine, Irvine, CA 92617, USA} \email{\href{mailto:hezari@uci.edu}{hezari@uci.edu}}

\author [Xu]{Hang Xu}
\address{Department of Mathematics, Johns Hopkins University, Baltimore, MA 21218, USA}
\email{\href{mailto:hxu@math.jhu.edu}{hxu@math.jhu.edu}}

\maketitle

\section{Introduction}

Let $(L,h) \to X$ be a positive Hermitian holomorphic line bundle over a compact complex manifold of dimension $n$. The metric $h$ induces the \k form $\gw= -\tfrac{\sqrt{-1}}{2} \del \bdel \log(h)$ on $X$.  For $k$ in $\mathbb N$, let $H^0(X,L^k)$ denote the space of holomorphic sections of $L^k$. The {Bergman projection} is the orthogonal projection $\Pi_k: {L}^{2}(X,L^k) \to H^0(X,L^k)$ with respect to the natural inner product induced by the metric $h^k$ and the volume form $dV=\frac{ \gw^n }{n!}$. The \emph{Bergman kernel} $B_k$, a section of $L^k\ten \oo{L^k}$, is the distribution kernel of $\Pi_k$.
Given $p \in X$,  let $(U, e_L)$ be a local trivialization of $L$ near $p$.   We write $| e_L |^2_{h}=e^{-\phi}$ and call $\phi$ a local \k potential. In the frame $ e_L^{k} \ten {\bar{e}_L^{k}}$, the Bergman kernel $B_k(z,w)$ is understood as a function $K_k(z, w)$ on $U \times U$. We note that on the diagonal $z=w$, the function $K_k(z,z)e^{-k\phi(z)}$ is independent of the choice of the local frame, hence it is a globally defined function on $X$  called the \emph{Bergman function}, which is also equal to $| B_k(z, z)  |_{h^k}$. In addition, by our notations, $| B_k(z, w)  |_{h^k}= |K_k(z,w) |e^{-k\phi(z) /2 -k\phi(w) /2}$. To motivate our problem, we start with the following refinement of a result of M. Christ \cite{Ch2} which improves the Agmon estimates.

\begin{thm}\label{christ} 
Assume $h \in C^\infty$. Let $d(z,w)$ be the distance function on $X$ associated with the \k metric $\omega$. Then there exists an increasing function $f(k) \to \infty$ as $k \to \infty$ such that for all $z$ and $w$ in $X$, we have
\begin{equation}\label{eq christ}
|B_k(z,w)|_{h^k} \leq 
\begin{cases}
C k^{n} e^{-c \, k d(z,w)^2}, \quad & \mbox{ when } d(z,w)\leq f(k)\sqrt{\frac{\log k}{k}},\\
C k^{n} e^{-c \, f(k)\sqrt{k\log k} \,\, d(z, w)}, \quad & \mbox{ when } d(z,w) \geq f(k)\sqrt{\frac{\log k}{k}}.
\end{cases}
\end{equation} 
where $c$ and $C$ are positive constants that depend only on $(L, h)$ and $X$. 
\end{thm}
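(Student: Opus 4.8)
The plan is to combine three ingredients: the classical Agmon-type bound $|B_k(z,w)|_{h^k}\le C k^n e^{-c\sqrt{k}\,d(z,w)}$ valid for all $z,w$; a near-diagonal \emph{Gaussian} bound $|B_k(z,w)|_{h^k}\le Ck^n e^{-c\,k\,d(z,w)^2}$ valid for $d(z,w)\le R_k$; and the idempotence of the Bergman projection, in the form $|B_k(z,w)|_{h^k}\le\int_X|B_k(z,u)|_{h^k}\,|B_k(w,u)|_{h^k}\,dV(u)$. Here $R_k=f(k)\sqrt{\log k/k}$ for a function $f(k)\to\infty$ to be fixed at the end; establishing this enlarged Gaussian estimate (beyond the naive scale $\sqrt{\log k /k}$) is the only place where the finer structure of the smooth potential $\phi$ enters — one would obtain it from the off-diagonal expansion of $B_k$, quantifying the remainder at distance $R_k$ in terms of the growth of the higher Taylor coefficients of $\phi$. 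Granting these inputs, the case $d(z,w)\le R_k$ of the theorem is immediate from the Gaussian bound, so the whole content is the far off-diagonal regime $d:=d(z,w)\ge R_k$.

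For that regime, iterate the reproducing inequality $m:=\lceil d/R_k\rceil$ times (the $h^k$-norm is symmetric, so the conjugations are harmless under absolute values) to get, with $u_0=z$ and $u_m=w$,
\[
|B_k(z,w)|_{h^k}\ \le\ \int_{X^{m-1}}\ \prod_{i=0}^{m-1}|B_k(u_i,u_{i+1})|_{h^k}\ dV(u_1)\cdots dV(u_{m-1}).
\]
Bound each factor by $Ck^n e^{-c k\,d(u_i,u_{i+1})^2}$ when $d(u_i,u_{i+1})\le R_k$ and by the Agmon bound $Ck^n e^{-c\sqrt k\,d(u_i,u_{i+1})}$ otherwise. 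The dominant contribution comes from configurations in which every hop is short: there the product of Gaussians is at most $(Ck^n)^m e^{-ck\sum_i d(u_i,u_{i+1})^2}$, and by convexity together with the triangle inequality, $\sum_i d(u_i,u_{i+1})^2\ge \tfrac1m\big(\sum_i d(u_i,u_{i+1})\big)^2\ge \tfrac1m d(z,w)^2$, so the product is $\le (Ck^n)^m e^{-ck\,d(z,w)^2/m}$. The remaining $(m-1)$-fold integration is controlled by the convolution (semigroup) property of the heat kernel of $(X,\omega)$: a product of $m$ Gaussians each concentrated at scale $k^{-1/2}$ convolves to a single Gaussian concentrated at scale $(m/k)^{1/2}$, yielding a factor comparable to $(k/m)^n e^{-c'k\,d(z,w)^2/m}$ times an $m$-th power $\kappa^m$ of an absolute constant. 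Collecting everything, the short-hop contribution is $\le \kappa^m k^n e^{-c'' k\,d(z,w)^2/m}$, and since $m\approx d/R_k$ one has $k\,d(z,w)^2/m\approx kR_k\,d(z,w)=f(k)\sqrt{k\log k}\,d(z,w)$, which is precisely the exponent of the theorem; the loss $\kappa^m=e^{O(d(z,w)\sqrt k/(f(k)\sqrt{\log k}))}$ is of lower exponential order than the gain because $f(k)\to\infty$, so it is absorbed by shrinking the constant. A configuration with at least one long hop carries either the super-exponentially small factor $e^{-ckR_k^2}=k^{-cf(k)^2}$ (if the long hop has length between $R_k$ and $d/2$) or the Agmon factor $e^{-c\sqrt k\,d(z,w)/2}$ (if it is longer than $d/2$); combined with a crude bound on the other hops, each such contribution is beaten by the main term once $f$ grows slowly enough.

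It remains to fix $f$. One chooses $f(k)\to\infty$ slowly enough that (i) the remainder in the off-diagonal expansion of $B_k$ is genuinely controlled out to distance $f(k)\sqrt{\log k/k}$ — the one step using the smoothness/Taylor data of $\phi$ — and (ii) the accumulated per-hop constants $\kappa^m$, with $m$ as large as $\operatorname{diam}(X)\sqrt{k}/(f(k)\sqrt{\log k})$, are absorbed into the main exponential decay. I expect ingredient (2), the near-diagonal Gaussian estimate past the scale $\sqrt{\log k/k}$, to be the real obstacle: it forces one past the "quadratic model plus $O(1)$ error" heuristic and requires quantifying the error in the Bergman expansion in terms of higher derivatives of the potential. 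The hopping argument of the second paragraph is by contrast soft once the two pointwise inputs are available; its only delicate point is the bookkeeping showing that long-hop configurations and the exponentially-in-$m$ many accumulated constants do not spoil the estimate.
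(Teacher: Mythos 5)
Your overall architecture — push a Gaussian bound out to scale $R_k=f(k)\sqrt{\log k/k}$, then iterate to propagate decay to larger distances — parallels the paper's logic, but the two specific iteration mechanisms are genuinely different, and yours has a gap in the long-hop bookkeeping that I don't see how to close with the ingredients you've allowed yourself.

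The paper does \emph{not} iterate the reproducing identity $\Pi_k=\Pi_k^2$. It works with the Green kernel $G_k$ of the Hodge–Kodaira Laplacian $\Box_k$ on $(0,1)$-forms, proves the Gaussian bound for $\mathcal G_k$ on the annulus $\gamma\sqrt{\log k/k}\le d\le R_k$ via analytic hypoellipticity of $\Box_b$ (Tartakoff–Tr\`eves) transported to the smooth case by approximating $\phi$ and $g$ by their Taylor polynomials (Lemma \ref{improved estimates}), and then constructs a \emph{compactly supported approximate inverse} $T=P\sum_{j<N}(I-\Box_k P)^j$ where $P$ is $G_k$ truncated to $d\le R_k$. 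The key point is that $E=I-\Box_k P$ has operator norm $\le e^{-bf^2\log k}$ and kernel supported in $\{d\le R_k\}$, so the Neumann series gives $\|\Box_k T-I\|\le e^{-bNf^2\log k}$ with $T$ supported in $\{d\le NR_k\}$; choosing $N\sim d/R_k$ kills $T$ at $(z,w)$ and leaves only the error, whose operator norm is exactly $e^{-bf\sqrt{k\log k}\,d}$. There is never a ``long hop'' to worry about: the truncation $P$ and the support control of $E^j$ make the construction insensitive to the global kernel of $G_k$, and all the gain is captured by operator-norm submultiplicativity on $L^2$.

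Your hopping argument, by contrast, iterates the untruncated kernel $B_k$, so every step can in principle be long, and you must show that such configurations are negligible. Here is the concrete gap: for a hop of length $\ell\in(R_k,\,d/2]$ you write down the factor $e^{-ckR_k^2}=k^{-cf(k)^2}$, but that is the \emph{value of the Gaussian bound at the boundary of its range of validity}, not an upper bound on $|B_k|$ at distance $\ell>R_k$. The only pointwise bound available beyond $R_k$ in your set-up is Agmon, $Ck^ne^{-c\sqrt k\,\ell}$, and at $\ell\sim R_k$ this is only $e^{-cf(k)\sqrt{\log k}}$, which is \emph{exponentially larger} than $e^{-cf(k)^2\log k}$ whenever $f(k)\sqrt{\log k}\to\infty$ (which it does). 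A crude count then suggests that a configuration with one hop just barely longer than $R_k$ actually \emph{beats} the all-short-hop main term — the drop from Gaussian to Agmon across the cutoff costs a factor $e^{cf^2\log k - cf\sqrt{\log k}}$, and nothing in your argument pays for it. So the claim that ``each such contribution is beaten by the main term once $f$ grows slowly enough'' needs a real mechanism (some interpolation, or a truncation/parametrix identity that makes the untruncated tail irrelevant, as the Green-kernel route does), not just the two pointwise bounds plus convexity. Separately, the paper obtains the enlarged near-diagonal Gaussian bound not by quantifying the remainder of the Bergman asymptotic expansion, as you propose, but by analytic-hypoellipticity plus polynomial approximation of the potential in the class $\mathcal C_M$ — so your ingredient (2) would also need a different proof than the one you sketch.
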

Although in \cite{Ch2}, this result is only stated for $d(z,w) > \delta$, for $\delta>0$ independent of $k$, it is implicit in their proof.  The goal of this paper is to prove a quantitative version of the above theorem that relates the growth rate of $f(k)$ to the growth rate of the derivatives of $h$,  and use it to obtain improvements in the cases where $h$ is analytic, quasi-analytic, or more generally a Gevrey function. 

To state our main result we need some notations. Let $M$ be a positive $C^2$ function on $\mathbb{R}_{>0}$. We say $\partial^2\log h$ belongs to the class $\mathcal C_M$ if for every local \k potential $\phi$ on an open set $U$, there exists $A>0$ such that for any multi-index $\alpha \in \mathbb{Z}_{\geq 0}^{2n}$ with $| \alpha| \neq 0$, and any $z\in U$, 
		\begin{equation*}
			\left|\frac{D^{\alpha} (\partial^2\phi )}{\alpha!}(z)\right|\leq A^{|\alpha|} {M( |\alpha|)}.
		\end{equation*}
Here $D^\alpha = \partial_{z_1}^{\alpha_1} \dots \partial_{z_n}^{\alpha_n}\partial_{\bar{z}_1}^{\alpha_{n+1}} \dots \partial_{\bar{z}_n}^{\alpha_{2n}}$, and $\partial^2 \phi$ means any second order partial derivative of $\phi$. We shall call $M$ a majorant of the Taylor coefficients of $\partial^2 \phi$. 

\begin{thm}\label{main} Suppose $\partial^2 \log h \in \mathcal C_M$ and $\log \left({M}\right)$ is strictly convex on $\mathbb{R}_{>0}$. For each $N \in {\mathbb R}_{>0}$, let $J(N)=M(N)^{1/N}$ and assume $J(N)$ is unbounded. Then there exists $k_0$ such that for each $k \geq k_0$, the equation 
\begin{equation}\label{solution N}
 N^2 J(N) J'(N) e^{\frac{2N J'(N)}{J(N)}} =k,
\end{equation} has a unique solution $N(k) \in \mathbb{R}_{>0}$ and $f(k)$ given by
\begin{equation}\label{growth f}
f(k)=  \frac{N(k) \sqrt{J'(N(k)) /J(N(k))}} {\sqrt{\log k}}, \end{equation}
satisfies Theorem \ref{christ}. When $k < k_0$, for simplicity we define $f(k)=f(k_0)$.
 \end{thm}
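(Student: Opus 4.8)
The plan is to prove a quantitative off-diagonal Agmon-type estimate by mimicking the near-diagonal Bergman kernel expansion, but carefully tracking how the truncation error depends on the order of the expansion and on the growth majorant $M$. The starting point is the standard observation that, after choosing local \k-coordinates centered at a point $p$ and a suitable local frame, the Bergman kernel is comparable to the Bergman kernel of the model weight $k|z|^2$, up to a correction governed by the Taylor remainder of the potential $\phi$. Concretely, I would write $\phi(z) = |z|^2 + R(z)$ where $R$ collects the terms of Taylor order $\geq 3$ (the quadratic and lower terms being absorbed by the choice of frame and coordinates), and then for a cutoff at order $N$ split $R = R_{\leq N} + R_{>N}$. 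The polynomial part $R_{\leq N}$ is handled by the Tian--Yau--Zelditch / Bergman kernel machinery to produce an approximate kernel; the key new ingredient is that $R_{>N}$ is controlled, via the hypothesis $\partial^2\log h \in \mathcal C_M$, by a geometric-type series in $A|z|$ with coefficients $M(|\alpha|)$, hence on a ball of radius $\rho$ roughly of size $\sum_{j>N} (A\rho)^j M(j)$.

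The second step is the optimization. Using the $\bar\partial$-technique of Christ (peak-section construction plus Hörmander-type $L^2$-estimates with the plurisubharmonic weight $k\phi + 2n\log d(z,w)$ as in \cite{Ch2}), the off-diagonal bound acquires a factor of the form $e^{-ckd(z,w)^2}$ from the model together with an error of order (schematically) $k^n\big(\text{size of }R_{>N}\text{ on the relevant ball}\big) + k^n e^{-cN}$ from truncating at order $N$; the radius of the relevant ball is of order $\sqrt{N/k}$ because the Gaussian weight $e^{-k|z|^2}$ localizes at that scale and one needs $N$ derivatives' worth of accuracy. Substituting this radius into $\sum_{j>N}(A\rho)^j M(j)$ and using $M(j)^{1/j} = J(j)$ and the strict convexity of $\log M$ (which makes $J$ increasing and lets me compare the tail sum to its first term times a convergent factor), the error term becomes essentially $\exp\big(-cN\log\frac{k}{N J(N)\cdots}\big)$ type quantity, i.e.\ comparable to $e^{-c f(k)\sqrt{k\log k}\, d(z,w)}$ precisely when $N$ is chosen so that $A^2\rho^2 J(N)^2 \sim$ a fixed small constant, which after writing $\rho^2 \sim N J'(N)/(k J(N))$ (the corrected localization scale coming from the $J$-dependent radius of convergence) becomes exactly equation \eqref{solution N}. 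The existence and uniqueness of $N(k)$ solving \eqref{solution N} follows because the left-hand side is continuous, tends to $0$ as $N\to 0^+$ and to $\infty$ as $N\to\infty$ (using that $J$ is unbounded and, by convexity of $\log M$, that $J'/J$ and $NJ'/J$ are monotone), and is strictly increasing in $N$; this is where $k_0$ enters, since for small $k$ the balance point $N(k)$ may fail to lie in the region where the local expansion is valid, and we simply freeze $f(k)=f(k_0)$.

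The third step is to convert the near-diagonal estimate, valid for $d(z,w)$ up to a fixed small $\delta$, into the global statement of Theorem \ref{christ}: for $d(z,w)\geq\delta$ one uses Christ's global $\bar\partial$-argument directly, which already gives exponential decay in $d(z,w)$ at rate comparable to $\sqrt{k\log k}$, and one checks that the constant produced there is at least $c\,f(k)\sqrt{k\log k}$ with the $f(k)$ defined by \eqref{growth f} — this is consistent because in that regime $f(k)$ grows only like a slowly varying function and the $d(z,w)^2$ vs.\ $d(z,w)$ dichotomy in \eqref{eq christ} is arranged exactly at the crossover $d(z,w)\sim f(k)\sqrt{\log k / k}$. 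Finally one verifies the formula \eqref{growth f}: with $N(k)$ as above, the crossover radius $\rho$ at which the near-diagonal Gaussian regime gives way to the linear-exponential regime is $\rho \sim N(k)\sqrt{J'(N(k))/J(N(k))}/\sqrt{k}$, and dividing by $\sqrt{\log k/k}$ yields precisely \eqref{growth f}; one also checks $f(k)\to\infty$, which follows since $N(k)\to\infty$ and $J$ is unbounded.

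\textbf{Main obstacle.} The delicate point is the simultaneous bookkeeping in the optimization step: one must track the exact power of $k$ (the $k^n$ prefactor and the constants $c$) through the peak-section / Hörmander construction while the order $N$ of the expansion is itself tending to infinity with $k$, so the usual "fixed finite expansion" error estimates must be replaced by ones uniform in $N$, and the geometric tail $\sum_{j>N}(A\rho)^j M(j)$ must be summed using only the convexity of $\log M$ (not analyticity). Getting the radius-of-localization scale $\rho^2 \sim N J'(N)/(k J(N))$ right — rather than the naive $\rho^2\sim N/k$ — and seeing that it is exactly this correction that produces the transcendental equation \eqref{solution N}, is the crux of the argument.
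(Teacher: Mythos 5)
Your proposal correctly identifies the broad shape of the argument — truncate the Taylor expansion of the potential at an order $N$ depending on $k$, control the tail via the $\mathcal{C}_M$ hypothesis, and optimize over $N$ to balance two competing error terms, thereby arriving at a transcendental equation for $N(k)$ — and the ``main obstacle'' you name (uniformity of the error estimates in $N$ as $N\to\infty$ with $k$) is indeed the crux. But there are two genuine gaps.

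The first and most serious: the engine you propose for the near-diagonal estimate is a peak-section/H\"ormander-$L^2$ construction with a singular plurisubharmonic weight. That technique is what produces the Agmon-type bound $e^{-c\sqrt{k}\,d(z,w)}$ of \eqref{Agmon}; it does not, by itself, yield the Gaussian decay $e^{-ck\,d(z,w)^2}$ that Theorem \ref{christ} requires out to radius $f(k)\sqrt{\log k/k}$. The ingredient that actually produces this decay is not an $L^2$-estimate for $\bar\partial$ but rather the relation \eqref{Bergman and Green Kernel} expressing $B_k$ in terms of the Green kernel of the Hodge--Kodaira Laplacian $\Box_k$, together with the \emph{analytic hypoellipticity of the Kohn Laplacian} on the CR manifold $B\times\mathbb{R}$ (Tartakoff--Tr\`eves, encapsulated in Lemma \ref{lemma hypoanalyticity}), applied to the polynomial approximations $\Psi_{N,r}$ and $g_{N,r}$ of $\phi$ and $g$ on a rescaled ball, and then transferred back to the true Laplacian via a solvability-and-bootstrap comparison (Lemma \ref{improved estimates}). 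You correctly observe that the Taylor-truncated potential is analytic, but you still need a device that converts analyticity of the data into quantitative exponential smallness $e^{-bkr^2}$ of homogeneous solutions of the twisted Laplacian on the rescaled ball; nothing in your sketch supplies that, and the Tian--Yau--Zelditch parametrix machinery you invoke only governs the regime $d(z,w)\lesssim\sqrt{\log k/k}$.

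Second, a computational slip that matters: your localization scale $\rho^2\sim N J'(N)/(kJ(N))$ is off by a factor of $N$. The correct pair of constraints is $r=\eps\,M(N)^{-1/N}$ (so that $\Psi_{N,r}$ and $g_{N,r}$ are analytic and plurisubharmonic/positive \emph{uniformly in $N$} on the unit rescaled ball) and $kr^2=N\log(1/\eps)$ (matching the hypoellipticity gain $e^{-bkr^2}$ against the truncation error $e^{-bN\log(1/\eps)}$ in \eqref{u bound 1}). Maximizing $kr^2$ subject to these yields, at the critical point, $\log(1/\eps)=NJ'/J$ and hence $kr^2=N^2 J'/J$, i.e.\ $r^2\sim N^2 J'(N)/(kJ(N))$; substituting back gives precisely \eqref{solution N} and then \eqref{growth f}. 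Without identifying this specific pair of constraints the equation \eqref{solution N} is not actually derived. Relatedly, your claim that for $d(z,w)\geq\delta$ a ``global $\bar\partial$-argument already gives decay at rate $\sqrt{k\log k}$'' is not correct: the $\bar\partial$-argument gives only the $\sqrt{k}$ rate. The far-from-diagonal rate $f(k)\sqrt{k\log k}$ is obtained by a Neumann-series iteration of a local Green kernel (Lemma \ref{Local Green Kernel}), which propagates the near-diagonal Green-kernel estimate outward in steps of size $f(k)\sqrt{\log k/k}$.
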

 
 Let us now draw some quick corollaries of this theorem. We say $h$ belongs to the Gevrey class $G^s$, $s\geq 1$, if $M(N)= N^{(s-1)N}$. Since this $M$ satisfies the conditions of the theorem for $s>1$ (but not for $s=1$) we immediately get $f(k)= \frac{\delta k^{\frac{1}{4s-2}}}{\sqrt{\log k}}$, for $\delta >0$ independent of $k$, thus we obtain the following improved upper bounds:
\begin{cor}  \label{Main} Assume $h \in G^s$ \footnote{Note that $\phi \in G^s$ if and only if $h=e^{-\phi} \in G^s$.}, $s > 1$. Then for all $k \in \mathbb N$, and all $z, w \in X$, we have
\begin{equation}\label{MainEstimate}
|B_k(z, w)|_{h^k} \leq 
\begin{cases}
C k^{n} e^{-c \, k d(z,w)^2},\qquad & d(z,w)\leq k^{- \frac{s-1}{2s-1}},\\
C k^{n} e^{-c \, k^{\frac{s}{2s-1}} d(z,w)}, \qquad & d(z,w)\geq k^{-\frac{s-1}{2s-1}}.
\end{cases}
\end{equation} 
where $c$ and $C$ are positive constants dependent only on $(L, h)$ and $X$.
\end{cor}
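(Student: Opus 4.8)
The plan is to \emph{specialize Theorem \ref{main}} to the Gevrey majorant $M(N) = N^{(s-1)N}$ and then feed the resulting $f(k)$ into Theorem \ref{christ}. First I would verify the three hypotheses of Theorem \ref{main} for this $M$. By definition, $h \in G^s$ means exactly that $\partial^2 \log h$ lies in $\mathcal C_M$ with $M(N) = N^{(s-1)N}$ (the footnote's remark that $\phi \in G^s$ iff $h=e^{-\phi}\in G^s$ is what makes this compatible with passing to arbitrary local \k potentials). Next, $\log M(N) = (s-1)N\log N$ has second derivative $(s-1)/N$, which is strictly positive on $\mathbb{R}_{>0}$ precisely because $s>1$; this supplies the strict-convexity hypothesis, and it is exactly the condition that fails at $s=1$. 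Finally $J(N) = M(N)^{1/N} = N^{s-1}$ is unbounded for $s>1$, as required.

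Then I would solve \eqref{solution N} in closed form. Since $J(N) = N^{s-1}$, we have $J'(N) = (s-1)N^{s-2}$, hence $J'(N)/J(N) = (s-1)/N$, so $NJ'(N)/J(N) = s-1$ and the exponential factor $e^{2NJ'(N)/J(N)} = e^{2(s-1)}$ is a constant. Also $N^2 J(N) J'(N) = (s-1)N^{2s-1}$, so \eqref{solution N} becomes $(s-1)e^{2(s-1)} N^{2s-1} = k$, giving
\[
N(k) = \left( \frac{k}{(s-1)e^{2(s-1)}} \right)^{\frac{1}{2s-1}}.
\]
Substituting into \eqref{growth f} and using $\sqrt{J'(N(k))/J(N(k))} = \sqrt{(s-1)/N(k)}$,
\[
f(k) = \frac{N(k)\sqrt{(s-1)/N(k)}}{\sqrt{\log k}} = \sqrt{s-1}\,\frac{\sqrt{N(k)}}{\sqrt{\log k}} = \frac{\delta\, k^{\frac{1}{4s-2}}}{\sqrt{\log k}}, \qquad \delta := \sqrt{s-1}\,\bigl((s-1)e^{2(s-1)}\bigr)^{-\frac{1}{4s-2}},
\]
which is the claimed growth rate.

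It remains to insert this $f(k)$ into the dichotomy of Theorem \ref{christ}. A direct computation gives the threshold $f(k)\sqrt{\log k / k} = \delta\, k^{-\frac{s-1}{2s-1}}$ and the exponent rate $f(k)\sqrt{k\log k} = \delta\, k^{\frac{s}{2s-1}}$; plugging these into \eqref{eq christ} yields \eqref{MainEstimate} with $k^{-(s-1)/(2s-1)}$ replaced by $\delta\, k^{-(s-1)/(2s-1)}$ and with an extra factor $\delta$ in the exponent of the second regime. The only point needing (routine) care is normalizing away this $\delta$ to get the clean thresholds in the statement: in the overlap region where $d(z,w)$ is comparable to $k^{-(s-1)/(2s-1)}$ the two alternatives in \eqref{eq christ} are comparable, since there $k\, d(z,w)^2 \asymp k^{1/(2s-1)} \asymp k^{s/(2s-1)} d(z,w)$, so shrinking $c$ and enlarging $C$ lets either bound cover the slightly shifted region; the same enlargement of $C$ absorbs the finitely many $k<k_0$, where $f(k)=f(k_0)$. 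Since the whole argument is an explicit substitution once the hypotheses of Theorem \ref{main} are checked, there is no genuine obstacle — the one thing one must not overlook is the strict-convexity verification, which is exactly why the borderline analytic case $s=1$ is excluded here and needs separate treatment.
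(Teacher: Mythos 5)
Your proposal is correct and follows exactly the paper's approach: specialize Theorem \ref{main} to the Gevrey majorant $M(N)=N^{(s-1)N}$, verify strict log-convexity and unboundedness of $J$, solve \eqref{solution N} in closed form to get $f(k)=\delta k^{1/(4s-2)}/\sqrt{\log k}$, and substitute into Theorem \ref{christ}. The computations of $N(k)$, $\delta$, the threshold exponent $-(s-1)/(2s-1)$, and the decay rate $s/(2s-1)$ all check out, and the final normalization to absorb $\delta$ into $c,C$ is the same routine step the paper leaves implicit.
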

In the special case $s=1$, meaning when the metric $h$ is analytic, we obtain the following estimate which was stated without a proof in Remark 6.6 of \cite{Ch}:
\begin{cor}\label{Analytic} Assume $h$ is analytic. Then for all $k \in \mathbb N$ and $z,w \in X$, we have
\begin{equation} 
\left | B_k(z, w) \right |_{h^k} 
\leq  C k^n e^{- c k d(z,w)^2}.
\end{equation}
\end{cor}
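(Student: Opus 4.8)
The plan is to derive Corollary \ref{Analytic} as a limiting case of Corollary \ref{Main} combined with the continuity of Bergman kernel estimates, but since $s=1$ is explicitly excluded from Theorem \ref{main} (the Gevrey majorant $M(N)=N^{(s-1)N}\equiv 1$ is bounded when $s=1$, so $J(N)$ is not unbounded), I would instead prove it directly by exploiting the rigidity of the analytic case. First I would note that when $h$ is analytic, for every local \k potential $\phi$ there are constants $A,r>0$ with $|D^\alpha(\partial^2\phi)(z)|\le A^{|\alpha|}\alpha!$ uniformly on a neighborhood, i.e. $\partial^2\log h\in\mathcal C_M$ with $M\equiv 1$. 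The key point is that in the proof of Theorem \ref{main} (which I am allowed to invoke at the level of its internal mechanism, but here I only have its statement), the role of $f(k)$ is to mark the scale below which the Gaussian decay $e^{-ckd(z,w)^2}$ persists; as the Gevrey index $s\downarrow 1$, the threshold $k^{-(s-1)/(2s-1)}$ in \eqref{MainEstimate} tends to $k^0=1$, i.e. to a fixed positive constant, and the exponent $k^{s/(2s-1)}$ in the far regime tends to $k$.

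Thus the strategy I would actually carry out: apply Corollary \ref{Main} with some fixed $s>1$ — say $s=2$ — which is legitimate since any analytic $h$ is in particular in $G^2$. This already gives $|B_k(z,w)|_{h^k}\le Ck^ne^{-ckd(z,w)^2}$ for $d(z,w)\le k^{-1/3}$, and $|B_k(z,w)|_{h^k}\le Ck^ne^{-ck^{2/3}d(z,w)}$ for $d(z,w)\ge k^{-1/3}$. To upgrade the far regime to a clean Gaussian $e^{-ckd(z,w)^2}$ for all $z,w$, I would use that the Bergman function is uniformly bounded, $|B_k(z,z)|_{h^k}\le Ck^n$ (a standard consequence of the $C^\infty$ theory / Theorem \ref{christ}), together with the reproducing and positivity properties of $B_k$: by Cauchy–Schwarz, $|B_k(z,w)|_{h^k}^2\le |B_k(z,z)|_{h^k}\,|B_k(w,w)|_{h^k}\le C^2k^{2n}$, so $|B_k(z,w)|_{h^k}\le Ck^n$ unconditionally. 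Then since $X$ is compact its diameter $D$ is finite, so for $d(z,w)\ge k^{-1/3}$ we have $d(z,w)^2\le D\,d(z,w)$, hence $e^{-ckd(z,w)^2}\ge e^{-ckDd(z,w)}$; conversely on this range $kd(z,w)^2$ is bounded below only by $k^{1/3}$, which is not enough by itself. The honest fix is a two–scale interpolation: use the Gaussian bound from Corollary \ref{Main} on $d(z,w)\le k^{-1/3}$, and on $d(z,w)\ge k^{-1/3}$ compare $e^{-ckd(z,w)^2}$ with $e^{-ck^{2/3}d(z,w)}$ — but on that range $kd(z,w)^2$ versus $k^{2/3}d(z,w)$ has ratio $k^{1/3}d(z,w)\ge 1$, so in fact $e^{-ck^{2/3}d(z,w)}\le e^{-ckd(z,w)^2}$ \emph{when $d(z,w)\le k^{-1/3}$}, which is the wrong range. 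Hence I must repeat the argument for every $s>1$: applying Corollary \ref{Main} for arbitrarily small $s-1>0$ and letting $s\to 1^+$, the threshold $k^{-(s-1)/(2s-1)}$ can be taken $\ge k^{-\eps}$ for every $\eps>0$, and on $d(z,w)\ge k^{-\eps}$ the bound $e^{-ck^{s/(2s-1)}d(z,w)}$ with exponent tending to $k^{1-\eps'}$ must be reconciled with the target. The cleanest route avoiding this is to instead go back into the proof machinery of Theorem \ref{main} specialized to $M\equiv 1$.

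Concretely, I would argue as follows, treating $M\equiv 1$ directly in the scheme behind Theorem \ref{main}: with a constant majorant, $J(N)=M(N)^{1/N}=1$ for all $N$, so $J'\equiv 0$; the defining equation \eqref{solution N} degenerates, but the underlying phenomenon is that the Taylor polynomial of $\partial^2\phi$ of any fixed degree already controls $\phi$ on a \emph{fixed}-size neighborhood (analyticity gives a uniform radius of convergence). This means the "bad scale" never moves with $k$: there exists $\delta>0$, independent of $k$, such that the Gaussian estimate $|B_k(z,w)|_{h^k}\le Ck^ne^{-ckd(z,w)^2}$ holds for $d(z,w)\le\delta$ by the same frozen-coefficient / Hörmander $\bar\partial$ construction of a localized reproducing section that proves Theorem \ref{main}. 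For $d(z,w)\ge\delta$, compactness gives $d(z,w)^2\le D\cdot d(z,w)$ is the wrong direction, but now $kd(z,w)^2\ge k\delta^2$ grows linearly in $k$, and the off-diagonal decay from the $C^\infty$ theory (Theorem \ref{christ}, far regime, or Christ's original exponential-decay result $|B_k(z,w)|_{h^k}\le Ck^ne^{-c\sqrt k\,d(z,w)}$ valid for $d(z,w)\ge\delta$) gives $|B_k(z,w)|_{h^k}\le Ck^ne^{-c\sqrt k\,d(z,w)}\le Ck^ne^{-c\sqrt k\,\delta\cdot d(z,w)}$; comparing with the target requires $\sqrt k\,d(z,w)\ge c' kd(z,w)^2$, i.e. $d(z,w)\le c''/\sqrt k$, which again fails for $d(z,w)\ge\delta$. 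The resolution — and this is the real content — is that in the analytic category one actually has the \emph{genuine} exponential gain $|B_k(z,w)|_{h^k}\le Ck^ne^{-ck d(z,w)}$ for $d(z,w)\ge\delta$ (not merely $\sqrt k$), which beats $e^{-ckd(z,w)^2}$ on the bounded set $\{d(z,w)\le D\}$ since there $d(z,w)\ge\delta\ge d(z,w)^2/D$; this exponential-in-$k$ bound is precisely what the analytic localization in the proof of Theorem \ref{main} delivers (it is the $N(k)\sim$ linear-in-something regime pushing $f(k)\sqrt{k\log k}\gtrsim k$). Patching the two ranges then yields the uniform Gaussian bound. I expect the \textbf{main obstacle} to be exactly this gluing: verifying that the analytic specialization of the Theorem \ref{main} construction really produces exponential decay $e^{-ckd(z,w)^2}$ for \emph{all} $d(z,w)$ — equivalently, that no logarithmic or power loss in $k$ survives when $M\equiv 1$ — which amounts to checking that $f(k)\to\infty$ fast enough (indeed $f(k)\gtrsim\sqrt{k/\log k}$) so that the first branch of \eqref{eq christ} covers the entire diameter of $X$ once $kd(z,w)^2\le f(k)^2\log k$, i.e. once $d(z,w)\le$ const, which holds on all of $X$. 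Once that rate is pinned down, Corollary \ref{Analytic} is immediate from Theorem \ref{christ}.
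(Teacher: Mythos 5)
Your proposal circles several times but ultimately lands on the same two--region gluing strategy as the paper's first proof: establish a near--diagonal Gaussian bound on a \emph{fixed}-size neighborhood $d(z,w)\le\delta$, establish a far--regime bound $|B_k(z,w)|_{h^k}\le Ck^ne^{-ckd(z,w)}$ for $d(z,w)\ge\delta$, and convert the latter to the Gaussian form using compactness of $X$ (i.e.\ $d(z,w)^2\le \mathrm{diam}(X)\,d(z,w)$). You also start down the $s\to 1^+$ limiting path and abandon it as requiring a careful uniformity check --- but that is exactly what the paper's second proof (Subsection~\ref{2ndanalytic}) carries out, tracking the $s$-dependence of $k_0$, of $N(k)$, and of the exponent $b$ to show the limit is legitimate. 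So you identified both of the paper's routes, which is good, but you should not have dismissed the second one.

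There are, however, genuine gaps in your sketch. First, the two inputs on which your final argument rests are never established, and you say as much (``I expect the main obstacle to be exactly this gluing\dots''): you need to know that the Green (or Bergman) kernel obeys the Gaussian bound on $\gamma\sqrt{\log k/k}\le d(z,w)\le\delta$ for a $k$-independent $\delta>0$ in the analytic case, and that feeding $f(k)=k^{1/2}/\sqrt{\log k}$ into the far--diagonal iteration is then legitimate. In the paper these are Theorem~\ref{Estimate of G in a shrinking neighborhood} (analytic case) and Lemmas~\ref{Local Green Kernel}--\ref{Green Kernel Estimate for nearby points}. Second, the mechanism you cite for the near--diagonal bound --- ``frozen-coefficient / H\"ormander $\bar\partial$ construction of a localized reproducing section'' --- is not what the paper uses at all. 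The paper works through the Green kernel of the Hodge--Kodaira Laplacian and invokes analytic hypoellipticity of the Kohn Laplacian (Lemma~\ref{lemma hypoanalyticity}, via Tartakoff--Tr\`eves), combined with a rescaling whose data (lower bounds on $g$ and $\sqrt{-1}\partial\bar\partial\phi$, and analyticity constants) are uniform for $0<r\le 1$. Asserting that ``the analytic localization delivers'' the exponential gain without naming this mechanism is exactly the step you cannot skip, since Theorem~\ref{main} as stated does not apply when $M\equiv 1$ (there $J$ is bounded and the equation~\eqref{solution N} degenerates, as you correctly observe). Finally, a small slip: you write ``$d(z,w)\ge\delta\ge d(z,w)^2/D$,'' but the middle inequality does not follow and is not needed; the correct and sufficient statement is simply $d(z,w)^2\le D\,d(z,w)$ for all $z,w\in X$ with $D=\mathrm{diam}(X)$.
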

Note that for this corollary we cannot directly use Theorem \ref{main} because in the analytic case $J(N)$ is bounded (and vice versa). We will give two proofs for this estimate, where one of them involves taking a uniform limit $s \to 1^+$ in Corollary \ref{Main}. To provide more examples, we can also consider Denjoy \cite{Denjoy} classes of quasi-analytic functions given by 
 $$ M(N)= (\log N)^N, \quad  (\log N)^N ( \log \log N)^N, \quad (\log N)^N ( \log \log N)^N ( \log \log \log N)^N, \quad  \cdots . $$ For instance if $M(N)= (\log N)^N$, from Theorem \ref{main} we get $f(k) \sim k^\frac12 (\log k) ^{-\frac32}$, which by plugging into Theorem \ref{christ} we obtain:
 \begin{cor} Assume $h \in \mathcal C_M$ \footnote{Again, $\phi$ belongs to this particular class if and only if $h=e^{-\phi}$ does.} with $M(N)= (\log N)^N$. Then for all $k \geq 2$ and all $z,w \in X$,
 \begin{equation}
|B_k(z,w)|_{h^k} \leq 
\begin{cases}
C k^{n} e^{-c \, k d(z,w)^2}, \quad & d(z,w)\leq \frac{1}{\log k},\\
C k^{n} e^{-\, \frac{ck}{\log k} \,\, d(z,w)}, \quad & d(z,w) \geq \frac{1}{\log k}.
\end{cases}
\end{equation}
\end{cor}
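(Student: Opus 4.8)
The plan is to derive this corollary directly from Theorem~\ref{main} by specializing $M(N)=(\log N)^N$, exactly as the preceding two corollaries were obtained. First I would verify the hypotheses of Theorem~\ref{main}: since $\partial^2\log h \in \mathcal C_M$ is assumed, it remains to check that $\log M(N) = N \log\log N$ is strictly convex on $\mathbb R_{>0}$ (or at least on a half-line $[N_0,\infty)$, which suffices since we only care about large $k$), and that $J(N) = M(N)^{1/N} = \log N$ is unbounded — both are immediate.

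Next I would estimate the solution $N(k)$ of the transcendental equation \eqref{solution N}. With $J(N)=\log N$ we have $J'(N) = 1/N$, so $J'(N)/J(N) = 1/(N\log N)$ and $NJ'(N)/J(N) = 1/\log N \to 0$; hence the exponential factor $e^{2NJ'(N)/J(N)} = e^{2/\log N}$ tends to $1$. The left-hand side of \eqref{solution N} becomes
\begin{equation*}
N^2 \cdot (\log N) \cdot \frac{1}{N} \cdot e^{2/\log N} = N (\log N)\, e^{2/\log N} \sim N\log N,
\end{equation*}
so the equation reads $N\log N \asymp k$, giving $N(k) \sim k/\log k$. More precisely $\log N(k) = \log k - \log\log k + o(1) \sim \log k$, so that $N(k) = k/\log N(k) \cdot(1+o(1)) \sim k/\log k$; the $e^{2/\log N}$ correction only affects a $(1+o(1))$ factor and is harmless. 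Then I would plug this into \eqref{growth f}:
\begin{equation*}
f(k) = \frac{N(k)\sqrt{J'(N(k))/J(N(k))}}{\sqrt{\log k}} = \frac{N(k)}{\sqrt{\log k}}\cdot\frac{1}{\sqrt{N(k)\log N(k)}} = \frac{1}{\sqrt{\log k}}\sqrt{\frac{N(k)}{\log N(k)}}.
\end{equation*}
Since $N(k)/\log N(k) \sim (k/\log k)/\log k = k/(\log k)^2$, this yields $f(k) \sim \frac{1}{\sqrt{\log k}}\cdot\frac{\sqrt{k}}{\log k} = \sqrt{k}\,(\log k)^{-3/2}$, matching the claim $f(k)\sim k^{1/2}(\log k)^{-3/2}$ stated in the excerpt.

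Finally I would substitute this $f(k)$ into the two regimes of Theorem~\ref{christ}. For the threshold: $f(k)\sqrt{(\log k)/k} \sim \sqrt{k}(\log k)^{-3/2}\cdot\sqrt{\log k}/\sqrt{k} = 1/\log k$, which is the stated crossover distance. For the exponent in the far regime: $f(k)\sqrt{k\log k} \sim \sqrt{k}(\log k)^{-3/2}\cdot\sqrt{k}\sqrt{\log k} = k/\log k$, giving the decay $e^{-ck\,d(z,w)/\log k}$. Adjusting the constants $c,C$ to absorb the $(1+o(1))$ factors and to cover the finitely many small $k$ (where Theorem~\ref{main} sets $f(k)=f(k_0)$, and where in any case the trivial bound $|B_k|_{h^k}\le Ck^n$ combined with boundedness of $d$ handles things after shrinking $c$) gives the result for all $k\ge 2$.

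The main obstacle — really the only point requiring care — is making the asymptotic analysis of the implicitly defined $N(k)$ rigorous enough to pin down $f(k)$ up to a constant factor, since the equation \eqref{solution N} cannot be solved in closed form. The cleanest route is to establish two-sided bounds $c_1 k/\log k \le N(k) \le c_2 k/\log k$ directly from monotonicity of the left-hand side of \eqref{solution N} (guaranteed by the convexity hypothesis, which makes the LHS strictly increasing) by plugging in trial values $N = \epsilon k/\log k$ and $N = k/\log k$ and checking the inequalities, then feeding these bounds through \eqref{growth f}; one never needs a precise asymptotic expansion, only that $f(k)$ is trapped between two constant multiples of $\sqrt k(\log k)^{-3/2}$, which is exactly what Theorem~\ref{christ} can absorb into its constants $c$ and $C$.
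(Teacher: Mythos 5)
Your proposal is correct and takes the same route as the paper, which simply specializes $M(N)=(\log N)^N$ in Theorem~\ref{main} to obtain $f(k)\sim k^{1/2}(\log k)^{-3/2}$ and substitutes into Theorem~\ref{christ}. The paper omits the intermediate computations that you supply; your derivation of $J(N)=\log N$, the asymptotic $N(k)\sim k/\log k$ from the implicit equation~\eqref{solution N}, and the observation that convexity of $N\log\log N$ need only hold on a half-line are all sound and consistent with how the paper handles its own $x_0$ threshold.
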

We note that in Theorem \ref{christ}, the maximum rate of growth of the function $f(k)$ is
$\frac{k^\frac12}{\sqrt{\log k}}$, which happens when the metric $h$ is analytic, but there is no minimum rate of growth for $f(k)$ as it can be arbitrary small by choosing the majorant $M$ in Theorem \ref{main} to have arbitrary large growth rate. Also, as we see all of the above estimates are much better than the following Agmon type estimates (see \cite{Bern, Ch1, Del, Lindholm, MaMaAgmon}) valid for all smooth metrics $h$ 
\begin{equation}\label{Agmon} \left | B_k(z,w) \right |_{h^k}  \leq C k^n e^{- c \sqrt{k} d(z,w)}.\end{equation}

We must emphasize that  on the diagonal and in certain shrinking neighborhoods of the diagonal we have more precise information on the Bergman kernel. Zelditch \cite{Ze1} and Catlin \cite{Ca} proved if $h$ is $C^\infty$ then on the diagonal $z=w$, the Bergman kernel accepts a complete asymptotic expansion of the form
\begin{equation}\label{ZC}
|B_k(z,z) |_{h^k}= K_k(z,z)e^{-k\phi(z)}\sim\frac{k^n}{\pi^n}\left(b_0(z,\bar{z})+\frac{b_1(z,\bar{z})}{k}+\frac{b_2(z,\bar{z})}{k^2}+\cdots\right).
\end{equation}
\textit{Very near the diagonal}, i.e. in a $\frac{1}{\sqrt{k}}$-neighborhood of the diagonal, one has a scaling asymptotic expansion for the Bergman kernel (see \cite{ShZe, MaMaBook, MaMaOff, LuSh, HKSX}).  In fact given any $\gamma >0$, for $d(z, w) \leq \gamma \sqrt{\frac{\log k}{{k}}}$ one has (see \cite{ShZe})
\begin{equation} \label{Shrinking} 
\left | B_k(z, w) \right |_{h^k} = \frac{k^n}{\pi^n} e^{- \frac{k D(z, w)}{2}} \left (1 + O\left(\frac{1}{k}\right) \right ). 
\end{equation}
Here $D(z, w)$ is Calabi's diastasis function \cite{Cal} which is controlled from above and below by $d^2(z, w)$ and defined by
\begin{equation}\label{Diastasis}
D(z, w)= \phi(z) + \phi(w) - \psi(z, \bar w) - \psi(w, \bar z),
\end{equation}	
where $\psi(z,\bar{w})$ is an (almost) holomorphic extension of the \k potential $\phi(z)$.  Furthermore, we expect this asymptotic property to hold for $\gamma=f(k)$:
\begin{conj} Assume $\partial ^2 \log h \in \mathcal C_M$ where $M$ satisfies the conditions of Theorem \ref{main} and $f(k)$ is given by \eqref{growth f}. Then the asymptotic \eqref{Shrinking} holds for $d(z,w) \leq f(k) \sqrt{\frac{\log k}{k}}$. In the analytic case \eqref{Shrinking} holds for $d(z,w) <\delta$ for some $\delta >0$ independent of $k$\footnote{In fact, in the analytic ase, this is a conjecture of Zelditch \cite{Zeemail}.}. 
\end{conj}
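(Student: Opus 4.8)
The plan is to promote the one-sided bound of Theorem~\ref{main} to the two-sided asymptotic \eqref{Shrinking} by constructing, near each base point, an explicit local parametrix for $B_k$ that is carried to an order $N=N(k)$ growing with $k$ according to the same optimization that yields \eqref{solution N}, and then controlling the remainder of this parametrix throughout the enlarged window $d(z,w)\le f(k)\sqrt{\log k/k}$. Two essentially equivalent implementations are available: a H\"ormander $\bdel$-correction of a peak-section ansatz, as in Shiffman--Zelditch \cite{ShZe} and in the Berman--Berndtsson--Sj\"ostrand approach, or a Boutet de Monvel--Sj\"ostrand type analytic (resp.\ Gevrey-class) parametrix for the Szeg\H{o} kernel on the unit circle bundle; I would use the former for the $\mathcal C_M$ statement and the latter --- or a uniform $s\to 1^+$ limit of the former --- for the analytic case.

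\textbf{Localization.} For $z,w$ with $d(z,w)\le f(k)\sqrt{\log k/k}$ both points lie in a fixed trivializing chart $(U,e_L)$ about some $p$. Using the off-diagonal decay of Theorem~\ref{christ} (or even the cruder Agmon bound \eqref{Agmon}) together with the extremal characterization of the Bergman kernel, one reduces the computation of $B_k(z,w)$ to a model problem on $U$ with a cutoff and a H\"ormander weight, at the cost of an error that is $O(k^{-\infty})$ relative to the expected main term $k^n e^{-kD(z,w)/2}$. The point needing care is that the discarded ``tail'' carries a factor $e^{-ck\,d(z,w)^2}$ against that main term, and this is genuinely negligible only because $k\,d(z,w)^2\le c\,f(k)^2\log k\to\infty$ in our regime; one also uses the standard two-sided comparison $c\,d(z,w)^2\le D(z,w)\le C\,d(z,w)^2$ for the diastasis \eqref{Diastasis}.

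\textbf{Parametrix and coefficient bounds.} In Bochner (i.e.\ $K$-) coordinates centered at $p$ one writes $\phi(z)=|z|^2+G(z,\bar z)$ with $G$ vanishing to third order and, by the $\mathcal C_M$ hypothesis, $|D^\alpha G(z)|\le A^{|\alpha|}\alpha!\,M(|\alpha|)$ on $U$. Polarizing produces a phase $\psi(z,\bar w)=z\cdot\bar w+\widetilde G(z,\bar w)$, holomorphic in $z$ (genuinely so in the analytic case, Gevrey-controlled otherwise), with $D(z,w)=\phi(z)+\phi(w)-\psi(z,\bar w)-\overline{\psi(z,\bar w)}$, and one forms the WKB ansatz
\[
K_k^{(N)}(z,\bar w)=\frac{k^n}{\pi^n}\,e^{k\psi(z,\bar w)}\sum_{j=0}^{N}k^{-j}\,b_j(z,\bar w),\qquad b_0\equiv 1,
\]
with $b_j$ the universal Bergman coefficients fixed by the transport/reproducing recursion; then $|K_k^{(N)}(z,\bar w)|\,e^{-k\phi(z)/2-k\phi(w)/2}$ equals $\frac{k^n}{\pi^n}e^{-kD(z,w)/2}$ times a truncation of the series in \eqref{ZC}, whose leading term is the right side of \eqref{Shrinking}. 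The analytic heart of the matter is a quantitative bound on $b_j$ in terms of the majorant --- of the rough shape $|b_j(z,\bar w)|\lesssim C^{\,j}\,j!\,M(j)$ on $U$, using that strict convexity of $\log M$ makes $J(N)=M(N)^{1/N}$ increasing and $M$ essentially submultiplicative --- so that the optimal truncation of $\sum_j k^{-j}b_j$, balanced against the $e^{kD(z,w)/2}$ scale, is exactly the saddle-point relation recorded in \eqref{solution N}, fixing $N=N(k)$ and the window \eqref{growth f}.

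\textbf{Error, conclusion, and the main obstacle.} One then estimates $\bdel_z K_k^{(N)}(\cdot,\bar w)$ and the defect in the reproducing identity, both bounded by $k^n e^{-kD(z,w)/2}$ times the truncation error of the series; after the choice $N=N(k)$ this error is $O(\eps_k)$ with $\eps_k\to 0$ (a genuine negative power of $k$ when $h\in G^s$, $s>1$), so a H\"ormander $L^2$-correction yields a true holomorphic section differing from the parametrix by the same order, the reproducing property pins down $B_k$ to that error, and \eqref{Shrinking} follows on reading off the modulus. The main obstacle is precisely this error bound in the regime where $kD(z,w)$ is no longer $O(\log k)$ but grows like a power of $k$: one must show that the remainder after $N$ terms, evaluated at $z$ a distance $\sim\sqrt{D(z,w)}$ from $w$, still beats $e^{+kD(z,w)/2}k^{-n}$ --- a sharp two-sided saddle-point estimate whose one-sided shadow is Theorem~\ref{main}, but which here must be pushed to a genuine asymptotic with explicit constants. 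A secondary difficulty is upgrading the $C^\infty$ almost-holomorphic extension (ambiguous modulo $O(|z-\bar w|^\infty)$) to an honest holomorphic or Gevrey-controlled $\psi$ whose Taylor remainder is effectively small on the enlarged window, and likewise obtaining the $b_j$ bounds as genuine estimates rather than formal ones. For the analytic case the cleanest route is the Boutet de Monvel--Sj\"ostrand analytic parametrix for the Szeg\H{o} kernel --- equivalently, Borel resummation (or optimal truncation) of the above series, which after truncation converges geometrically on a fixed ball --- or else applying the Gevrey-$s$ statement and letting $s\to 1^+$ with constants uniform in $s$, as in the second proof of Corollary~\ref{Analytic}.
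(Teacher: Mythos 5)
The statement you were asked to prove is, in the paper, an unproved \emph{conjecture}; the authors do not supply a proof and explicitly flag it as open, citing only partial progress: for analytic $h$ the asymptotic \eqref{Shrinking} is known in the range $d(z,w)\lesssim k^{-1/4}$ (\cite{HLXanalytic}), and for $h\in G^s$, $s>1$, in the range $d(z,w)\lesssim k^{-1/2+1/(4s+4\eps)}$ (\cite{HLXgevrey}) --- both strictly smaller than the conjectured window $f(k)\sqrt{\log k/k}$. So there is no paper proof to compare against, and your text should be read as a research programme rather than a completed argument.

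Within that framing, your sketch does correctly identify the approach taken in \cite{HLXanalytic,HLXgevrey}: Bochner coordinates, an (almost-)holomorphic polarization $\psi(z,\bar w)$ of $\phi$, a truncated WKB parametrix $k^n e^{k\psi}\sum_{j\le N} b_j k^{-j}$, quantitative bounds on the $b_j$ in terms of the majorant $M$, an optimal truncation $N=N(k)$ matched to the scale $e^{kD/2}$, and a H\"ormander $\bdel$-correction. You are also candid, and correct, about where the argument is incomplete: in the enlarged window one has $kD(z,w)\sim f(k)^2\log k$, which for $s>1$ grows like a power of $k$, so the error after $N(k)$ terms must beat $e^{+kD/2}k^{-n}$ with a genuine two-sided bound and explicit constants. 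That is precisely the estimate the existing literature has not achieved beyond the smaller ranges cited above, and your proposal does not supply it either. Two further points deserve emphasis beyond what you wrote. First, the coefficient bound $|b_j|\lesssim C^j j!\,M(j)$ is asserted by analogy but is nontrivial: the $b_j$ are produced by a recursion involving $j$-fold differentiations of $\phi$ and of the inverse metric, and propagating the $\mathcal C_M$ bound through that recursion (including Fa\`a di Bruno-type compositions and inversion) requires $M$ to satisfy a suitable algebra/stability property beyond strict log-convexity; this needs to be proved, not assumed. Second, in the non-analytic case $\psi(z,\bar w)$ is only an almost-holomorphic extension, so $\bdel_z\psi$ does not vanish but is merely $O(|z-\bar w|^\infty)$ in a Gevrey-controlled sense; on the window $|z-w|\sim f(k)\sqrt{\log k/k}$ one must check that this defect, multiplied by $k$ and fed through the $\bdel$-solver, stays below the main term. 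Absent these, what you have is a plausible and well-motivated outline of how one might try to prove the conjecture --- consistent with the fact that the paper leaves it open --- not a proof.
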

Related to this conjecture, in \cite{HLXanalytic} it is shown that if $h$ is analytic then \eqref{Shrinking} holds for $d(z,w) \leq C k^{-\frac14}$. Under the weaker assumption $h \in G^s$, for $s > 1$, it is proved in \cite{HLXgevrey} that \eqref{Shrinking} is valid for $d(z,w) \leq C_\eps k^{-\frac{1}{2}+\frac{1}{4s+4\eps}}$ for any $\eps>0$. 

\subsection{Organization of the paper} 
Our work is greatly inspired by \cite{Ch2}. To obtain a quantitative version of the result of \cite{Ch2}, we follow their proof thoroughly and keep track of all constants involved. We also fill in some details of \cite{Ch2} such as the bootstrapping arguments and the CR structure involved in the analytic hypoellipticity lemma for the Kohn Laplacian. As in \cite{Ch2}, we study the Bergman kernel in three regions, namely $$d(z, w) \leq \gamma \sqrt{\tfrac{\log k}{k}}, \quad \gamma \sqrt{\tfrac{\log k}{k}} \leq d(z, w) \leq f(k) \sqrt{\tfrac{\log k}{k}}, \quad \text{and} \quad d(z, w) \geq f(k) \sqrt{\tfrac{\log k}{k}}.$$ As we discussed above in fact in the region $d(z, w) \leq \gamma \sqrt{\tfrac{\log k}{k}}$ we have an asymptotic expansion for the Bergman kernel (see \cite{ShZe}). For completeness,  in Subsection \ref{verynear} we will provide a proof for this using \cite{BBS}.  In the regions $$ \gamma \sqrt{\tfrac{\log k}{k}} \leq d(z, w) \leq f(k) \sqrt{\tfrac{\log k}{k}}, \quad \text{and} \quad d(z, w) \geq f(k) \sqrt{\tfrac{\log k}{k}}, $$ which we will call \textit{near diagonal} and \textit{far from diagonal} respectively,  we study the Bergman kernel via studying the Green kernel of the Hodge-Kodaira Laplacian as in \cite{Ch2}. In fact far from diagonal estimates follow from near diagonal estimates by an iterative argument involving Neumann series exactly as it is done in \cite{Ch2}. One of the main ingredients in the proof of near diagonal estimates is our Lemma \ref{improved estimates}, which is new relative to \cite{Ch2}, and is the counterpart of the analytic hypoellipticity Lemma \ref{lemma hypoanalyticity} when the condition of analyticity of $h$ is replaced by $\partial^2 \log h$ belonging to the class $\mathcal C_M$. The function $f(k)$, which is responsible for the rate of the decay of the Green kernel (or the Bergman kernel) is obtained in this step and it is extracted from an interesting optimization problem where the convexity of $\log (M)$ plays a key role.  

Section \ref{Background}, provides background on the relation of the Bergman and Green kernels, a priori elliptic estimates, analytic hypoellipticity of the Kohn Laplacian and its underlying CR structure. Section \ref{near} involves proving the near diagonal estimates. The optimization problem for $f(k)$ is studied in Subsection \ref{f(k)}.  Subsection \ref{verynear}, gives a proof of the \textit{very near diagonal} asymptotic expansion \ref{Shrinking}. Finally, Section \ref{far} gives a proof for far from diagonal estimates using the near diagonal estimates.


\section{Background Materials}\label{Background}

\subsection{The relation between the Bergman kernel and the Green Kernel}
In this section we will introduce the Green kernel associated to the line bundle $L^k$ and explain its relation with the Bergman kernel.
 
Let 
\begin{equation*}
	\dbar: \Gamma(X, \Lambda^{0,q}(L^k))\rightarrow \Gamma(X,\Lambda^{0,q+1}(L^k))
\end{equation*}
be the usual Dolbeault operator. Denote by $\dstar$ the formal adjoint, with respect to the $L^2$ inner product induced by the Hermitian metric $h$ and \k metric $\omega$. Then we have the Hodge-Kodaira Laplacian 
\begin{equation*}
	\Box_{k}=\dbar\dstar+\dstar\dbar: \Gamma(X,\Lambda^{0,1}(L^k))\rightarrow \Gamma(X,\Lambda^{0,1}(L^k)).
\end{equation*}
Since the line bundle $(L,h)$ is positive, by Weitzenb\"ock formula (see \eqref{Weitzenbock formula}), there exists some positive constant $c>0$ such that
\begin{equation*}
	\left(\Box_{k}f, f\right)\geq ck \|f\|_{L^2}^2, \quad \mbox{ for any } f\in \Gamma(X,\Lambda^{0,1}(L^k)).
\end{equation*} 
By this lower bound and the fact that $\Box_{k}$ is formally self-adjoint, we have the Green operator 
\begin{equation*}
	G=\Box_{k}^{-1}: L^2(X,\Lambda^{0,1}(L^k))\rightarrow L^2(X,\Lambda^{0,1}(L^k)).
\end{equation*}
Let $G_{k}(z,w)$ be the distribution kernel of $G_{k}$, called the Green kernel. As $\Box_{k}$ is formally self-adjoint, we have the Hermitian symmetry
\begin{equation*}
	\oo{G_{k}(z,w)}=G_{k}(w,z).
\end{equation*}

For any section $f\in \Gamma(X,L^k)$, using H\"ormander's $L^2$ estimates, $u=\dstar G_{k}\dbar f$ is the solution of 
\begin{equation*}
	\dbar u=\dbar f,
\end{equation*}
with the minimal $L^2$ norm and $u \perp \ker \dbar$. Therefore, in terms of the Bergman projection $\Pi_k$, we obtain
\begin{equation*}
	\Pi_k f=f-\dstar G_{k}\dbar f.
\end{equation*}
Writing this into integrals, we get
\begin{align*}
	\int_M f(w)B_k(z,w)dV_{w}=f(z)-\dstar\int_M \dbar f(w) G_{k}(z,w)dV_w.
\end{align*} 
By integrating by parts and comparing the kernels, we get
\begin{equation}\label{Bergman and Green Kernel}
	B_{k}(z,w)=\oo{\partial}^*_{k,z} {\partial}^*_{k,w} G_k(z,w), \quad \mbox{ for any } z\neq w\in X.
\end{equation}
Here the subindex $z$ in $\oo{\partial}^*_{k,z}$ and $w$ in ${\partial}^*_{k,w}$ indicate which variable the operators acts on.

\subsection{An equivalent framework without weights}
In this section, we introduce an equivalent framework of the $L^2$ space of $L^k$ valued holomorphic sections. Though this frame only works in a local coordinate chart, it is convenient since the norms are not involved with any $k$-dependent weight.  

Given a local coordinate chart $U$, let $\phi$ be a \k potential. We define the linear map $\mathcal{I}: L^2(U,\Lambda^{0,q}(L^{k}))\rightarrow L^2(U,\Lambda^{0,q})$ by 
\begin{equation*}
u:=\mathcal{I} f={e^{-\tfrac{k}{2}\phi}}f.
\end{equation*}
Note $\int_U |f|^2e^{-k\phi}dV=\int_U |u|^2dV$. $\mathcal{I}$ is therefore a linear isometry, by which we can identify these two $L^2$ spaces. Let us carry out the corresponding operators of $\dbar,\dstar, \Box_{k}$ on $L^2(U,\Lambda^{0,1})$ via this identification. Define $\Dbar: \Gamma(U,\Lambda^{0,q})\rightarrow \Gamma(U,\Lambda^{0,q+1})$ as
\begin{align*}
\Dbar=\mathcal{I}^{-1}\circ \dbar\circ \mathcal{I}=e^{-\tfrac{k}{2}\phi}\,\dbar \,e^{\tfrac{k}{2}\phi}=\dbar+\tfrac{k}{2}\bdel\phi\wedge.
\end{align*}
Then the adjoint operator $\Dstar: \Gamma(U,\Lambda^{0,q+1})\rightarrow \Gamma(U,\Lambda^{0,q})$ becomes
\begin{align*}
\Dstar=\mathcal{I}^{-1}\circ \dstar\circ \mathcal{I}=e^{-\tfrac{k}{2}\phi}\,\dstar \,e^{\tfrac{k}{2}\phi},
\end{align*}
and we can define the Laplace operator $\Delta_{k}: \Gamma(U,\Lambda^{0,1})\rightarrow \Gamma(U,\Lambda^{0,1})$ by
\begin{align*}
\Delta_k=\Dbar\Dstar+\Dstar\Dbar=\mathcal{I}^{-1}\circ \Box_{k}\circ \mathcal{I}=e^{-\tfrac{k}{2}\phi}\,\Box_{k} \,e^{\tfrac{k}{2}\phi}.
\end{align*}
Similarly, the counterpart of the Green kernel $G_{k}(z,w)$ is 
\begin{equation}\label{Green Kernel}
	\mathcal{G}_{k}(z,w)=G_{k}(z,w) e^{-\frac{k}{2}\phi(z)} e^{-\frac{k}{2}\phi(w)},
\end{equation}
which represents a fundamental solution of $\Delta_{k}$ with a pole singularity at $w$. This is a section in $\Gamma(U\times U, \pi_1^*\Lambda^{0,1}\otimes \pi_2^*\Lambda^{0,1})$, where $\pi_1, \pi_2: U\times U\rightarrow U$ are projections to the first and second components respectively. In particular, $\mathcal{G}_{k}(z,w)$ is Hermitian symmetric in $z, w$ and if we take the norm, then
\begin{equation*}
|\mathcal{G}_{k}(z,w)|=|G_{k}(z,w)|_{h^{k}}.
\end{equation*}
Indeed, $L^2(U,\Lambda^{0,1})$ and $L^2(U,\Lambda^{0,1}(L^k))$ are equivalent because of the identification $\mathcal{I}$. We will work in the space $L^2(U,\Lambda^{0,1})$, where there is no weight in the inner product, while all the operators are twisted by $e^{\tfrac{k}{2}\phi}$. Note that we only need $k\geq 1$, which need not to be an integer in this unweighted framework.

\subsection{Elliptic Regularity}
In this section, we will recall some results from the regularity theory of the elliptic systems. For more details, we refer the readers to \cite{GiMa}.

The operators $\Box_{k}$ and $\Delta_{k}$ are both elliptic operators. For simplicity, we only state the results for $\Delta_{k}$ here and similar results also hold for $\Box_{k}$. In a local coordinate chart $U=B(0,1)$, we can write out the operator $\Delta_{k}$. For any $u\in \Gamma(U,\Lambda^{0,1})$,
\begin{equation}\label{laplace in local coordinates}
\left(\Delta_{k} u\right)_{\bar{s}}= -\partial_i\left(g^{i\bar{j}}\partial_{\bar{j}}u_{\bar{s}}\right)+\left(a_{\bar{s}}^{i\bar{j}}k+b_{\bar{s}}^{i\bar{j}}\right) \partial_iu_{\bar{j}}+\left(a_{\bar{s}}^{\bar{i}\bar{j}}k+b_{\bar{s}}^{\bar{i}\bar{j}}\right) \partial_{\bar{i}}u_{\bar{j}}+\left( c^{\bar{i}}_{\bar{s}}k^2+ d^{\bar{i}}_{\bar{s}} k+e^{\bar{i}}_{\bar{s}}\right) u_{\bar{i}},
\end{equation}
where we have used Einstein summation convention, and denoted
\begin{equation*}
	\Delta_{k}u=(\Delta_{k}u)_{\bar{s}}d\oo{z}^s, \quad u=u_{\bar{j}}d\oo{z}^j.
\end{equation*}
All the coefficients $a^{i\bar{j}}_{\bar{s}}, a^{\bar{i}\bar{j}}_{\bar{s}},b^{i\bar{j}}_{\bar{s}}, b^{\bar{i}\bar{j}}_{\bar{s}}, c^{\bar{i}}_{\bar{s}}, d^{\bar{i}}_{\bar{s}}$ and $e^{\bar{i}}_{\bar{s}}$ are polynomials of $g$, ${\phi}$, $g^{-1}$, and their derivatives (up to second order). For simplicity, we denote the above identity as 
\begin{equation*}
\Delta_{k} u= -\partial_i\left(g^{i\bar{j}}\partial_{\bar{j}}u\right)+(ak+b)*\nabla u+( ck^2+dk+e)* u.
\end{equation*}
Here $u$ is identified with the vector $(u_1, u_2,\cdots, u_n)$, $\nabla$ is the gradient operator, and $*$ denotes certain algebraic operations. Though $g^{-1}$, $a,b,c, d, e$ are smooth and bounded uniformly for any $C^m$ norm, $k$ could be arbitrarily large. In order to apply the elliptic estimates, we  need to make all the coefficients bounded uniformly by rescaling. For $R\in (0,1]$, define $\widetilde{g}(z)=g(Rz)$, $\widetilde{a}(z)=Ra(Rz)$, $\widetilde{b}(z)=Rb(Rz)$, $\widetilde{c}(z)=R^2c(Rz)$, $\widetilde{d}(z)=R^2d(Rz)$, $\widetilde{e}(z)=R^2e(Rz)$ and $\widetilde{u}(z)=u(Rz)$. Then  
\begin{equation}\label{rescaled equation}
-\partial_i\left(\widetilde{g}^{^{i\bar{j}}}\partial_{\bar{j}}\widetilde{u}\right)+\left(\widetilde{a}k+\widetilde{b}\right)*\nabla \widetilde{u}+\left(\widetilde{c}k^2+\widetilde{d}k+\widetilde{e}\right)* \widetilde{u}=R^2(\Delta_{k}u)(Rz).
\end{equation}  
When $R\leq \frac{1}{k}\leq 1$, all the coefficients are uniformly bounded for any $C^m$ norm. Let $\eta$ be a smooth cut-off function such that $\supp\eta \in B(0,\frac{3}{4}R)$ and $\eta=1$ in $B(0,\frac{1}{2}R)$. And let $\widetilde{\eta}(z)=\eta(Rz)$. By using the Caccioppoli inequality (see Theorem 4.11 in \cite{GiMa}), for any $m\in \mathbb{N}$, 
\begin{align*}
\|\widetilde{\eta}\widetilde{u}\|_{H^{m+1}(B(0,1))}\leq C\left(\|\widetilde{u}\|_{L^2(B(0,1))}+\|\Delta_{k}\widetilde{u}\|_{H^m(B(0,1))}\right),
\end{align*}
where $C$ only depends on $m$, the lower bound of $g^{-1}$ and the $C^{m+1}$ norm of coefficients $g^{-1},a,b,c,d,e$.
We change $\widetilde{u}$ back to $u$ and obtain the following interior estimates.
\begin{equation}\label{Interior Estimates}
\|u\|_{H^{m+1}(B(0,R/2))}\leq C \left(\tfrac{1}{R^{m+1}}\|u\|_{L^2(B(0,R))}+\tfrac{1}{R^{m-1}}\|\Delta_{k}u\|_{H^m(B(0,R))}\right), \quad \mbox{for any } R\leq \tfrac{1}{k}.
\end{equation}

\subsection{A priori upper bounds of the Green kernel}
In this section, we will prove an a priori upper bound of the Green kernel $\mathcal{G}_{k}$.
\begin{lemma}\label{Apriori Estimates}
	Given any point $p\in X$, there exists a coordinate chart $U$ containing $p$ and constants $C,a>0$, such that whenever $z, w\in U$ and $z\neq w$, we have
	\begin{equation}\label{eq Apriori Estimates}
	\left|\mathcal{G}_k(z,w)\right|+|\nabla_z \mathcal{G}_k(z,w)|+|\nabla^2_z \mathcal{G}_k(z,w)|\leq C\left(k+|z-w|^{-1}\right)^a,
	\end{equation}
	where $\nabla$ denotes the gradient.
\end{lemma}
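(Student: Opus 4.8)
The plan is to derive the a priori pointwise bound \eqref{eq Apriori Estimates} from the interior elliptic estimates \eqref{Interior Estimates} together with Sobolev embedding, exploiting the fact that $\mathcal G_k(\cdot,w)$ solves the homogeneous equation $\Delta_k \mathcal G_k(\cdot,w) = 0$ away from the pole $z=w$, and has only a mild (polynomial) singularity at the pole coming from $\Delta_k$ being a (twisted) second-order elliptic operator in real dimension $2n$.

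First I would fix $p$, choose the coordinate chart $U = B(0,1)$ in which \eqref{laplace in local coordinates} and the rescaling discussion of the previous subsection are valid, and shrink $U$ so that $p$ has a definite ball around it inside $U$. The starting point is the $L^2$ bound on $\mathcal G_k$ itself: from the Weitzenb\"ock lower bound $\Box_k \geq ck$ one has $\|G_k\|_{L^2\to L^2} = \|\Box_k^{-1}\| \leq \frac{1}{ck}$, and unwinding the isometry $\mathcal I$ this gives a bound $\|\mathcal G_k(\cdot,w)\|_{L^2(U)} \leq C k^{-1}$ for each fixed $w$ (more precisely a bound on $\iint |\mathcal G_k(z,w)|^2\, dV_z\, dV_w$; one then uses the Hermitian symmetry and a standard argument to get the pointwise-in-$w$ statement, or simply absorbs $w$ into the final supremum). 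This is the global input; everything else is local elliptic regularity.

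Next, for $z_0 \in U$ with $r := |z_0 - w| > 0$, apply the rescaled interior estimate \eqref{Interior Estimates} on balls centered at $z_0$. Two cases: if $r \leq 1/k$, work on $B(z_0, r/2)$, which avoids the pole, and take $R = \min(r, 1/k)$-type radii; if $r \geq 1/k$, the constraint $R \leq 1/k$ in \eqref{Interior Estimates} is the binding one and we take $R \sim 1/k$, again staying away from the pole since $r \geq 1/k \geq R$. In either case $\Delta_k \mathcal G_k = 0$ on the relevant ball, so \eqref{Interior Estimates} with $m$ large enough that $H^{m+1}(B(0,1)) \hookrightarrow C^2$ (i.e. $m+1 > n + 2$, recalling $\dim_{\mathbb R} X = 2n$) yields
\begin{equation*}
\|\mathcal G_k(\cdot,w)\|_{C^2(B(z_0, R/4))} \leq C R^{-(m+1)} \|\mathcal G_k(\cdot,w)\|_{L^2(B(z_0,R))} \leq C R^{-(m+1)} k^{-1}.
\end{equation*}
Since $R^{-1} \leq \max(r^{-1}, k) \leq k + |z_0-w|^{-1}$, evaluating at the center $z_0$ gives $|\mathcal G_k| + |\nabla_z \mathcal G_k| + |\nabla^2_z \mathcal G_k| \leq C(k + |z_0-w|^{-1})^{m+1}$, which is \eqref{eq Apriori Estimates} with $a = m+1$.

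The main obstacle is the bookkeeping around the pole and the rescaling: one must check that the cutoff $\eta$ in \eqref{Interior Estimates} can be chosen supported strictly away from $w$ in all parameter regimes (this is why the two cases $r \lessgtr 1/k$ are separated), that the rescaling $z \mapsto Rz$ used to derive \eqref{Interior Estimates} commutes correctly with translating the center to $z_0$ (it does, after an innocuous affine change of coordinates, since the coefficient bounds in \eqref{laplace in local coordinates} are uniform over $U$), and that the $L^2$ norm of $\mathcal G_k(\cdot, w)$ over a small ball is controlled — here one can simply use the global $L^2\to L^2$ bound on $G$ since small balls have bounded volume. A minor additional point is that $\Delta_k$ is an elliptic \emph{system} (acting on $(0,1)$-forms, i.e. on $\mathbb C^n$-valued functions), so one invokes the Caccioppoli/elliptic estimate for systems as already quoted from \cite{GiMa}; no new ideas are needed beyond what \eqref{Interior Estimates} packages. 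The value of the exponent $a$ is not optimized and plays no role later, so I would not track it carefully.
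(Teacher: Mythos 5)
Your overall strategy is the same as the paper's (Weitzenb\"ock lower bound for $\Box_k$ plus the interior elliptic estimates \eqref{Interior Estimates} and Sobolev embedding, two cases $r\lessgtr 1/k$, bootstrapping to $C^2$), but there is a genuine gap at the starting point. You assert that the operator bound $\|G_k\|_{L^2\to L^2}\le C/k$ ``unwinds'' to $\|\mathcal G_k(\cdot,w)\|_{L^2(U)}\le Ck^{-1}$ for fixed $w$, or at least to a Hilbert--Schmidt bound $\iint|\mathcal G_k|^2\le Ck^{-2}$. Neither implication holds: an $L^2\to L^2$ operator bound does not control the Hilbert--Schmidt norm of the kernel, and in fact the Hilbert--Schmidt norm is infinite here, because $\Delta_k$ is a second-order elliptic operator in real dimension $2n$, so $\mathcal G_k(z,w)$ has a singularity of order $|z-w|^{2-2n}$ at the pole, which fails to be locally square-integrable in $z$ for $n\ge 2$. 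Thus the quantity $\|\mathcal G_k(\cdot,w)\|_{L^2(U)}$ you feed into \eqref{Interior Estimates} is not finite, let alone $O(k^{-1})$.

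The paper avoids exactly this by not trying to bound the kernel in $L^2$ directly, but instead running a duality argument in two passes. Pass one: take any $u\in L^2$ supported in $B(w,\tfrac14 d(z,w))$, note $\Delta_k(\mathcal G_k u)=u$ vanishes on $B(z,\tfrac14 d(z,w))$, apply \eqref{Interior Estimates} on $B(z,R)$ with $R=\min\{1/k,\tfrac14 d(z,w)\}$ and Sobolev embedding, and use the operator bound $\|\mathcal G_k u\|_{L^2}\le\tfrac{2}{k}\|u\|_{L^2}$ to conclude $|\mathcal G_k u(z)|\le C(k+|z-w|^{-1})^{n+1}\|u\|_{L^2}$. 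Taking the supremum over $u$ gives, by duality, $\|\mathcal G_k(z,\cdot)\|_{L^2(B(w,\tfrac14 d(z,w)))}\le C(k+|z-w|^{-1})^{n+1}$ --- an $L^2$ bound over a ball that stays away from the singularity, which is what you actually need. Pass two: since $\Delta_k\mathcal G_k(z,\cdot)=0$ on $B(w,\tfrac14 d(z,w))$, apply \eqref{Interior Estimates} and Sobolev embedding again, now in the second variable, to promote this to the pointwise $C^2$ bound. Hermitian symmetry then gives the derivatives in $z$ as well. If you replace your false ``global $L^2$ input'' with this duality step, the rest of your argument (the case split on $r\lessgtr 1/k$, the choice of $m$ to ensure $H^{m+1}\hookrightarrow C^2$, and the observation $R^{-1}\le k+|z-w|^{-1}$) is fine and matches the paper.
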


This lemma is from \cite{Ch2} and here we fill in the details of the proof. Admittedly, sharper upper bounds on $\mathcal{G}_{k}(z,w)$ can be proved but this one is sufficient for our purpose. The proof is divided into two steps. First, we can rather easily obtain certain upper bound on the operator $\mathcal{G}_{k}$, whose distribution kernel is $\mathcal{G}_{k}(\cdot, \cdot)$. Second,  the operator bound can be improved to the pointwise bound in \eqref{eq Apriori Estimates} by the interior estimates \eqref{Interior Estimates}. This is a standard bootstrapping argument and will be used again in later sections. 
       
\begin{proof}
For $\Box_{k}: L^2(X,\Lambda^{0,1}(L^{k}))\rightarrow  L^2(X,\Lambda^{0,1}(L^{k}))$, Weitzenb\"ock formula tells that
\begin{equation}\label{Weitzenbock formula}
\Box_{k}=\oo{\nabla}^*\oo{\nabla}+\Ric TM+k\Ric L,
\end{equation}
where $\nabla$ is the Chern connection on the bundle $L^{k}$ coupled with the Levi-Civita connection on $\Lambda^{0,1}$. When $k$ is sufficiently large, $\Box_{k}$ is bounded below by $\tfrac{k}{2}$, whence $G_{k}$ has the operator bound $\|G_{k}\|_{L^2\rightarrow L^2}\leq \frac{2}{k}$. Let $V$ be a coordinate chart containing $p$. For simplicity, we can assume $V=B(0,2)$. Take $U=B(0,1)$ and for any $z,w\in U$, by compositing with inclusion and restriction maps, we can regard $G_{k}$ as a linear operator
\begin{equation*}
	G_{k}: L^2(B(w,\tfrac{1}{4}d(z,w)),\Lambda^{0,1}(L^{k}))
	\rightarrow L^2(B(z,\tfrac{1}{4}d(z,w)),\Lambda^{0,1}(L^{k})),
\end{equation*} 
with the operator bound $\|G_{k}\|\leq \tfrac{2}{k}$.  If we let
\begin{equation*}
	\mathcal{G}_{k}:L^2(B(w,\tfrac{1}{4}d(z,w)),\Lambda^{0,1})\rightarrow L^2(B(w,\tfrac{1}{4}d(z,w)),\Lambda^{0,1})
\end{equation*} 
be the linear operator with distribution kernel $\mathcal{G}_{k}(\cdot, \cdot)$, then $\mathcal{G}_{k}$ shares the same operator bound $\|\mathcal{G}_{k}\|\leq \tfrac{2}{k}$ by \eqref{Green Kernel}. That is, for any $u\in L^2(B(w,\frac{1}{4}d(z,w)),\Lambda^{0,1})$ with $\supp u\subseteq B(w,\frac{1}{4}d(z,w))$, we have
	\begin{equation*}
		\|\mathcal{G}_{k}u\|_{L^2(B(z,\frac{1}{4}d(z,w)))}
		\leq \tfrac{2}{k} \|u\|_{L^2(B(w,\frac{1}{4}d(z,w)))}.
	\end{equation*}
Since $\Delta_{k}\mathcal{G}_{k}u=u$, which vanishes in $B(z,\tfrac{1}{4}d(z,w))$, if we apply the elliptic estimates \eqref{Interior Estimates} to $\mathcal{G}_{k}u$ on $B(z,R)$ with $R=\min\{\tfrac{1}{k},\tfrac{1}{4}d(z,w)\}$, then for any $m\in \mathbb{N}$, 
	\begin{equation*}
	\|\eta \mathcal{G}_{k}u\|_{H^m(B(z,R))}\leq CR^{-m}\|\mathcal{G}_{k}u\|_{L^2(B(z,R))}
	\leq C\left(k+|z-w|^{-1}\right)^m \|u\|_{L^2(B(w,\frac{1}{4}d(z,w)))},
	\end{equation*} 
	where $C$ is a constant depending only on $C^{m+3}$ norm of the \k potential $\phi$ and the lower bound of $\sqrt{-1}\del\bdel\phi$. Taking $m=n+1$ and applying the Sobolev embedding theorem,
	\begin{equation*}
	|\mathcal{G}_{k}u(z)|
	\leq 
	C\left(k+|z-w|^{-1}\right)^{n+1} \|u\|_{L^2(B(w,\frac{1}{4}d(z,w))}.
	\end{equation*}  
	Therefore,
	\begin{equation*}
	\|\mathcal{G}_{k}(z,\cdot)\|_{L^2(B(w,\frac{1}{4}d(z,w)))}
	=\sup_{u\in L^2(B(w,\frac{1}{4}d(z,w))} \frac{\left|\int u(\zeta)\mathcal{G}_{k}(z,\zeta)d\zeta\right|}{\|u\|_{L^2(B(w,\frac{1}{4}d(z,w))}}\leq C\left(k+|z-w|^{-1}\right)^{n+1}.
	\end{equation*}
	Since for any $\zeta\in B(w,\frac{1}{4}d(z,w))$,
	\begin{equation*}
	\Delta_{k}\mathcal{G}_{k}(z,\zeta)=0,
	\end{equation*}
	when $\Delta_{k}$ acts on the second component, again by applying the elliptic estimates \eqref{Interior Estimates} to $\mathcal{G}_{k}(z,\cdot)$ on $B(w,R)$ with $R=\min\{\tfrac{1}{k},\tfrac{1}{4}d(z,w)\}$,  and using the Sobolev embedding theorem, we have
	\begin{equation*}
	|\mathcal{G}_{k}(z,w)|_{C^2}
	\leq 
	C\left(k+|z-w|^{-1}\right)^{n+3}
	\|\mathcal{G}_{k}(z,\cdot)\|_{L^2(B(w,\frac{1}{4}d(z,w)))}
	\leq 
	C\left(k+|z-w|^{-1}\right)^{2n+4},
	\end{equation*}
	where the pointwise $C^2$ norm is taken on the second component $w$. Note the constant $C$ only depends on the dimension $n$ and the $C^{n+6}$ norm of \k potential $\phi$ and the lower bound of $\sqrt{-1}\del\bdel\phi$. Hence, we can make $C$ uniform for all $z,w\in U$. The result follows by the symmetry $\mathcal{G}(z,w)=\oo{\mathcal{G}(w,z)}$ and taking $a=2n+4$. 
\end{proof}
 
\subsection{Analytic hypoellipticity of the Kohn Laplacian} 
In this section, we work in the unweighted framework introduced above. Let $B\subseteq \mathbb{C}^n$ be an open ball and let $\widetilde{B}\Subset B$ be a relatively compact subset. When metrics are real analytic, M. Christ has proved the following estimates in \cite{Ch2}(see Lemma 7) by using the analytic hypoellipticity of the Kohn Laplacian \cite{Ta2, Tr}. 
\begin{lemma}\label{lemma hypoanalyticity}
	Let the ball $B\subseteq \mathbb{C}^n$ be equipped with a real analytic Hermitian metric $g$. Let $L$ be a holomorphic line bundle over $B$, equipped with a positive real analytic Hermitian metric $h$. For any relatively compact $\widetilde{B}\Subset B$, there exist positive constant $C$ and $b$, such that for any solution $u\in \Gamma(B,\Lambda^{0,q})$ $(0< q < n)$ of $\Delta_{k}u=0$ on $B$, 
	\begin{equation}\label{hypoanalyticity}
	\|u\|_{L^{\infty}(\widetilde{B})}\leq Ce^{-bk}\|u\|_{L^{\infty}(B)}
	\end{equation}  
\end{lemma}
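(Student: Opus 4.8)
The plan is to reduce the $L^\infty$ estimate \eqref{hypoanalyticity} to a statement about the Bergman--Szeg\H{o} kernel on the boundary of the disk bundle associated to $(L^*,h^*)$, where analytic hypoellipticity of the Kohn Laplacian can be invoked. First I would set up the \emph{circle bundle} construction: let $D \subset L^*$ be the unit disk bundle with respect to the dual metric $h^*$, with smooth strictly pseudoconvex boundary $\partial D = \{ \xi \in L^* : |\xi|_{h^*} = 1\}$ (strict pseudoconvexity is exactly positivity of $h$). Since $g$ and $h$ are real analytic, $\partial D$ inherits a real analytic CR structure. Sections of $\Lambda^{0,q}(L^k)$ over $B$ correspond, via the $S^1$-action on $\partial D$, to the weight-$k$ Fourier components of $(0,q)$-forms on $\partial D$, and the operator $\Delta_k = \Dbar\Dstar + \Dstar\Dbar$ (or rather its counterpart $\Box_k$ in the weighted picture) matches the restriction of the Kohn Laplacian $\Box_b$ on $\partial D$ to that Fourier mode. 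The point is that a solution $u$ of $\Delta_k u = 0$ on $B$ lifts to a solution $\tilde u$ of $\Box_b \tilde u = 0$ on the piece of $\partial D$ lying over $B$, equivariant of weight $k$.

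Next I would apply analytic hypoellipticity of $\Box_b$ in the form proved by Tartakoff \cite{Ta2} and Tr\`eves \cite{Tr}: for a real analytic strictly pseudoconvex CR manifold, solutions of $\Box_b \tilde u = 0$ are real analytic, and more precisely one has quantitative analytic elliptic estimates of the form $\|\partial^\alpha \tilde u\|_{L^\infty(\widetilde{B}')} \le C^{|\alpha|+1} |\alpha|! \, \|\tilde u\|_{L^2(B')}$ on a slightly shrunk domain. The exponential gain $e^{-bk}$ then comes from combining this real analyticity with the weight-$k$ equivariance. Concretely, restrict $\tilde u$ to an $S^1$-orbit (a circle) over a point; on that circle $\tilde u$ is a weight-$k$ Fourier mode, i.e. proportional to $e^{ik\theta}$, and it extends holomorphically to a complex annulus of fixed width $\rho>0$ determined by the uniform radius of analyticity. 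By the Cauchy estimates / Phragm\'en--Lindel\"of on the annulus, the sup norm of a weight-$k$ mode that is bounded on a band of width $\rho$ is at most $e^{-b k}$ times its size, where $b \sim \rho$. Integrating/taking sup over the base points in $\widetilde B$ converts this into \eqref{hypoanalyticity}. One must take care that the radius of analyticity $\rho$ is uniform over $\widetilde B$, which follows from compactness of $\widetilde B$ and the real analyticity of all the data; the constants $C,b$ then depend only on $g$, $h$, $B$, $\widetilde B$ and not on $k$.

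The main obstacle I anticipate is making the correspondence between $\Delta_k$ on $B$ and $\Box_b$ on $\partial D$ precise enough to transfer estimates, in particular (i) checking that the $S^1$-reduction of $\Box_b$ on weight-$k$ forms really agrees with $\Box_k$ up to lower order terms that do not affect the conclusion, and (ii) identifying the correct CR structure and its volume form so that the $L^\infty \to L^\infty$ estimate is not degraded by factors polynomial in $k$ (which are harmless against $e^{-bk}$, but must be controlled). A secondary technical point is that Tartakoff's and Tr\`eves's theorems are stated for the Kohn Laplacian on functions or on $(0,q)$-forms under the condition $0<q<n$ on the boundary CR manifold (dimension $2n-1$), which is why the hypothesis $0<q<n$ appears in the lemma; I would need to check that the relevant microlocal condition $(Y(q))$ (or rather the hypoellipticity hypothesis used by those authors) holds in this range. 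Since this lemma is quoted from \cite{Ch2}, I expect the proof here to mainly consist of carefully recalling this circle-bundle dictionary and citing \cite{Ta2, Tr} for the analytic hypoellipticity input, with the exponential decay extracted from equivariance as sketched above.
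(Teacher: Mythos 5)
Your proposal follows essentially the same route as the paper: lift the problem to a strictly pseudoconvex real analytic CR manifold where the Kohn Laplacian $\Box_b$ lives, invoke the analytic hypoellipticity theorems of Tartakoff and Tr\`eves to obtain quantitative real analyticity of the lifted solution with constants uniform over $\widetilde{B}$, and then exploit the weight-$k$ equivariance in the fiber direction to extract the exponential gain $e^{-bk}$. The condition $0<q<n$ appears for exactly the reason you identify.

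There are two cosmetic but worthwhile differences. First, the paper does not pass to the unit circle bundle $\partial D\subset L^*$; it instead builds a purely local Heisenberg-type CR structure on $B\times\mathbb R$, namely the boundary of the tube domain $\Omega=\{(z,w): \Im w > \phi(z)/2\}$, with CR vector fields $L_p=\partial_{z_p}+\tfrac{i}{2}\phi_p\,\partial_t$ and a compatible Hermitian metric built from $g$. Over a trivializing chart this is just the universal cover of your $B\times S^1$ circle bundle, so the constructions agree locally; but the $\mathbb{R}$-fiber version avoids any periodicity issue and lets $k\geq 1$ be an arbitrary real parameter, which the paper explicitly uses elsewhere. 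It is also slightly easier to verify the required identity $\Box_b\bigl(e^{ikt}u\bigr)=e^{ikt}\Delta_k u$ for this explicit flat metric than to chase down the lower-order matching terms you flag as your technical worry (i) in the circle-bundle picture. Second, the paper extracts the exponential decay not by Cauchy estimates on a complex annulus but by the equivalent device of differentiating $U(z,t)=u(z)e^{ikt}$ in $t$: analytic hypoellipticity gives $\|D_t^m U\|_{L^\infty(\widetilde B\times\mathbb R)}\leq C^m m!\,\|U\|_{L^\infty(B\times\mathbb R)}$, hence $k^m\|u\|_{L^\infty(\widetilde B)}\leq C^m m!\,\|u\|_{L^\infty(B)}$, and optimizing at $m=[k/C]$ and using Stirling yields $e^{-bk}$ with $b=1/(2C)$. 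That is the same content as your "Cauchy estimate on a strip of width $\rho$," since uniform analyticity constants and the width of the complex extension strip determine each other, but it avoids needing to first pass to the complexification.
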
  

\begin{rmk}\label{uniform hypoanalyticity}
	This lemma does not require the metric $g$ is polarized by the line bundle. We still denote $\phi=-\log h$, which is a \k potential in the polarized case. Positivity	of $h$ means that $\left(\phi_{i\bar{j}}\right)\geq cI$ for some $c>0$. The constant $C$ is a universal numerical constant. The constant $b$ depends on the following data:
	\begin{itemize}
		\item[(a)] the lower bound of $\sqrt{-1}\del\bdel \phi$, i.e.  a positive constant $c$ such that $\left(\phi_{i\bar{j}}\right)\geq cI$ in $B$;
		\item[(b)] the lower bound of $(g_{i\bar{j}})$, i.e.  a positive constant $c$ such that $\left(g_{i\bar{j}}\right)\geq cI$ in $B$;
		\item[(c)] the analyticity constant of $\phi$, i.e. a positive constant $C(\phi)$ such that
		\begin{equation*}
		\left|D_{(z,\bar{z})}^{\alpha}\phi(z) \right|\leq C(\phi)^{|\alpha|+1}\alpha!, \quad \mbox{ for any multi-index } \alpha \mbox{ and } z\in B; 
		\end{equation*}  
		\item[(d)] the analyticity constant of each entry of metric $g$;
		\item[(e)] the distance from $\widetilde{B}$ to $\del B$.
	\end{itemize}
\end{rmk}

For the completeness, we include a proof here for $n\geq 2$. To begin with, we endow $B\times \mathbb{R}$ with a strictly pseudoconvex CR structure (see \cite{ChSh} for more details).
Set the domain 
\begin{equation*}
	\Omega=\{(z,w)\in B\times \mathbb{C}: \Ima w>\tfrac{\phi(z)}{2}\}.
\end{equation*}
Then its boundary is 
\begin{equation*}
	\partial\Omega=\{(z,w)\in B\times \mathbb{C}: \Ima w=\tfrac{\phi(z)}{2}\},
\end{equation*}
and it defining function is $\rho(z,w)=\Ima w-\tfrac{\phi(z)}{2}$. A straightforward calculation shows that if we define
\begin{equation*}
	L_p:=\frac{\partial}{\partial z_p}+i\phi_p\frac{\partial}{\partial w},\quad \mbox{for } 1\leq p\leq n,
\end{equation*} 
then $\{L_p\}$ form a global basis for the space of $T^{1,0}\partial\Omega$. Clearly, 
\begin{equation*}
	[L_p,L_q]=0, \quad \mbox{for any } 1\leq p,q\leq n.
\end{equation*}
Thus $(\partial\Omega, T^{1,0}\partial\Omega)$ is a CR manifold. 

We can identify the boundary $\partial\Omega$ with $B\times \mathbb{R}$ via the diffeomorphism $\pi: \partial\Omega\rightarrow B\times \mathbb{R}$ defined as
\begin{equation*}
	\pi(z,t+i\tfrac{\phi(z)}{2})\rightarrow (z,t).
\end{equation*}
Therefore, a CR structure can be induced on $B\times \mathbb{R}$,  via $\pi$, that is,
\begin{equation*}
	\pi_*L_p=\pi_*\left(\tfrac{\partial}{\partial z_p}+i\phi_p\tfrac{\partial}{\partial w}\right)=\tfrac{\partial}{\partial z_p}+i\tfrac{\phi_p}{2}\tfrac{\partial}{\partial t},\quad \mbox{for } 1\leq p\leq n.
\end{equation*}
We still use $L_p$ for $\pi_*L_p$ for $1\leq p\leq n$ by abusing notations. If we denote $T=\tfrac{\partial}{\partial t}$, then 
\begin{equation*}
[L_p,\oo{L}_q]
=[\tfrac{\partial}{\partial z_p}+i\tfrac{\phi_p}{2}\tfrac{\partial}{\partial t},\tfrac{\partial}{\partial \oo{z}_q}-i\tfrac{\phi_{\bar{q}}}{2}\tfrac{\partial}{\partial t}]
=-i\phi_{p\bar{q}}T.
\end{equation*}
Since $\phi=-\log h$ is plurisubharmonic by the positivity of the Hermitian line bundle $(L,h)$, the CR structure on $B\times \mathbb{R}$ is strictly psedoconvex.

We will further endow the CR manifold $B\times \mathbb{R}$ with a compatible Hermitian structure. Note $L_p,\oo{L}_p$ for $1\leq p\leq n$ and $T$ form a global basis of the tangent bundle. We can directly define an Hermitian metric $h$ such that 
\begin{equation*}
h(L_p, L_q)=g_{p\bar{q}}, \quad 
h(L_p,\oo{L}_q)=h(L_p,T)=h(\oo{L}_p,T)=0,\quad
h(T,T)=1.
\end{equation*}
Set $\omega_p=dz_p$, $\oo{\omega}_p=d\oo{z}_p$ for $1\leq p\leq n$ and $\tau=dt-i\tfrac{\phi_p}{2}dz_p+i\tfrac{\phi_{\bar{p}}}{d\oo{z}_p}$, which form a dual basis to $L_p,\oo{L_p},T$. Then $h$ naturally induces a Hermitian metric on the cotangent bundle such that
\begin{equation}\label{CR metric}
h(\omega_p,\omega_q)=g^{p\bar{q}}.
\end{equation}
The induced volume form is therefore 
\begin{equation}\label{CR volume}
dV_{B\times\mathbb{R}}=\det g\tfrac{\sqrt{-1}}{2}dz_1\wedge d\oo{z}_1\wedge \tfrac{\sqrt{-1}}{2}dz_2\wedge d\oo{z}_2\cdots \wedge \tfrac{\sqrt{-1}}{2}dz_n\wedge d\oo{z}_n\wedge dt=dV_g\wedge dt.
\end{equation}
 
 We are now ready to prove Lemma \ref{lemma hypoanalyticity}. 
 \begin{proof}
 Set $U(z,t)=u(z)e^{ik t}$ as a $(0,q)$ form on $B\times \mathbb{R}$. Let $\dbarb$ be the Cauchy Riemann operator associated to the CR structure on $B\times\mathbb{R}$. Then
 \begin{align*}
 \dbarb U=\sum_{p=1}^{n} d\oo{z}_p\wedge \oo{L}_pU=e^{ik t}\left(\dbar u+\tfrac{k}{2}\dbar\phi\wedge u\right)=e^{ik t}\oo{D}_{k}e^{-ik t}U.
 \end{align*} 
 Since the metric and volume form on $B\times \mathbb{R}$ as shown in \eqref{CR metric} and \eqref{CR volume} are compatible with those of $(B,g)$, the formal adjoint $\dbarb^*$ satisfies
 \begin{equation*}
 \dbarb^*U=e^{ik t}\oo{D}_{k}^*e^{-ik t}U.
 \end{equation*}
 Therefore, 
 \begin{equation*}
 \Box_bU=e^{ik t}\Delta_{k}u=0.
 \end{equation*}
 Note the constructed $CR$ structure on $B\times \mathbb{R}$ is real analytic and strictly pseudoconvex, while the compatible Hermitian metric on $B\times \mathbb{R}$ is also real analytic.  By the analytic hypoellipticity of Kohn Laplacian $\Box_b$ for $n\geq 2$ (\cite{Ta2,Tr}), the solution $U$ is real analytic. What is more, there exists some positive constant $C$, which only depends on the data in Remark \ref{uniform hypoanalyticity}, such that
 \begin{equation*}
 \|D^{\alpha}U(z,t)\|_{L^\infty(\widetilde{B}\times \mathbb{R})}\leq  \|U\|_{L^\infty(B\times \mathbb{R})} C^{|\alpha|}\alpha!, \quad \mbox{for any multi-index } \alpha.
 \end{equation*}   
 In particular, 
 \begin{equation*}
 \|k^mu(z)\|_{L^\infty(\widetilde{B})}=\|D_t^mU\|_{L^\infty(\widetilde{B}\times \mathbb{R})}\leq C^mm!\|u\|_{L^\infty(B)}, \quad \mbox{for } m\geq 0.
 \end{equation*}
 Therefore,
 \begin{equation*}
  \|u\|_{L^\infty(\widetilde{B})}\leq \frac{C^mm!}{k^m}\|u\|_{L^\infty(B)}, \quad \mbox{for } m\geq 0.
 \end{equation*}
 Take $m= [\tfrac{k}{C}]$ and then by Stirling's approximation,
 \begin{equation*}
  \|u\|_{L^\infty(\widetilde{B})}
  \leq \frac{m!}{m^m}\|u\|_{L^\infty(B)}
  \leq e^{-m+1}m^{\tfrac{1}{2}}\|u\|_{L^\infty(B)}
  \leq C_1 e^{-bk}\|u\|_{L^\infty(B)},
 \end{equation*}
 where $C_1=e$ and $b=\frac{1}{2C}$. 
 \end{proof}
 
\section{Estimates of the Green kernel near the diagonal}\label{near}
Given $p\in X$, let $V$ be a coordinate chart containing $p$. Without the loss of generality, we can assume $V=B(p,3)\subseteq \mathbb{C}^n$. 
Recall that $M(x)$ is a $C^2$ function on $\mathbb{R}_{>0}$ such that $\log \left({M(x)}\right)$ is strictly convex. We define the following class of functions, the growth rate of whose Taylor coefficients are controlled by $M$.
\begin{defn}\label{Majorant}
		Suppose $u\in C^{\infty}(V)$. We say the Taylor coefficients of $u$ are majorized by $M$, if there exists some positive constant $A$ such that for any multi-index $\alpha \in \mathbb{Z}_{\geq 0}^{2n}$ with $|\alpha| \neq 0$ and any $z\in V$, 
  		\begin{equation}\label{eq majorant}
  		\left|\frac{{D}^{\alpha}u}{\alpha!}(z)\right|\leq A^{|\alpha|} {M( |\alpha|)}.
  		\end{equation}
		We shall use $\mathcal{C}_M(V)$ to denote the collection of all such smooth functions on $V$. We shall also say a family of functions is uniformly in $\mathcal{C}_M(V)$, if there exists a positive constant $A$ satisfying \eqref{eq majorant} for any function in the family.
\end{defn}

\begin{rmk}
It is not hard to verify that condition \eqref{eq majorant} is invariant under holomorphic coordinates change if we allow $A$ to depend on the coordinate chart and local coordinates. Therefore, we can similarly define the class $\mathcal{C}_M(X)$. 
\end{rmk}
 
Let $\phi$ be a \k potential on $V$ such that all the second order derivatives $\partial^2\phi\in \mathcal{C}_M(V)$.  Set $U=B(p,1)$ and we will prove the following estimates of $\mathcal{G}_{k}(z,w)$ for $z,w$ in a shrinking neighborhood in $U$. We first introduce some notations before stating the result.

Let $J(x)=M(x)^{1/x}$ and assume $J(x)$ is unbounded.  In Lemma \ref{G is going to infinity}, we will prove that $$x^2J(x)J'(x)e^{\frac{2xJ'(x)}{J(x)}}$$ is strictly increasing to infinity starting from some point $x_0>0$ and we let 
\begin{equation}\label{k0} k_0= x_0^2J(x_0)J'(x_0)e^{\frac{2x_0J'(x_0)}{J(x_0)}}. \end{equation}
Then for each integer $k \geq k_0$, the equation 
 \begin{equation*}
 N^2 J(N) J'(N) e^{\frac{2N J'(N)}{J(N)}} =k,
 \end{equation*} 
 has a unique solution $N(k) \in \mathbb{R}_{>0}$ and we define $f(k)$ as
\begin{equation*}
 f(k)=  \frac{N(k) \sqrt{J'(N(k)) /J(N(k))}} {\sqrt{\log k}},
\end{equation*}
and for $k < k_0$ we define $f(k)=f(k_0)$. By the following lemma, whose proof will be given later, we know that:
\begin{lemma}\label{f infinity}
	Then $f(k)$ is strictly increasing to infinity for $k \geq k_0$.
\end{lemma}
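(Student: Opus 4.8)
The plan is to show that $f(k)$ is strictly increasing and tends to infinity by analyzing how $N(k)$ behaves as $k\to\infty$, together with the monotonicity properties of $J$. First I would invoke Lemma \ref{G is going to infinity} (referenced just above the statement), which asserts that the function $\Phi(x):=x^2J(x)J'(x)e^{2xJ'(x)/J(x)}$ is strictly increasing to infinity on $(x_0,\infty)$. Since $\Phi$ is a strictly increasing continuous bijection from $(x_0,\infty)$ onto $(k_0,\infty)$, its inverse $N(k)=\Phi^{-1}(k)$ is well-defined, strictly increasing, and satisfies $N(k)\to\infty$ as $k\to\infty$. This is the backbone of the argument: everything else is assembling the pieces of the formula for $f$.

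The key observation is that the factor $e^{2xJ'(x)/J(x)}$ appearing in $\Phi$ is, up to squaring and the $x^2J(x)J'(x)$ prefactor, essentially the reciprocal of the square of the quantity we want to understand. Indeed, from $N^2 J(N)J'(N)e^{2NJ'(N)/J(N)}=k$ we can solve for the exponential factor, getting $e^{2NJ'(N)/J(N)}=\frac{k}{N^2 J(N)J'(N)}$, hence $\frac{2NJ'(N)}{J(N)}=\log k - \log\!\big(N^2 J(N) J'(N)\big)$. Meanwhile $f(k)^2\log k = N(k)^2\, J'(N(k))/J(N(k))$, so $f(k)^2 = \frac{N^2 J'(N)}{J(N)\log k}$. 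Combining the two displays, I would write $f(k)^2 = \frac{N}{2\log k}\cdot\frac{2NJ'(N)}{J(N)} = \frac{N}{2\log k}\big(\log k - \log(N^2 J(N)J'(N))\big)$. Now since $J(N)\to\infty$ and $J$ is increasing (with $J'>0$), the term $\log(N^2 J(N) J'(N))$ is positive and grows, but one needs it to grow strictly slower than $\log k$; this follows because $\log k = \log\Phi(N) = \log(N^2 J(N) J'(N)) + \frac{2NJ'(N)}{J(N)}$, and the extra summand $\frac{2NJ'(N)}{J(N)}$ is positive, so $\log(N^2 J(N)J'(N)) < \log k$ automatically, giving $f(k)^2 = \frac{N}{2\log k}\cdot\frac{2NJ'(N)}{J(N)}>0$ and, more importantly, $f(k)^2 \geq \frac{N(k)}{2\log k}\cdot(\text{something positive})$. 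To get $f(k)\to\infty$, I would argue that $\frac{2NJ'(N)}{J(N)}$ is bounded below away from $0$ for large $N$ (since $\log J$ strictly convex forces $\frac{J'(N)}{J(N)}=(\log J)'(N)$ to be increasing, hence eventually bounded below by a positive constant), while $\log k \le C\,\frac{2NJ'(N)}{J(N)}$ for large $k$ is false in general — so instead I would use $f(k)^2 = \frac{N}{2\log k}\big(\log k - \log(N^2 J(N)J'(N))\big)$ and show the bracket is comparable to $\log k$: precisely, because $\log(N^2 J(N)J'(N)) = o(\log k)$ would suffice, but even without that, one has $\log k - \log(N^2 J J') = \frac{2NJ'}{J}$, and plugging this exact identity back yields $f(k)^2 = \frac{N}{2\log k}\cdot\frac{2NJ'}{J} = \frac{N^2 J'}{J\log k}$, which is circular — so the honest route is to bound $\log(N^2 JJ')$ from above by, say, $C\log J(N) + C\log N \le C' N (\log J)'(N) $ for large $N$ using convexity of $\log J$ (which gives $\log J(N) = \int_1^N (\log J)'(t)\,dt \le N(\log J)'(N)$ roughly, modulo the value at $1$), hence $\log(N^2 JJ') \le C'' \cdot \frac{2NJ'}{J}$ and therefore $\log k = \log(N^2 JJ') + \frac{2NJ'}{J} \le (C''+1)\frac{2NJ'}{J}$, whence $f(k)^2 = \frac{N^2 J'}{J\log k}\ge \frac{N}{C''+1}\to\infty$.

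For strict monotonicity of $f$, I would observe that $f(k)^2 = \frac{N(k)^2 J'(N(k))/J(N(k))}{\log k}$ and that $N(k)$ is strictly increasing. The numerator $g(N):=N^2 (\log J)'(N) = N^2 J'(N)/J(N)$ is strictly increasing in $N$: indeed $(\log J)'$ is strictly increasing by strict convexity of $\log J$, and $N^2$ is strictly increasing and positive, so the product is strictly increasing (both factors eventually positive; near $x_0$ one may need the hypotheses of Lemma \ref{G is going to infinity} to ensure positivity, but this is where $k\ge k_0$ enters). Meanwhile $\log k$ is increasing. The ratio of an increasing function of $N(k)$ over $\log k$ need not be monotone a priori, so here I would instead rewrite, using $\log k = \log g(N) - \log N^2 \cdot 0 \dots$ — cleaner: from $\log k = \log(N^2 J J') + \frac{2NJ'}{J}$, substitute to get $f(k)^2 = \frac{g(N)}{\log(N^2 J(N) J'(N)) + 2g(N)/N}$ where I write $g(N)=N^2 J'/J$. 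Then $f(k)^2$ is expressed purely as a function $F(N)$ of $N=N(k)$, and since $N(k)$ is strictly increasing it suffices to show $F$ is strictly increasing in $N$ on $(x_0,\infty)$; this reduces to a one-variable calculus check that I expect to go through using strict convexity of $\log J$ and $J\to\infty$ (the dominant balance is $F(N)\sim N/(C+1)$ as computed above, which is manifestly increasing, and the lower-order corrections can be controlled).

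The main obstacle I anticipate is precisely this last monotonicity verification: showing $F(N)$ is \emph{strictly} increasing rather than merely $F(N)\to\infty$, since the denominator $\log(N^2 J J') + 2g(N)/N$ also increases and one must show the numerator wins at every point, not just asymptotically. I would handle it by differentiating $\log F(N) = \log g(N) - \log\!\big(\log(N^2 J J') + 2g(N)/N\big)$ and showing the derivative is positive; the key inputs are $g'(N)>0$ (strict convexity), $g(N)/N = N(\log J)'(N)$ increasing, and the elementary bound $\log(N^2 J(N) J'(N)) \ll g(N)$ for large $N$ established in the previous paragraph, which forces the denominator's logarithmic derivative to be dominated by $g'(N)/g(N)$. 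A cleaner alternative, if the direct computation is messy, is to prove the two assertions separately: unboundedness as above, and strict monotonicity by noting that on any interval where $f$ failed to increase one could derive a contradiction with the strict convexity of $\log M$; but I expect the direct one-variable argument to be cleanest. I will also need to address the trivial range $k<k_0$, where $f$ is constant — the statement should be read as strict increase for $k\ge k_0$, exactly as phrased.
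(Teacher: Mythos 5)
Your overall strategy---reduce to properties of $N(k)$ via Lemma~\ref{G is going to infinity}, then express $f^2(k)$ as a function of $N$ and analyze it---is in the right spirit, and the identity $\log k = \log\!\big(N^2 J(N)J'(N)\big) + \tfrac{2NJ'(N)}{J(N)}$ you extract from \eqref{solution N} is correct and is essentially the logarithmic form of the constraint that the paper also uses. However, there is a genuine error that breaks your unboundedness argument.

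You repeatedly assume that $\log J$ is strictly convex, and deduce from it that $(\log J)'(N) = J'(N)/J(N)$ is increasing (hence eventually bounded below by a positive constant), and also that $\log J(N) \le N(\log J)'(N)$, leading to $\log(N^2 J J') \le C''\cdot\tfrac{2NJ'}{J}$. None of this follows from the hypotheses. The hypothesis is that $\log M(x) = x\log J(x)$ is strictly convex, which (see \eqref{log convex}) gives $2(\log J)' + x(\log J)'' > 0$; this does \emph{not} imply $(\log J)'' > 0$, and indeed $J'/J$ need not be increasing. Concretely, already in the Gevrey case $M(N)=N^{(s-1)N}$ one has $J(N)=N^{s-1}$, $J'/J = (s-1)/N \to 0$, and $\tfrac{2NJ'}{J}=2(s-1)$ is a bounded constant while $\log(N^2 JJ')\sim(2s-1)\log N\to\infty$, so your claimed inequality $\log(N^2 J J')\le C''\,\tfrac{2NJ'}{J}$ is simply false. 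The Denjoy example $M(N)=(\log N)^N$ is worse: there $J=\log N$, $NJ'/J = 1/\log N \to 0$, so even the weaker fallback ``$NJ'/J$ is bounded below'' fails, yet the lemma is still true (the paper computes $f(k)\sim k^{1/2}(\log k)^{-3/2}\to\infty$). What the hypotheses actually give you (see \eqref{alpha increasing}) is that $x^2 J'(x)/J(x)$ is strictly increasing, which is weaker.

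The paper routes around this by not estimating $NJ'/J$ at all. For unboundedness it uses the optimization characterization $f^2(k)\log k = h(\eps(k)) \ge h(k^{-1/4})$ from Lemma~\ref{property h}, together with $h(k^{-1/4}) = \tfrac14 g^{-1}\!\big(4\sqrt{k}/\log k\big)\log k$ where $g(x)=xJ^2(x)$ is increasing to infinity; this bypasses any pointwise comparison between $\log(N^2 JJ')$ and $NJ'/J$. For monotonicity the paper carries out exactly the one-variable derivative computation you only sketch: it differentiates the constraint in the form $2\log N + 2\log J + \log\beta + 2N\beta = \log k$ (with $\beta=J'/J$), solves for $\dot N(k)$, and reduces $(f^2)'(k)>0$ to the positivity of $\log k - 1 - 2N\beta = 2\log J + \log(N^2\beta)-1$, which holds for $k$ large because $J\to\infty$ and $N^2\beta$ is bounded below by \eqref{alpha increasing}. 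So the skeleton of your monotonicity step is salvageable if you actually do the computation, but your unboundedness step needs to be replaced; as written it contains an error that is not cosmetic (the conclusion $NJ'/J \gtrsim 1$ is false for admissible $M$).
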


This section is devoted to proving the following theorem, which proves our near diagonal estimates with $f(k)$ defined above. 
 
\begin{thm}\label{Estimate of G in a shrinking neighborhood}
	Suppose the \k potential $\phi$ satisfies that $\partial^2\phi\in \mathcal{C}_M(V)$ where $M$ is strictly logarithmically convex and $M(x)^{\frac1x}$ is unbounded. Then there exist positive constants $a, b, C, \gamma, \kappa$, independent of $k$ so that for any $z,w\in U$ with $\gamma \sqrt{\frac{\log k}{k}} \leq |z-w|\leq f(k)\sqrt{\frac{\log k}{k}}$ and $k\geq \kappa$,
	\begin{equation*}
	|\mathcal{G}_{k}(z,w)|+|\nabla_{z}\mathcal{G}_{k}(z,w)| + |\nabla_{z} \nabla_{w}\mathcal{G}_{k}(z,w)|\leq C e^{-bk |z-w|^2}.
	\end{equation*} 
	When $\phi$ is analytic this estimate holds in the much larger neighborhood $\gamma\sqrt{\frac{\log k}{k}} \leq |z-w|\leq 1$ for any $k\geq \kappa$.  
\end{thm}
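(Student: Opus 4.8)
The plan is to follow the strategy of \cite{Ch2}, but to replace the analytic hypoellipticity input (Lemma \ref{lemma hypoanalyticity}) with its quantitative counterpart, Lemma \ref{improved estimates}, which is the place where the class $\mathcal{C}_M$ and the majorant $M$ enter. The first step is to reduce the pointwise estimate on $\mathcal{G}_k(z,w)$ and its derivatives to an $L^\infty$ (or $L^2$) estimate on balls, using the bootstrapping scheme already demonstrated in the proof of Lemma \ref{Apriori Estimates}: once one controls $\mathcal{G}_k$ in $L^2$ on a ball of the appropriate radius centered at $z$ or $w$, the interior elliptic estimates \eqref{Interior Estimates} upgrade this to control of $\mathcal{G}_k$, $\nabla_z \mathcal{G}_k$, and $\nabla_z\nabla_w \mathcal{G}_k$ pointwise, at the cost of powers of $(k + |z-w|^{-1})$, which are harmless against an exponential gain $e^{-bk|z-w|^2}$. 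So the heart of the matter is to produce that exponential gain.

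The second step is the extraction of the gain. Here one works in the unweighted framework with $\Delta_k$, fixes $z_0, w_0 \in U$ with $r := |z_0 - w_0|$ in the stated range, and considers $\mathcal{G}_k(\cdot, w_0)$, which solves $\Delta_k u = 0$ on the ball $B(z_0, r/4)$ (say). Rescaling by $r$ to a unit ball, the rescaled potential has Taylor coefficients that, because $\partial^2\phi \in \mathcal{C}_M(V)$, satisfy bounds governed by $M$ and by the rescaling factor $r \sim \sqrt{(\log k)/k}$. One then invokes Lemma \ref{improved estimates} (the $\mathcal{C}_M$-analogue of Lemma \ref{lemma hypoanalyticity}): a solution of $\Delta_k u = 0$ on $B$ satisfies $\|u\|_{L^\infty(\widetilde B)} \le C e^{-bk}\|u\|_{L^\infty(B)}$ where now $b$ is not a fixed constant but is dictated by the optimization problem of Subsection \ref{f(k)}. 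Tracking the dependence of the exponent on the rescaling radius $r$ and on $k$, one finds the decay rate $e^{-c k r^2}$ precisely when $r \le f(k)\sqrt{(\log k)/k}$; this is exactly the constraint that makes the optimization over the number of derivatives $N$ feasible, and where strict log-convexity of $M$ guarantees that the optimal $N = N(k)$ solving \eqref{solution N} is well-defined and that $f(k) \to \infty$ (Lemma \ref{f infinity}). Combining with the a priori bound of Lemma \ref{Apriori Estimates} to control $\|\mathcal{G}_k(\cdot, w_0)\|_{L^\infty}$ on the larger ball by a polynomial in $(k + r^{-1})$, and then running the bootstrap of the first step, yields the claimed estimate for $\gamma\sqrt{(\log k)/k} \le |z-w| \le f(k)\sqrt{(\log k)/k}$.

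For the analytic case, the argument is the same but simpler: since $\phi$ is analytic, $J(N)$ is bounded and one does not need the optimization at all — Lemma \ref{lemma hypoanalyticity} applies directly on a fixed ball (no shrinking), giving $e^{-bk}$ decay with $b$ a fixed constant, and after rescaling a ball of radius comparable to $r$ one gets $e^{-c k r^2}$ for all $r$ up to a fixed size. This is why the analytic estimate holds in the unshrinking range $\gamma\sqrt{(\log k)/k} \le |z-w| \le 1$.

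I expect the main obstacle to be the careful bookkeeping in the second step: one must propagate the $\mathcal{C}_M$ bounds through the rescaling, through the CR reduction and the (quantitative) hypoellipticity estimate, and through the choice of the number of derivatives, all while keeping every constant uniform in $k$ and in the base point $p \in X$. In particular, verifying that the optimal choice of $N$ genuinely produces the function $f(k)$ of \eqref{growth f} — i.e. that the exponent one gets is $\asymp k r^2$ exactly on the threshold $r \asymp f(k)\sqrt{(\log k)/k}$ and degrades below it — is the delicate point, and it is precisely what Lemma \ref{improved estimates} and the analysis in Subsection \ref{f(k)} are designed to handle.
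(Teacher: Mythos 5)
Your overall architecture tracks the paper's: bootstrapping with \eqref{Interior Estimates} and Lemma \ref{Apriori Estimates} to promote $L^2$/operator bounds to pointwise $C^2$ ones, extraction of the exponential gain via rescaling plus Lemma \ref{improved estimates}, and a direct appeal to Lemma \ref{lemma hypoanalyticity} in the analytic case (which is the paper's first analytic proof, and your description of it is essentially correct). However, the non-analytic case in your second step has a gap that stems from a misreading of Lemma \ref{improved estimates}. You present it as yielding $\|u\|_{L^\infty(\widetilde{B})}\le Ce^{-bk}\|u\|_{L^\infty(B)}$ on fixed concentric balls, and accordingly you rescale by $r:=|z_0-w_0|$. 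In fact the lemma gives the gain $e^{-bf^2(k)\log k}$ (not $e^{-bk}$), and only between balls of the shrinking radii $\delta f(k)\sqrt{(\log k)/k}$ and $\tfrac{1}{2}\delta f(k)\sqrt{(\log k)/k}$. After your rescaling, the effective parameter becomes $\widetilde{k}=kr^2$ and the desired gain is $e^{-c\widetilde{k}}=e^{-ck|z-w|^2}$, but what Lemma \ref{improved estimates} supplies at the rescaled unit scale is only $e^{-bf^2(\widetilde{k})\log\widetilde{k}}$, which is far weaker: for $h\in G^s$ with $s>1$, one has $f^2(\widetilde{k})\log\widetilde{k}\asymp\widetilde{k}^{1/(2s-1)}\ll\widetilde{k}$ as $\widetilde{k}\to\infty$.

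The paper fixes this by decoupling the dilation factor from the distance. For each $r\in[2k^{-1/2},1]$ one sets $T(\zeta)=r\zeta$, notes that $\mathcal{G}_k(r\cdot,0)$ satisfies a $\widetilde{k}$-twisted equation with $\widetilde{k}=kr^2$ and that $\phi(r\zeta)/r^2$ lies uniformly in $\mathcal{C}_M$ for $0<r\leq1$ (the constant $A$ in \eqref{eq majorant} does not degrade under the dilation), and then applies Lemma \ref{improved estimates} with parameter $\widetilde{k}$ at rescaled points $\zeta$ lying in the annulus $|\zeta|\asymp f(\widetilde{k})\sqrt{(\log\widetilde{k})/\widetilde{k}}$ --- exactly where the lemma's shrinking balls live. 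There the gain $e^{-bf^2(\widetilde{k})\log\widetilde{k}}$ equals, up to constants, $e^{-b\widetilde{k}|\zeta|^2}=e^{-bk|r\zeta|^2}$, i.e.\ the claimed $e^{-bk|z-w|^2}$. Sweeping $r$ over $[2k^{-1/2},1]$ covers the full range $\sqrt{(\log k)/k}\le|z-w|\le f(k)\sqrt{(\log k)/k}$. This two-parameter sweep, rather than a single rescaling to unit scale, is what your proposal is missing; once adopted, the remaining steps you sketch (Lemma \ref{Apriori Estimates} to bound the $L^\infty$ norms, interior elliptic bootstrapping for $\nabla_z$ and $\nabla_z\nabla_w$, and absorbing polynomial factors into the exponential by taking $\gamma$ large) go through as you describe.
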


\begin{rmk}
	The constant $b$ depend on the following data:
	\begin{itemize}
		\item[(a)] the lower bound of $\sqrt{-1}\del\bdel \phi$, i.e.  a positive constant $c$ such that $\left(\phi_{i\bar{j}}\right)\geq cI$ in $B$;
		\item[(b)] the lower bound of $(g_{i\bar{j}})$, i.e.  a positive constant $c$ such that $\left(g_{i\bar{j}}\right)\geq cI$ in $B$;
		\item[(c)] the constant $A$ as in \eqref{eq majorant} for metric $g$ and $\partial^2\phi$.
	\end{itemize}
	And $C$ depends on $(a),(b),(c)$ and certain $C^m$ norm of $\phi$ and $g$ for some $m$ only depending on the dimension $n$.
\end{rmk}

The following lemma is the counterpart of Lemma \ref{lemma hypoanalyticity} for metrics in the class $\mathcal{C}_M$, which is a key step in proving the above theorem.  As in \cite{Ch2}, we will construct analytic metrics to approximate $g$ and $h$ in the $\mathcal{C}_M$ class and apply Lemma \ref{lemma hypoanalyticity} to the approximation metrics. As the constants in \eqref{hypoanalyticity} depend only on the data mentioned in Remark \ref{uniform hypoanalyticity}, which are uniform in a shrinking ball depending on $k$ for all the approximation metrics, we can prove \eqref{hypoanalyticity} for the $\mathcal{C}_M$ metrics in such a shrinking ball.   
\begin{lemma}\label{improved estimates}
	Let the ball $B=B(0,3)\subseteq \mathbb{C}^n$ be equipped with a smooth Hermitian metric $g\in \mathcal{C}_M(B)$. Let $L$ be a holomorphic line bundle over $B$, equipped with a positive smooth Hermitian metric $h$ such that all second order derivatives $\partial^2\log h\in \mathcal{C}_M(B)$. Then for each $\delta \in (0, 1)$, there exist positive constants $C$ and $b$, such that for any solution $u\in \Gamma(B,\Lambda^{0,q})$ $(0< q < n)$ of $\Delta_{k}u=0$ on $B$,
	\begin{equation}\label{Gevrey hypoellipticity}
	\|u\|_{L^\infty(B(z,\tfrac{1}{2} \delta f(k)\sqrt{\frac{\log k}{k}}))}\leq Ce^{-bf^2(k)\log k}\|u\|_{L^{\infty}(B(z, \delta f(k)\sqrt{\frac{\log k}{k}}))}, \quad \mbox{ for any } z\in B(0,1),
	\end{equation}  
	where $f$ is defined in \eqref{growth f}.
\end{lemma}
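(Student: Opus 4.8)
The plan is to reduce the $\mathcal{C}_M$ case to the analytic case of Lemma \ref{lemma hypoanalyticity} by constructing, on a suitably small ball whose radius shrinks with $k$ at exactly the rate $\delta f(k)\sqrt{\tfrac{\log k}{k}}$, real analytic metrics $\hat g$ and $\hat h$ that agree well enough with $g$ and $h$ that $u$ is still (almost) annihilated by the approximating Laplacian, and whose analyticity constants are controlled uniformly in $k$. Concretely: fix $z\in B(0,1)$, write $r_k = \delta f(k)\sqrt{\tfrac{\log k}{k}}$, rescale the ball $B(z,r_k)$ to the unit ball via $\zeta \mapsto z + r_k\zeta$, and replace $\phi = -\log h$ (and similarly each entry of $g$) by the partial sum $\hat\phi$ of its Taylor expansion at $z$ up to order roughly $N = N(k)$, the solution of \eqref{solution N}. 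The rescaled tail $\phi - \hat\phi$ and its first two derivatives then carry a factor $(A r_k)^{N}M(N) = (A r_k J(N))^{N}$, and the whole point of the definitions \eqref{solution N}–\eqref{growth f} is that with $r_k$ of this size the quantity $r_k J(N)$ is bounded below $1$, so this tail is exponentially small in $N$, hence (after unwinding $N(k)$ via \eqref{solution N}) exponentially small in $f^2(k)\log k$.

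The key steps, in order: (1) Fix the order of truncation $N=N(k)$ and verify, from the logarithmic convexity of $M$ and the optimization carried out in Subsection \ref{f(k)} (the content of Lemma \ref{G is going to infinity} and Lemma \ref{f infinity}), that with the scale $r_k = \delta f(k)\sqrt{\log k/k}$ one has $A r_k J(N(k)) \le \theta < 1$ for $k$ large, and more precisely that $\bigl(A r_k J(N)\bigr)^{N}$, together with the $N$-many lost derivatives, is $\le e^{-b f^2(k)\log k}$ for a suitable $b>0$; this is where the exponent in \eqref{Gevrey hypoellipticity} comes from. (2) Build $\hat g$, $\hat h$ by Taylor-truncating $g$ and $\log h$ at $z$ to order $N$ on $B(z,r_k)$; check that $\hat h$ is still positive (its curvature form differs from that of $h$ by the exponentially small tail) and that $\hat g$ is still a metric, shrinking $\delta$ or enlarging $k_0$ if needed, and record that the analyticity constants of $\hat g,\hat\phi$ in the rescaled unit ball, as well as the lower bounds on $\sqrt{-1}\partial\bar\partial\hat\phi$ and $(\hat g_{i\bar j})$, are all bounded by constants independent of $k$ — this is exactly what makes Remark \ref{uniform hypoanalyticity} applicable with $k$-independent output. (3) Let $\hat\Delta_k$ be the Laplacian of $(\hat g,\hat h)$; then $\hat\Delta_k u = (\hat\Delta_k - \Delta_k)u$, and by the local formula \eqref{laplace in local coordinates} the right side involves the tail coefficients times $u$, $\nabla u$, $\nabla^2 u$. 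Using the interior elliptic estimate \eqref{Interior Estimates} on balls of radius $\sim 1/k$ to bound $\nabla u,\nabla^2 u$ on $B(z,\tfrac34 r_k)$ by $\|u\|_{L^\infty(B(z,r_k))}$ with a polynomial-in-$k$ loss, conclude $\|\hat\Delta_k u\|_{L^\infty} \le (\text{poly}(k))\,e^{-b'f^2(k)\log k}\|u\|_{L^\infty(B(z,r_k))}$, which is still $\le e^{-b''f^2(k)\log k}\|u\|_{L^\infty(B(z,r_k))}$. (4) Apply Lemma \ref{lemma hypoanalyticity} (in the rescaled picture) to a solution $\hat u$ of $\hat\Delta_k\hat u=0$ with $\hat u - u$ small: write $u = \hat u + (u-\hat u)$ where $\hat u$ solves the homogeneous $\hat\Delta_k$ equation with the same boundary data as $u$ on $\partial B(z,r_k)$ (or, cleaner, correct $u$ by $\hat\Delta_k^{-1}(\hat\Delta_k u)$ using the a priori bound on the Green operator of $\hat\Delta_k$, whose lower bound $\sim k$ survives since $r_k\sqrt{k}\to\infty$), apply \eqref{hypoanalyticity} to $\hat u$ to gain the factor $e^{-bk r_k^2} = e^{-b\delta^2 f^2(k)\log k}$ on $B(z,\tfrac12 r_k)$, and absorb the error term. (5) Undo the rescaling to land the estimate on the unshrunk balls, and note that in the analytic case $J$ is bounded so one may take $N$ as large as one wishes with $r_k$ a fixed constant, recovering Lemma \ref{lemma hypoanalyticity} itself in the limit.

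The main obstacle I expect is Step (1) together with the quantitative bookkeeping in Step (4): one must ensure that the scale $r_k$ at which $A r_k J(N(k))$ becomes $\le\theta<1$ is simultaneously large enough that $k r_k^2 = \delta^2 f^2(k)\log k \to \infty$ (so that the Weitzenböck lower bound $\Box_k \gtrsim k$ is still strong enough on $B(z,r_k)$ to make $\hat u$ exponentially small, giving a nontrivial gain $e^{-bkr_k^2}$) and small enough that the truncation error and the elliptic losses $\text{poly}(k)$ are beaten by $e^{-bf^2(k)\log k}$. Reconciling these three competing requirements is precisely the optimization encoded in \eqref{solution N}–\eqref{growth f}; making the inequalities line up — in particular checking that the $e^{\frac{2NJ'(N)}{J(N)}}$ factor in \eqref{solution N} is exactly what is needed to keep $A r_k J(N)$ below $1$ while $N$ is as large as \eqref{growth f} demands — will be the technical heart of the argument, and it leans essentially on the strict logarithmic convexity of $M$ (equivalently, monotonicity properties of $J$ and $J'/J$) established in Subsection \ref{f(k)}.
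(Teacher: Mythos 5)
Your proposal takes essentially the same route as the paper's proof: Taylor-truncate $\phi$ and $g$ at order $N=N(k)$, rescale to a ball of radius $r\asymp f(k)\sqrt{\log k/k}$ (which is chosen precisely so that $rJ(N)$ stays below a fixed threshold, making the truncation error exponentially small), correct $u$ by solving $\hat\Delta_k v = \eta\,\hat\Delta_k u$ using the Weitzenb\"ock coercivity $\gtrsim kr^2$ of the approximating Laplacian, apply the analytic hypoellipticity Lemma \ref{lemma hypoanalyticity} with uniform constants (Remark \ref{uniform hypoanalyticity}) to $u-v$, and feed the two competing exponentials $e^{-bkr^2}$ and $e^{-bN\log(1/\eps)}$ into the optimization that defines $N(k)$ and $f(k)$ through \eqref{solution N}--\eqref{growth f}. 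This is exactly the structure of the paper's argument.

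Two technical points that the paper handles but your sketch leaves implicit, and that you would need to address to make the rescaling go through. First, because $\partial^2\phi\in\mathcal C_M$ controls only derivatives of order $\ge 2$, the constant, linear, and pure holomorphic/antiholomorphic quadratic terms of $\phi$ are uncontrolled and blow up under the rescaling $\zeta\mapsto \phi(r\zeta)/r^2$; the paper first gauges these away by subtracting the pluriharmonic polynomial $P$ and twisting by $e^{ikQ}$ (with $Q$ the real conjugate of $P$), reducing to $\phi(\zeta)=\sum\phi_{i\bar j}(0)\zeta_i\bar\zeta_j+O(|\zeta|^3)$. Without this normalization the rescaled potential is not uniformly bounded, and the analyticity constant in Remark \ref{uniform hypoanalyticity}(c) is not uniform in $k$. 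Second, your ``correct $u$ by $\hat\Delta_k^{-1}(\hat\Delta_k u)$'' step requires a coercive operator on a fixed domain; the paper achieves this by extending the truncated data to all of $\mathbb C^n$ via a cutoff (the functions $\Psi_{N,r}$ and $g_{N,r}$ that interpolate between the Taylor polynomial and the degree-two model outside a fixed ball), and then cutting off the right-hand side $\hat\Delta_k u$ before inverting, so that the H\"ormander-type $L^2$ bound $\|\hat\Delta_k^{-1}\|\lesssim (kr^2)^{-1}$ is available globally. Both are routine once noticed, and fit cleanly into the plan you describe, but they are load-bearing: the whole quantitative bookkeeping you flag as the ``technical heart'' depends on having the normalization and the global extension in place.
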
 
Here $\partial^2\log h\in \mathcal C_M(B)$ means that for every local potential $\phi=-\log h$ on an open set $B$, there exists $A>0$ satisfying \eqref{eq majorant} for any multi-index $\alpha\in \mathbb{Z}_{\geq 0}^{2n}$ with $|\alpha|\neq 0$ and any $z\in B$,

\begin{rmk}\label{uniform improved estimates}
		This lemma does not require the metric $g$ is polarized by the line bundle. We still denote $\phi=-\log h$, which is a \k potential in the polarized case. The constants $C$ and $b$ depend on the same data as in the previous remark.
\end{rmk}

Before proving this lemma, we first show the application of this lemma in proving Theorem \ref{Estimate of G in a shrinking neighborhood}.

\begin{proof}[Proof of Theorem \ref{Estimate of G in a shrinking neighborhood}]
	Given $w\in B(p,1)$, by using the Bochner coordinates at $w$, we can write the \k potential as $\phi(z)=|z|^2+O(|z|^4)=|z|^2\varphi(z)$ for some $\varphi\in C^{\infty}(B(w,2))$ and any $z\in B(w,2)$. Fix $r\geq \tfrac{2}{\sqrt{k}}$ and define a linear map $T: B(0,2)\subseteq \mathbb{C}^n\rightarrow U$ such that $T(\zeta)=r\zeta$. Pulling back the operator $\Dbar=\oo{D}_{k,\phi}$, we obtain
	\begin{equation}\label{Dbar operator}
	\oo{D}_{k,\phi(r\zeta)}^\dagger=\dbar+\tfrac{k}{2} \dbar\left(\phi(r\zeta)\right)\wedge=\dbar+\tfrac{\widetilde{k}}{2} \dbar\left(\tfrac{\phi(r\zeta)}{r^2}\right)\wedge=\oo{D}^\dagger_{\widetilde{k}, \tfrac{\phi(r\zeta)}{r^2}},
	\end{equation}
	where $\dbar$ always acts on the $\zeta$ variable and $$\widetilde{k}=k r^2>\log k.$$ Associated to the $L^2$ inner product induced by the metric $g(r\zeta)$ over $B(0,2)$, we can also define the adjoint of $\oo{D}^{\dagger}_{k,\phi(r\zeta)}$ and the Laplace operator $\Delta^{\dagger}_{k,\phi(r\zeta)}$. Note $\tfrac{\phi(r\zeta)}{r^2}=|\zeta|^2\varphi(r\zeta)$ as a function of $\zeta$ belongs to $C^{\infty}(B(0,2))$ for any $0<r\leq 1$. What is more, the following data are also uniform for $0<r\leq 1$.
	\begin{itemize}
		\item[(a)] $\del\bdel \tfrac{\phi(r\zeta)}{r^2}=\left(\del\bdel \phi\right)(r\zeta)$ is bounded below uniformly for $0<r\leq 1$.
		\item[(b)] $g(r\zeta)$ is bounded below uniformly for $0<r\leq 1$.
		\item[(c)] $\partial^2\tfrac{\phi(r\zeta)}{r^2}=\partial^2\phi(r\zeta)$ is in $\mathcal{C}_M$ uniformly for $0<r\leq 1$. This is to say,  there exists a positive constant $A$ satisfying \eqref{eq majorant} for all functions $\partial^2\tfrac{\phi(r\zeta)}{r^2}$ with $0<r\leq 1$.
		\item[(d)] The metric $g(r\zeta)$ is in $\mathcal{C}_M$ uniformly for $0< r\leq 1$.
	\end{itemize}
	Pulling back the Green kernel $\mathcal{G}_{k}$ via $T$ and using \eqref{Dbar operator}, for any nonzero $\zeta \in B(0,2)$ we have
	\begin{align}\label{Laplace on unit ball}
	\Delta^\dag_{\widetilde{k},\tfrac{\phi(r\zeta)}{r^2}}\mathcal{G}_{k}(r\zeta,0)
	=\Delta^\dag_{k,\phi(r\zeta)}\mathcal{G}_{k}(r\zeta,0)=T^*\left(\Delta_{k}\mathcal{G}_{k}\right)(\zeta,0)=0.
	\end{align}
	By taking $\delta=\frac{1}{4}$ in Lemma \ref{improved estimates} , for any $\zeta$ with $\frac{1}{3}f(\widetilde{k})\sqrt{\frac{\log\widetilde{k}}{\widetilde{k}}}\leq |\zeta|\leq \frac{4}{3}f(\widetilde{k})\frac{\sqrt{\log\widetilde{k}}}{\widetilde{k}}$, we have
	\begin{align*}
	|\mathcal{G}_{k}(r\zeta,0)|
	\leq& Ce^{-bf^2(\widetilde{k})\log\widetilde{k}}\|\mathcal{G}_{k}(r\,\cdot,0)\|_{L^{\infty}(B(\zeta,\tfrac{1}{4}f(\widetilde{k})\sqrt{\frac{\log\widetilde{k}}{\widetilde{k}}}))}.
	\end{align*}
	Note that whenever $\xi\in B(\zeta,\tfrac{1}{4}f(\widetilde{k})\sqrt{\frac{\log\widetilde{k}}{\widetilde{k}}})$, one has $|\xi|\geq \tfrac{1}{12}f(\widetilde{k})\sqrt{\frac{\log\widetilde{k}}{\widetilde{k}}}$. Since $f(k)$ is bounded from below by a positive constant by Lemma \ref{f infinity}, we have $|\xi|\geq ck^{-\tfrac{1}{2}}$ for some positive constant $c$. Now for any $\tfrac{2}{\sqrt{k}}\leq r\leq 1$, the a priori estimates on $\mathcal{G}_{k}$ in Lemma \ref{Apriori Estimates} imply
	\begin{align*}
	|\mathcal{G}_{k}(r\zeta,0)|
	\leq 
	Ck^ae^{-bf^2(\widetilde{k})\log\widetilde{k}}.
	\end{align*} 
	The constants $C$ and $b$ only depend on the positivity of $\sqrt{-1}\partial\bar{\partial}\phi$, the constant $A$ in \eqref{eq majorant} for $\partial^2\phi$, and $n$. The constant $a$ only depends on the dimension $n$.
	Since for any $\zeta\neq 0$, 
	\begin{equation*}
	\Delta_{\widetilde{k},\tfrac{\phi(r\zeta)}{r^2}}^{\dagger}\mathcal{G}_{k}(r\zeta,0)=0,
	\end{equation*}
	where $\Delta_{\widetilde{k},\tfrac{\phi(r\zeta)}{r^2}}^{\dagger}$ acts on the first variable $\zeta$, by using the interior estimates \eqref{Interior Estimates} with $R=\tfrac{1}{6\widetilde{k}}$, and Sobolev embedding theorem, we obtain
	\begin{align*}
	\|\mathcal{G}_{k}(r\zeta,0)\|_{C^1(A(0;\frac{1}{2}f(\widetilde{k})(\sqrt{\frac{\log\widetilde{k}}{\widetilde{k}}}),f(\widetilde{k})(\sqrt{\frac{\log\widetilde{k}}{\widetilde{k}}}))}
	\leq& Ck^a \|\mathcal{G}_{k}(r\zeta,0)\|_{L^\infty(A(0;\frac{1}{3}f(\widetilde{k})(\sqrt{\frac{\log\widetilde{k}}{\widetilde{k}}}),\frac{4}{3}f(\widetilde{k})(\sqrt{\frac{\log\widetilde{k}}{\widetilde{k}}})))}
	\\
	\leq& C k^a e^{-bf^2(\widetilde{k})\log\widetilde{k}}
	\\
	\leq& C k^a e^{-\frac{b}{4}k|r\zeta|^2}	
	\end{align*}
	In each line, $C$ and $a$ may be renamed by new constants, but they only depend on the positivity of $\sqrt{-1}\partial\bar{\partial}\phi$, the constant $A$ in \eqref{eq majorant} for $\partial^2\phi$, and $n$. And $a$ still only depends on $n$. Therefore, uniformly for any $w\in U$, any $r$ with $\tfrac{2}{\sqrt{k}}\leq r\leq 1$, and any $z,w\in U$ satisfying $$\frac{r}{2}f(k r^2)\sqrt{\tfrac{\log(k r^2)}{kr^2}}\leq |z-w|\leq rf(k r^2)\sqrt{\tfrac{\log(k r^2)}{kr^2}},$$ we have
	\begin{equation*}
	|\mathcal{G}_{k}(z,w)|+|\nabla_{z}\mathcal{G}_{k}(z,w)| \leq C k^a e^{-bk |z-w|^2}.
	\end{equation*} 
	In particular, if we vary $r$ within the interval $[2k^{-1/2},1]$, we have for $z,w\in U$ with $\sqrt{\frac{\log k}{k}} \leq |z-w|\leq f(k)\sqrt{\frac{\log k}{k}}$,
	\begin{equation*}
	|\mathcal{G}_{k}(z,w)|+|\nabla_{z}\mathcal{G}_{k}(z,w)|\leq Ck^a e^{-bk |z-w|^2}.
	\end{equation*} 
	
	Since $\Delta_k \nabla_z\mathcal{G}_k(z,w)=\Delta_k \nabla_z\mathcal{G}_k(z,w)=0$ if $\Delta_k$ is acting on $w$ variable, the estimate on $\nabla_z\nabla_{w}\mathcal{G}_{k}(z,w)$ follows by bootstrapping. By the relation \eqref{Bergman and Green Kernel}, this lemma immediately implies the Theorem \ref{main} for any points $z, w \in X$ with $\gamma\sqrt{\frac{\log k}{k}}\leq d(z,w)\leq f(k)\sqrt{\frac{\log k}{k}}$. Here $\gamma$ is a sufficiently large constant so that $k^a$ can be absorbed by $e^{-\tfrac{b}{2}kd(z,w)^2}$. This finishes the proof of Theorem \ref{Estimate of G in a shrinking neighborhood}, except it remains to prove it in the analytic case. 
\end{proof} 

\subsection{ Proof of Theorem \ref{Estimate of G in a shrinking neighborhood} in the analytic case.} We provide two proofs. The first proof \footnote{This proof was communicated to us by M. Christ \cite{christemail}.} is obvious from the above rescaling argument, the only change is that instead of Lemma \ref{improved estimates} which is specialized for the non-analytic cases, we use Lemma \ref{lemma hypoanalyticity} combined with the uniformity of the following data:
\begin{itemize}
		\item[(a)] $\del\bdel \tfrac{\phi(r\zeta)}{r^2}=\left(\del\bdel \phi\right)(r\zeta)$ is bounded below uniformly for $0<r\leq 1$.
		\item[(b)] $g(r\zeta)$ is bounded below uniformly for $0<r\leq 1$.
		\item[(c)] $\partial^2\tfrac{\phi(r\zeta)}{r^2}=\partial^2\phi(r\zeta)$ is uniformly analytic for $0<r\leq 1$.
		\item[(d)] The metric $g(r\zeta)$ is uniformly analytic for $0< r\leq 1$.
	\end{itemize}
The second proof involves taking the limit $s \to 1^+$ in Corollary \ref{Main}. Since we need to understand the constants involved in terms of $s$, we shall give this proof in Subsection \ref{2ndanalytic}. 

\subsection{Proof of Lemma \ref{improved estimates}}\label{sec estimates in shrinking neighborhood}
By translation, we can assume $z=0$ and work in $B(0,2)$. Take the Taylor expansion of $\phi$ at $0$, that is,
\begin{equation*}
	\phi(\zeta)=\phi(0)+\sum_{i=1}^{n}\phi_i(0)\zeta_i+\sum_{i=1}^{n}\phi_{\bar{i}}(0)\oo{\zeta}_i+\tfrac{1}{2}\sum_{i,j=1}^{n}\phi_{ij}(0)\zeta_i\zeta_j+\sum_{i,j=1}^{n}\phi_{i\bar{j}}(0)\zeta_i\oo{\zeta}_j+\tfrac{1}{2}\sum_{i,j=1}^{n}\phi_{\bar{i}\bar{j}}(0)\oo{\zeta}_i\oo{\zeta}_j+O(|\zeta|^3). 
\end{equation*}
Set
\begin{equation*}
	P=\phi(0)+\sum_{i=1}^{n}\phi_i(0)\zeta_i+\sum_{i=1}^{n}\phi_{\bar{i}}(0)\oo{\zeta}_i+\tfrac{1}{2}\sum_{i,j=1}^{n}\phi_{ij}(0)\zeta_i\zeta_j+\tfrac{1}{2}\sum_{i,j=1}^{n}\phi_{\bar{i}\bar{j}}(0)\oo{\zeta}_i\oo{\zeta}_j.
\end{equation*}
Let $Q$ be the real-valued harmonic conjugate of $P$, normalized to vanish at $0$. Since $\bdel (P+iQ)=0$, 
\begin{equation*}
	\oo{D}_{k,\phi}=\bdel+\tfrac{k}{2}\bdel \phi\wedge=e^{ik Q}\left(\bdel+\tfrac{k}{2}\bdel(\phi-P)\wedge\right)e^{-ik Q}=e^{ik Q}\oo{D}_{k,\phi-P}e^{-ik Q}.
\end{equation*}
With the $L^2$ inner product induced by $g$ on $\Gamma(B(0,2), \Lambda^{0,q})$, the formal adjoint operators are related by
\begin{equation*}
\oo{D}^*_{k,\phi}=e^{ik Q}\oo{D}^*_{k,\phi-P}e^{-ik Q}.
\end{equation*}
For the Laplace operators, we have
\begin{equation*}
	\Delta_{k,\phi}=e^{ik Q}\Delta_{k,\phi-P}e^{-ik Q}.
\end{equation*}
Therefore,
\begin{equation*}
	\Delta_{k,\phi}u=0 \quad \mbox{ if and only if } \quad \Delta_{k,\phi-P}e^{-ik P}u=0.
\end{equation*}   
What is more, $u$ and $e^{-ik P}u$ share the same $L^{\infty}$ norm. So by replacing $\phi$ by $\phi-P$, we can assume the Taylor expansion of $\phi$ at $0$ has the form
\begin{equation*}
	\phi(\zeta)=\sum_{i,j=1}^{n}\phi_{i\bar{j}}\zeta_i\oo{\zeta}_j+O(|\zeta|^3).
\end{equation*} 

We will now extend $\phi$ to a plurisubharmonic function on $\mathbb{C}^n$. Let $\eta$ be a smooth cut-off function on $\mathbb{C}^n$  such that $\eta=1$ on $B(0,1)$ and $\supp \eta\subseteq B(0,2)$. For any $\zeta\in \mathbb{C}^n$, define 
\begin{equation*}
\psi(\zeta)=\phi_2(\zeta)+\eta\left(\tfrac{\zeta}{\varepsilon_0}\right)\left(\phi(\zeta)-\phi_2(\zeta)\right),
\end{equation*}
where  $\varepsilon_0$ is a small positive number and $\phi_2$ is the Taylor polynomial of degree $2$ for $\phi$ at $0$, which in here $\phi_2(\zeta)=\sum_{i,j=1}^n\phi_{i\bar{j}}(0)\zeta_i\oo{\zeta}_j$. Then $\psi$ is close to $\phi_2$ in $C^2$ norm when $\varepsilon_0$ is sufficiently small. 
\begin{lemma}
	When $\varepsilon_0$ is sufficiently small, we have
	\begin{equation*}
	\|\psi-\phi_2\|_{C^2(\mathbb{C}^n)}=O(\varepsilon_0).
	\end{equation*}
\end{lemma}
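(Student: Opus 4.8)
The plan is a direct Leibniz-rule computation that plays the order-of-vanishing of the ``error'' $g:=\phi-\phi_2$ against the small support of the rescaled cut-off. Since $\psi-\phi_2=\eta(\zeta/\varepsilon_0)\,g(\zeta)$, the whole statement reduces to estimating $\bigl\|\eta(\cdot/\varepsilon_0)\,g\bigr\|_{C^2(\mathbb{C}^n)}$.

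First I would record the Taylor estimate for $g$. After the normalization performed just above, $\phi_2$ is the second-order Taylor polynomial of $\phi$ at $0$, so $g=\phi-\phi_2$ vanishes to order $3$ at the origin. Fixing once and for all the reference ball $B(0,1)$, Taylor's theorem gives a constant $C_0$ depending only on $\|\phi\|_{C^3(\overline{B(0,1)})}$ such that
\begin{equation*}
|D^\alpha g(\zeta)|\le C_0\,|\zeta|^{\,3-|\alpha|},\qquad |\zeta|\le 1,\ |\alpha|\le 2 .
\end{equation*}

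Next I would assume $\varepsilon_0\le\tfrac12$, so that $\eta(\cdot/\varepsilon_0)$ is supported in $B(0,2\varepsilon_0)\subseteq B(0,1)$, where $|\zeta|\le 2\varepsilon_0$. For a multi-index $\alpha$ with $|\alpha|\le 2$, the Leibniz rule expands $D^\alpha\bigl(\eta(\zeta/\varepsilon_0)\,g(\zeta)\bigr)$ into a finite sum of terms $c_{\alpha,\beta}\,\varepsilon_0^{-|\beta|}(D^\beta\eta)(\zeta/\varepsilon_0)\,D^{\alpha-\beta}g(\zeta)$ with $\beta\le\alpha$, and on the support of the cut-off each such term is bounded by
\begin{equation*}
\|\eta\|_{C^2}\,\varepsilon_0^{-|\beta|}\cdot C_0\,(2\varepsilon_0)^{\,3-|\alpha-\beta|}=C'\,\varepsilon_0^{\,3-|\alpha|}\le C'\,\varepsilon_0 ,
\end{equation*}
where I used $|\beta|+|\alpha-\beta|=|\alpha|\le 2$. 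The negative power of $\varepsilon_0$ coming from differentiating the cut-off is exactly cancelled by the extra vanishing of $g$ on the $O(\varepsilon_0)$-small support. Taking the supremum over $\zeta$ and the maximum over $|\alpha|\le 2$ yields $\|\psi-\phi_2\|_{C^2(\mathbb{C}^n)}\le C''\varepsilon_0$, with $C''$ depending only on $\|\eta\|_{C^2}$ and $\|\phi\|_{C^3(\overline{B(0,1)})}$.

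There is no genuine obstacle here; the only two points needing a little care are (i) fixing a reference ball (say $B(0,1)$) on which the $C^3$ bound for $\phi$, hence the constant $C_0$ above, is uniform in $\varepsilon_0$, and (ii) the bookkeeping $\varepsilon_0^{-|\beta|}\cdot\varepsilon_0^{\,3-|\alpha-\beta|}=\varepsilon_0^{\,3-|\alpha|}$ ensuring that every Leibniz term is $O(\varepsilon_0)$ for $|\alpha|\le 2$ (in fact the $C^0$ and $C^1$ parts are even smaller, $O(\varepsilon_0^{3})$ and $O(\varepsilon_0^{2})$, but $O(\varepsilon_0)$ is all that is needed).
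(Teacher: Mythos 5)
Your proof is correct and takes essentially the same approach as the paper's: restrict to $|\zeta|\le 2\varepsilon_0$ where the cut-off factor is nonzero, and play the cubic vanishing of $\phi-\phi_2$ against the $\varepsilon_0^{-|\beta|}$ factors produced by differentiating the rescaled cut-off. The paper states this in one line (``by Taylor's theorem $\phi-\phi_2=O(|\zeta|^3)$, therefore $\|\psi-\phi_2\|_{C^2(B(0,2\varepsilon_0))}=O(\varepsilon_0)$''), whereas you make the Leibniz-rule bookkeeping $\varepsilon_0^{-|\beta|}\cdot\varepsilon_0^{3-|\alpha-\beta|}=\varepsilon_0^{3-|\alpha|}$ explicit; this is a welcome clarification of the same argument rather than a different route.
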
 

\begin{proof}
	Since $\psi-\phi_2= 0$ when $|\zeta|\geq 2\varepsilon_0$, we only need to consider $|\zeta|\leq 2\varepsilon_0$. Since $\varepsilon_0$ is sufficiently small, by Taylor's theorem, 
	\begin{equation*}
	\phi(\zeta)-\phi_2(\zeta)=O(|\zeta|^3).
	\end{equation*}
	Therefore, 
	\begin{equation*}
	\|\psi-\phi_2\|_{C^2(\mathbb{C}^n)}=\|\psi-\phi_2\|_{C^2(B(0,2\varepsilon_0))}=O(\varepsilon_0).
	\end{equation*}
\end{proof}

In particular, this implies that $\left(\partial_i\partial_{\bar{j}}\psi\right)(\zeta)$ is positive uniformly for $\zeta\in \mathbb{C}^n$ when $\varepsilon_0$ is sufficiently small, because $\partial_{i}\partial_{\bar{j}} \phi_2(z)=\partial_{i}\partial_{\bar{j}} \phi(0)$ is a positive definite constant matrix. From now on, we will fix such a sufficiently small constant $\eps_0$.

To proceed, we need the following lemma on the function $J(x)=M(x)^{1/x}$ on $\mathbb{R}_{>0}$. 
\begin{lemma}\label{g properties 1}
	If $\log M(x)$ is strictly convex and $J(x)$ is unbounded, then
	there exists some $x_0>0$ such that $J(x)$ is strictly increasing for $x>x_0$
\end{lemma}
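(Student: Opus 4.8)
The plan is to reduce the statement to the monotonicity of a single auxiliary function. Write $L(x):=\log M(x)$, which is $C^2$ on $\mathbb{R}_{>0}$ since $M$ is positive and $C^2$, and is strictly convex by hypothesis, so $L'$ is strictly increasing. Since $\log J(x)=L(x)/x$ is differentiable on $\mathbb{R}_{>0}$, a direct computation gives
\[
\frac{J'(x)}{J(x)}=\Bigl(\frac{L(x)}{x}\Bigr)'=\frac{xL'(x)-L(x)}{x^2}=:\frac{g(x)}{x^2},
\]
so, as $J>0$, the sign of $J'(x)$ equals the sign of $g(x)$ at every point. Thus it is enough to show that $g$ is positive for all large $x$.

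First I would check that $g$ is strictly increasing on $\mathbb{R}_{>0}$. Differentiating, $g'(x)=L'(x)+xL''(x)-L'(x)=xL''(x)\ge 0$ since $L$ is convex; and strictness follows because for $0<y<x$ one has $g(x)-g(y)=\int_y^x tL''(t)\,dt\ge y\int_y^x L''(t)\,dt=y\bigl(L'(x)-L'(y)\bigr)>0$, using that $L'$ is strictly increasing. Next, I claim $g$ is positive somewhere: if instead $g\le 0$ on all of $\mathbb{R}_{>0}$, then $J'\le 0$ throughout, so $J$ is non-increasing, and hence $J(x)\le J(1)$ for all $x\ge 1$, contradicting the assumption that $J$ is unbounded. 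So pick any $x_0>0$ with $g(x_0)>0$. By the strict monotonicity just established, $g(x)>g(x_0)>0$ for every $x>x_0$, hence $J'(x)>0$ there, so $J$ is strictly increasing on $(x_0,\infty)$, as claimed.

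I do not anticipate a real obstacle: this is essentially a one-variable calculus fact, and the only point requiring a little care is what ``$J$ unbounded'' should mean — the argument only uses that $J$ is not bounded above near $+\infty$, which is precisely the property that feeds into the optimization for $f(k)$ later. Equivalently, one can argue that strict monotonicity of $g$ forces $\lim_{x\to\infty}g(x)$ to exist in $(-\infty,+\infty]$, and a finite nonpositive limit would again make $\log J$ eventually non-increasing and hence $J$ bounded at infinity.
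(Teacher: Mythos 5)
Your proof is correct and takes essentially the same route as the paper: your auxiliary function $g(x)=xL'(x)-L(x)$ is exactly the quantity $x^2 J'(x)/J(x)$ whose strict monotonicity the paper establishes by expanding $(x\log J)''>0$, and both arguments then conclude by noting that $g\le 0$ everywhere would force $J$ to be bounded. Your route through $L=\log M$ is slightly more direct, and your integral argument for strict monotonicity of $g$ is a bit more careful than the paper's (it does not require $L''>0$ pointwise, only strict convexity), but the underlying idea is identical.
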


\begin{proof}
	Since $\log M(x)$ is strictly convex, $x\log J(x)$ is strictly convex. By taking two derivatives, we have
	\begin{equation}\label{log convex}
	2\,\frac{J'(x)}{J(x)}+x\frac{J''(x)J(x)-(J'(x))^2}{J^2(x)}>0.
	\end{equation} 
	It follows that
	\begin{equation*}
	x\left(\frac{J'(x)}{J(x)}\right)'=x\frac{J''(x)J(x)-(J'(x))^2}{J^2(x)}
	>-2\frac{J'(x)}{J(x)}.
	\end{equation*}
	Therefore,
	\begin{equation}\label{alpha increasing}
	\left(x^2\frac{J'(x)}{J(x)}\right)'>0.
	\end{equation}
	So either $J'(x)$ is always negative, or $J'(x)$ is positive starting from some $x_0$. In the first case, $J'(x)<0$ implies that $J(x)$ is bounded, which contradicts our assumption.
	So $J'(x)$ is positive after some $x_0$, and the result follows immediately.
\end{proof}
\begin{defn} \label{x0} We define $x_0$ to be the smallest number such that $J'(x)>0$ for $x>x_0$ and $J(x) \geq J (y)$ for all $ y\leq x_0$. 
\end{defn}

Next, we construct a sequence of real analytic functions ${\Psi}_{N, r}$ to approximate $\psi$. Define
\begin{equation}\label{Definition PsiN}
{\Psi}_{N, r}(\zeta)=r^{-2}\psi_N(r\zeta)+r^{-2}\left(1-\eta(\zeta)\right)\left(\psi_2(r\zeta)-\psi_N(r\zeta)\right),
\end{equation}
where $\psi_N$ is the Taylor polynomial of degree $N$ for $\psi$ at $0$. We will restrict $r$ to be $$\frac{1}{\sqrt{k}}\leq r\leq \eps M(N)^{-1/N},$$
 for some variable $\eps>0$ sufficiently small, where all approximation functions ${\Psi}_{N, r}$ are uniformly plurisubharmonic on $\mathbb{C}^n$.

\begin{lemma}\label{PsiN Uniformly PSH}
	There exits some $\eps+1>0$ depending on constant $A$, dimension $n$ and the metric $g$, such that $\sqrt{-1}\del\bdel{\Psi}_{N, r}(\zeta)$ is positive uniformly for $\zeta\in \mathbb{C}^n$ and $r\leq \eps M(N)^{-1/N}$ for all $0<\eps\leq \eps_1$. 
\end{lemma}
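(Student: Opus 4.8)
The plan is to show that $\sqrt{-1}\,\partial\bar\partial\Psi_{N,r}$ stays uniformly positive by writing $\Psi_{N,r}$ as a sum of the fixed Hessian piece $\psi_2$ plus a remainder that is small in $C^2$, uniformly in $N$ and in the allowed range of $r$. Since $\partial_i\partial_{\bar j}\psi_2(\zeta)=\phi_{i\bar j}(0)$ is a fixed positive-definite constant matrix, say bounded below by $cI$, it suffices to prove $\|\Psi_{N,r}-\psi_2\|_{C^2(\mathbb{C}^n)}$ can be made smaller than $c/2$ by choosing $\eps$ small.

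First I would decompose, using \eqref{Definition PsiN} and $\psi_2(r\zeta)=r^2\psi_2(\zeta)$ (the degree-$2$ Taylor polynomial scales exactly),
\begin{equation*}
\Psi_{N,r}(\zeta)-\psi_2(\zeta)=\eta(\zeta)\bigl(r^{-2}\psi_N(r\zeta)-\psi_2(\zeta)\bigr),
\end{equation*}
so the difference is supported in $B(0,2)$, where $\eta$ and its derivatives up to second order are bounded by a dimensional constant. Hence it is enough to bound $r^{-2}\psi_N(r\zeta)-\psi_2(\zeta)$ in $C^2(B(0,2))$. Writing $\psi_N(r\zeta)=\sum_{2\le|\beta|\le N}\frac{D^\beta\psi(0)}{\beta!}(r\zeta)^\beta$ (the linear and constant terms vanish since $\psi$ agrees with $\phi$ near $0$ and we normalized $\phi$ to have vanishing $0$th and $1$st order parts and vanishing holomorphic/antiholomorphic Hessian), the degree-$2$ part reproduces $r^2\psi_2(\zeta)$ exactly, so
\begin{equation*}
r^{-2}\psi_N(r\zeta)-\psi_2(\zeta)=\sum_{3\le|\beta|\le N}\frac{D^\beta\psi(0)}{\beta!}\,r^{|\beta|-2}\,\zeta^\beta .
\end{equation*}

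Now I would apply two derivatives: each $C^2$ derivative of the monomial $\zeta^\beta$ on $B(0,2)$ contributes at most $|\beta|^2 2^{|\beta|}$ (or a comparable factor), and using $\psi\in\mathcal C_M$ near $0$ — more precisely that $\partial^2\psi\in\mathcal C_M$ so $\bigl|\tfrac{D^\beta\psi(0)}{\beta!}\bigr|\le A^{|\beta|-2}M(|\beta|-2)\le A^{|\beta|}M(|\beta|)$ up to harmless adjustments — I get
\begin{equation*}
\bigl\|r^{-2}\psi_N(r\,\cdot)-\psi_2\bigr\|_{C^2(B(0,2))}\le C\sum_{m=3}^{N}\binom{m+2n-1}{2n-1}\,|\beta\text{-factor}|\,A^{m}M(m)\,r^{m-2}.
\end{equation*}
With $r\le\eps M(N)^{-1/N}=\eps J(N)^{-1}$ and, by Lemma \ref{g properties 1}/Definition \ref{x0}, $J(m)\le J(N)$ for $m\le N$ (once $N\ge x_0$; the finitely many small $N$ are handled by making $\eps$ depend on them too, or absorbed into the uniform constant), each term $A^mM(m)r^{m-2}=A^m J(m)^m r^{m-2}\le A^m J(N)^m \eps^{m-2}J(N)^{-(m-2)}=A^m J(N)^2\eps^{m-2}$; but this still carries a $J(N)^2$ which is unbounded, so the naive bound is not good enough and this is where care is needed. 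The fix is that the factor $r^{|\beta|-2}$ should be compared against $J(N)^{-(|\beta|-2)}$ together with the combinatorial/$\zeta^\beta$ factors eating one more power: reorganizing, $A^m M(m) r^{m-2}\le (A\eps)^{m-2}A^2 M(2)\cdot\bigl(J(m)/J(N)\bigr)^{m}J(N)^{2}/J(m)^{2}$, and since monotonicity of $J$ gives $J(m)\le J(N)$ the ratio is $\le 1$, while the leftover is controlled because we only need the sum to be $O(\eps)$: I would more carefully split off the $m=3$ term (which is $O(\eps)$ directly) and bound the tail $m\ge 4$ by a geometric series in $A\eps$ after verifying the coefficient-growth is dominated. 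The genuine technical heart — and the main obstacle — is exactly this bookkeeping: showing the constraint $r\le\eps J(N)^{-1}$ is the right one to kill the $M(N)$-growth of the top Taylor coefficients, which relies crucially on the log-convexity of $M$ through the monotonicity $J(m)\le J(N)$ for $3\le m\le N$ established in Lemma \ref{g properties 1}. Once the $C^2$-smallness $\|\Psi_{N,r}-\psi_2\|_{C^2}\le C A^2M(2)(A\eps)+O(\eps)$ is in hand, choosing $\eps_1$ so that this is $<c/2$ yields $\sqrt{-1}\,\partial\bar\partial\Psi_{N,r}\ge\tfrac c2 I>0$ uniformly, completing the proof; the dependence of $\eps_1$ is then only on $A$, $n$, and (through $c$) the metric $g$, as claimed.
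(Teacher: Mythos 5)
Your overall strategy is the same as the paper's: reduce to a uniform $C^2$-smallness estimate for $\Psi_{N,r}-\psi_2$, exploit the exact scaling $\psi_2(r\zeta)=r^2\psi_2(\zeta)$, expand the difference as the tail $\sum_{3\le|\beta|\le N}\tfrac{D^\beta\psi(0)}{\beta!}r^{|\beta|-2}\zeta^\beta$, and use the log-convexity of $M$ through the monotonicity $J(m)\le J(N)$. You also correctly identify where the argument must do real work. But the ``harmless adjustment'' you make is exactly the step that breaks the proof. The hypothesis $\partial^2\psi\in\mathcal C_M$ gives
\begin{equation*}
\Bigl|\frac{D^\beta\psi(0)}{\beta!}\Bigr|\le A^{|\beta|-2}M(|\beta|-2),
\end{equation*}
and with $r\le\eps M(N)^{-1/N}=\eps J(N)^{-1}$ this yields, term by term,
\begin{equation*}
A^{|\beta|-2}M(|\beta|-2)\,r^{|\beta|-2}\le (A\eps)^{|\beta|-2}\Bigl(\frac{J(|\beta|-2)}{J(N)}\Bigr)^{|\beta|-2}\le (A\eps)^{|\beta|-2},
\end{equation*}
using $J(m)\le J(N)$ for all $m\le N$ once $N\ge x_0$ (Lemma \ref{g properties 1} together with Definition \ref{x0}, whose second clause handles $m\le x_0$). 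After including the monomial factors $|\beta|^2\,2^{|\beta|}$ from two derivatives of $\zeta^\beta$ on $B(0,2)$, the sum is a geometric series in $A\eps$ and is $O(\eps)$ for $\eps\le\tfrac1{16A}$. This is precisely what the paper does.

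By upgrading $A^{|\beta|-2}M(|\beta|-2)$ to $A^{|\beta|}M(|\beta|)$ you shift the index of $M$ by two and thereby create the $J(N)^2$ factor that you correctly flag as unbounded. The attempted reorganization does not repair it: the displayed inequality
\begin{equation*}
A^m M(m) r^{m-2}\le (A\eps)^{m-2}A^2 M(2)\Bigl(\frac{J(m)}{J(N)}\Bigr)^{m}\frac{J(N)^{2}}{J(m)^{2}}
\end{equation*}
simplifies, after substituting $M(m)=J(m)^m$, $M(2)=J(2)^2$ and $r\le\eps J(N)^{-1}$, to $J(m)^2\le J(2)^2$, which is false for $m>2$ since $J$ is increasing past $x_0$. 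Splitting off $m=3$ and ``verifying the coefficient-growth is dominated'' does not help either, because the unbounded factor $J(N)^2$ multiplies every term of the tail, not just finitely many. The fix is simply to not make the adjustment: carry $M(|\beta|-2)$ through, which is the form the hypothesis actually supplies and the one that the constraint $r\le\eps J(N)^{-1}$ is tailored to cancel.
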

\begin{proof}
	It is sufficient to show that when $\eps$ is small enough, uniformly for $r\leq \eps M(N)^{-1/N}$, one has
	\begin{equation} \label{PsiN C2}
	\|{\Psi}_{N, r}(\zeta)-\psi_2(\zeta)\|_{C^2(\mathbb{C}^n)}=O(\eps).
	\end{equation}
	By \eqref{Definition PsiN}, $\psi_{N,r}$ writes into
	\begin{equation*}
	{\Psi}_{N, r}(\zeta)=\psi_2(\zeta)+r^{-2}\eta(\zeta)\left(\psi_N(r\zeta)-\psi_2(r\zeta)\right).
	\end{equation*} 
	Since $\supp\eta\subseteq B(0,2)$,
	we only need to estimate $\|\frac{1}{r^2}\left(\psi_N(r\zeta)-\psi_2(r\zeta)\right)\|_{C^2(|\zeta|\leq 2)}$.
	Recall $\psi_N$ and $\psi_2$ are Taylor polynomials for $\psi$ at $0$. Denote $v=(\zeta,\oo{\zeta})$. Then
	\begin{align*}
	\tfrac{1}{r^2}\left(\psi_N(r\zeta)-\psi_2(r\zeta)\right)
	=\sum_{3\leq |\alpha|\leq N} \tfrac{D^{\alpha}_v\psi}{\alpha!}(0) r^{|\alpha|-2}v^{\alpha}.
	\end{align*} 
	By Lemma \ref{g properties 1} and Definition \ref{x0}, if $N \geq x_0$ then we have $M(|\alpha|)^{1/|\alpha|}\leq M(N)^{1/N}$ for any $1\leq |\alpha|\leq N$. Note that $\psi=\phi$ in $B(0,\eps_0)$ and thus we have
	\begin{align*}
	\left\|\tfrac{1}{r^2}\left(\psi_N(r\zeta)-\psi_2(r\zeta)\right)\right\|_{C^2(|\zeta|\leq 2)}
	\leq& \sum_{3\leq |\alpha|\leq N} \left|\tfrac{D^{\alpha}_v\psi}{\alpha!}(0)\right| r^{|\alpha|-2}\|v^{\alpha}\|_{C^2(|\zeta|\leq 2)}
	\\
	\leq & \sum_{3\leq |\alpha|\leq N} A^{|\alpha|-2} M(|\alpha|-2)\left({\eps } M(N)^{-1/N}\right)^{|\alpha|-2}|\alpha|^22^{|\alpha|}
	\\
	\leq &\sum_{3\leq |\alpha|\leq N} (\eps A)^{|\alpha|-2}|\alpha|^22^{|\alpha|}.
	\end{align*}
	Therefore, when $\eps \leq \frac{1}{16 A}$, 
	\begin{align*}
	\left\|\tfrac{1}{r^2}\left(\psi_N(r\zeta)-\psi_2(r\zeta)\right)\right\|_{C^2(|\zeta|\leq 2)}
	\leq  2^{2n+7}A\eps.
	\end{align*}
	So the result follows by the fact that $\tfrac{1}{r^2}\psi(r\zeta)=\sum_{i,j=1}^{n}\phi_{i\bar{j}}(0)\zeta_i\oo{\zeta}_{j}$.
\end{proof}

\begin{rmk}\label{PsiN Ck}
	By using the same argument, we can actually generalize \eqref{PsiN C2} to any $C^m$ norm. That is to say, 
	\begin{equation*}
	\|{\Psi}_{N, r}(\zeta)-\psi_2(\zeta)\|_{C^m(\mathbb{C}^n)}=O_m(\eps).
	\end{equation*}
\end{rmk}

The approximation functions ${\Psi}_{N, r}$ are also uniformly real analytic as stated in the following lemma.
\begin{lemma}\label{PsiN Uniformly Analytic}
	There exist some $\eps_1>0$ depending on $A$, such that ${\Psi}_{N, r}(\zeta)$ are real analytic on $|\zeta|\leq 1$, uniformly for $r\leq {\eps M(N)^{-1/N}}$ for all $0<\eps\leq \eps_1$. 
\end{lemma}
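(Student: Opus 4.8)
The plan is to show that the Taylor coefficients of $\Psi_{N,r}$ at any point of $B(0,1)$ satisfy the standard real-analyticity bound $|D^\beta \Psi_{N,r}(\zeta)| \leq C(\Psi)^{|\beta|+1}\beta!$ with a constant $C(\Psi)$ independent of $N$ and of $r \leq \eps M(N)^{-1/N}$. Since $\Psi_{N,r}$ is a polynomial (it is $r^{-2}\psi_N(r\zeta)$ away from the cutoff region, modified by $(1-\eta(\zeta))$ times a polynomial), it is automatically real analytic; the whole point is the \emph{uniformity} of the analyticity constant. I would split $\Psi_{N,r}$ into three pieces according to \eqref{Definition PsiN}: the main term $r^{-2}\psi_N(r\zeta)$, the term $r^{-2}(1-\eta(\zeta))\psi_2(r\zeta)$, and $-r^{-2}(1-\eta(\zeta))\psi_N(r\zeta)$. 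The quadratic term $r^{-2}\psi_2(r\zeta)=\sum \phi_{i\bar j}(0)\zeta_i\bar\zeta_j$ is literally independent of $N$ and $r$, so its contribution (and the $(1-\eta)$-modified version of it, using that $\eta$ is a fixed smooth cutoff with bounded derivatives of all orders) poses no difficulty.

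The heart of the matter is bounding the derivatives of $g(\zeta):=r^{-2}\psi_N(r\zeta)=\sum_{|\alpha|\leq N}\frac{D^\alpha_v\psi}{\alpha!}(0)\, r^{|\alpha|-2} v^\alpha$ on $|\zeta|\leq 2$, uniformly in $N$ and $r$. Writing $v=(\zeta,\bar\zeta)$, a derivative $D^\beta$ of the monomial $v^\alpha$ is nonzero only when $\beta \leq \alpha$ componentwise and contributes a factor $\frac{\alpha!}{(\alpha-\beta)!}\,v^{\alpha-\beta}$, which on $|\zeta|\leq 2$ is bounded by $\frac{\alpha!}{(\alpha-\beta)!}2^{|\alpha|-|\beta|}$. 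Using the hypothesis $\partial^2\phi \in \mathcal C_M(B)$ — here crucially in the form that $\left|\frac{D^\alpha_v\psi}{\alpha!}(0)\right| \leq A^{|\alpha|}M(|\alpha|)$ for $|\alpha|\geq 3$ (and the quadratic and lower terms handled separately) — together with $r^{|\alpha|-2}\leq (\eps M(N)^{-1/N})^{|\alpha|-2}$ and the monotonicity $M(|\alpha|)^{1/|\alpha|}\leq M(N)^{1/N}$ for $3\leq|\alpha|\leq N$ (valid once $N\geq x_0$, by Lemma~\ref{g properties 1} and Definition~\ref{x0}), one gets
\begin{equation*}
|D^\beta g(\zeta)| \leq \sum_{|\alpha|\leq N} A^{|\alpha|} M(|\alpha|)\, \eps^{|\alpha|-2} M(N)^{-\frac{|\alpha|-2}{N}}\,\frac{\alpha!}{(\alpha-\beta)!}\,2^{|\alpha|-|\beta|}.
\end{equation*}
The factors $M(|\alpha|)$ and $M(N)^{-(|\alpha|-2)/N}$ combine, via the monotonicity, into something bounded by $M(N)^{2/N}\leq M(\text{const})$-type bounds or are simply absorbed: more precisely $M(|\alpha|)M(N)^{-(|\alpha|-2)/N}=M(|\alpha|)^{1-\frac{|\alpha|}{|\alpha|}\cdot\frac{|\alpha|-2}{N}\cdot\frac{N}{|\alpha|}}\cdots$ — the clean way is to write $M(|\alpha|)= (M(|\alpha|)^{1/|\alpha|})^{|\alpha|}\leq (M(N)^{1/N})^{|\alpha|}$, so $M(|\alpha|)\,M(N)^{-(|\alpha|-2)/N} \leq M(N)^{2/N}=J(N)^2$, which is \emph{not} bounded in general. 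The fix is to keep one more power of $\eps$: choosing $\eps$ small (depending only on $A$, $n$) the geometric factor $(\eps A\cdot 2)^{|\alpha|}$ beats $J(N)^{|\alpha|/N}\cdot(\text{stuff})$ — actually the correct bookkeeping is $M(|\alpha|)r^{|\alpha|-2}\leq M(|\alpha|)\eps^{|\alpha|-2}M(N)^{-(|\alpha|-2)/N}$ and $M(|\alpha|)\leq M(N)^{|\alpha|/N}$ gives $\leq \eps^{|\alpha|-2}M(N)^{2/N}\leq \eps^{|\alpha|-2}J(N)^2$; since we will ultimately also use $r\leq \eps M(N)^{-1/N}$ with the \emph{extra} two factors of $M(N)^{1/N}$ absorbed against $M(|\alpha|)\leq M(N)^{|\alpha|/N}$ for the top-degree terms only, one arranges that $J(N)^2$ is harmless because it is balanced by $r^2\leq \eps^2 M(N)^{-2/N}=\eps^2 J(N)^{-2}$ already absorbed in forming $g=r^{-2}\psi_N(r\cdot)$ — i.e. one should estimate $\frac{1}{r^2}|\psi_N(r\zeta)|$ directly rather than termwise strip the $r^{-2}$. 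This is exactly the computation already done in Lemma~\ref{PsiN Uniformly PSH} and Remark~\ref{PsiN Ck} for the $C^m$ norms; I would invoke that to see the sum is $O_\beta(\eps)$-controlled, hence
\begin{equation*}
|D^\beta g(\zeta)| \leq C_1 \sum_{\ell\geq |\beta|}(C_2\eps)^{\ell}\,\frac{\ell!}{(\ell-|\beta|)!},
\end{equation*}
and summing the tail of this series (which converges for $C_2\eps<1$) yields $|D^\beta g(\zeta)|\leq C^{|\beta|+1}\beta!$ with $C$ depending only on $A$ and $n$.

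Finally, I would combine the three pieces: the analyticity constant of $r^{-2}(1-\eta(\zeta))\psi_2(r\zeta)$ depends only on the fixed data $(\phi_{i\bar j}(0))$ and the fixed cutoff $\eta$ (Cauchy estimates on $\eta$, which is smooth and compactly supported, give bounds $|D^\gamma\eta|\leq C_\eta^{|\gamma|+1}\gamma!$ — actually $\eta$ is merely $C^\infty$, so to get a genuine \emph{analytic} bound one must instead note that we only ever need finitely many derivatives in the application of Lemma~\ref{lemma hypoanalyticity}, or choose $\eta$ more carefully; the cleaner route, which I expect the authors take, is that on $|\zeta|\leq 1$ we have $\eta\equiv 1$ so $1-\eta\equiv 0$ there and the correction terms simply \emph{vanish} on the ball $B(0,1)$ where we assert analyticity). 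This last observation is in fact the key simplification: on $|\zeta|\leq 1$, $\Psi_{N,r}(\zeta)=r^{-2}\psi_N(r\zeta)$ exactly, a polynomial whose uniform analyticity constant we bounded above. The main obstacle — and the one requiring care — is the bookkeeping in the second paragraph: ensuring that the competing factors $M(|\alpha|)$ (growing) and $r^{|\alpha|-2}\leq (\eps M(N)^{-1/N})^{|\alpha|-2}$ (shrinking) combine to a convergent geometric series with ratio $< 1$ uniformly in $N$, which is precisely where the monotonicity of $J(x)=M(x)^{1/x}$ past $x_0$ (Lemma~\ref{g properties 1}) is indispensable.
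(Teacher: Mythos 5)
Your overall approach matches the paper's: restrict to $|\zeta|\leq 1$ where $1-\eta\equiv 0$, so that $\Psi_{N,r}(\zeta)=r^{-2}\psi_N(r\zeta)$ is a polynomial, then bound its Taylor coefficients uniformly by combining the $\mathcal{C}_M$ hypothesis with the monotonicity of $J(x)=M(x)^{1/x}$ past $x_0$ and the constraint $r\leq\eps M(N)^{-1/N}$. You do eventually identify both the right reduction and the right mechanism.

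The middle bookkeeping, however, goes astray because of an indexing slip, and the attempted rescues do not close the gap. The hypothesis $\partial^2\phi\in\mathcal{C}_M$ bounds the Taylor coefficients of the \emph{second} derivatives of $\phi$, so the correct estimate on $B(0,\eps_0)$ where $\psi=\phi$ is
\begin{equation*}
\left|\frac{D^\alpha_v\psi}{\alpha!}(0)\right|\leq A^{|\alpha|-2}\,M(|\alpha|-2), \qquad |\alpha|\geq 2,
\end{equation*}
not $A^{|\alpha|}M(|\alpha|)$. With the index corrected, the factors combine cleanly:
\begin{equation*}
M(|\alpha|-2)\,r^{|\alpha|-2}
\leq M(|\alpha|-2)\left(\eps\,M(N)^{-1/N}\right)^{|\alpha|-2}
=\eps^{|\alpha|-2}\left(\frac{J(|\alpha|-2)}{J(N)}\right)^{|\alpha|-2}
\leq\eps^{|\alpha|-2},
\end{equation*}
using $J(|\alpha|-2)\leq J(N)$ for $|\alpha|\leq N$ once $N\geq x_0$. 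The problematic $J(N)^2=M(N)^{2/N}$ factor in your calculation never occurs: the $r^{-2}$ in the definition of $\Psi_{N,r}$ is precisely what shifts the degree by $2$ and makes the cancellation exact. This renders your ``keep one more power of $\eps$'' discussion moot, and --- more significantly --- your fallback appeal to Remark~\ref{PsiN Ck} does not supply what is needed: that remark gives $\|\Psi_{N,r}-\psi_2\|_{C^m}=O_m(\eps)$ with constants depending on $m$, which is only finite smoothness, not the uniform $C^{|\beta|+1}\beta!$ control that real analyticity requires. Once the index is fixed, the remaining step is the single geometric series $\sum_{2\leq|\alpha|\leq N}2^{|\alpha|}(A\eps)^{|\alpha|-2}$, and your concluding summation is fine for $\eps$ small depending only on $A$ and $n$.
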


\begin{proof}
	We denote $v=(\zeta,\oo{\zeta})$. When $|\zeta|\leq 1$,  \eqref{Definition PsiN} writes into
	\begin{equation*}
	{\Psi}_{N, r}(\zeta)=r^{-2}\psi_N(r\zeta)=\sum_{2\leq |\alpha|\leq N}\tfrac{D_v^\alpha\psi}{\alpha!}(0)r^{|\alpha|-2} v^{\alpha}.
	\end{equation*}
	After taking derivatives, 
	\begin{equation*}
	\tfrac{D_v^{\beta}{\Psi}_{N, r}(\zeta)}{\beta!}=\sum_{2\leq |\alpha|\leq N}\sum_{\beta\leq \alpha}\tfrac{D_v^\alpha\psi}{\alpha!}(0)r^{|\alpha|-2} \binom{\alpha}{\beta} v^{\alpha-\beta}.
	\end{equation*}
	By using the fact that $\psi=\phi $ on $B(0,\eps_0)$, 
	\begin{align*}
	\left|\tfrac{D_v^{\beta}{\Psi}_{N, r}(\zeta)}{\beta!}\right|
	\leq & \sum_{2\leq |\alpha|\leq N}\sum_{\beta\leq \alpha}A^{|\alpha|-2}{M(|\alpha|-2)} r^{|\alpha|-2} \binom{\alpha}{\beta} 
	\\
	\leq & \sum_{2\leq |\alpha|\leq N}2^{|\alpha|} (A\eps)^{|\alpha|-2}. 
	\end{align*}
	When $\eps\leq \frac{1}{4A}$, 
	\begin{equation*}
	\left|\tfrac{D_v^{\beta}{\Psi}_{N, r}(\zeta)}{\beta!}\right|\leq 2^{2n+5}.
	\end{equation*}
\end{proof}

Similarly, we will also approximate the metric $g$ by analytic ones. Define 
\begin{equation}\label{gN}
	g_{N, r}(\zeta)=g_N(r\zeta)+\left(1-\eta(\zeta)\right)\left(g(0)-g_N(r\zeta)\right),
\end{equation}
where $g_N$ is the Taylor polynomial of degree $N$ for $g$ at $0$. By the same argument as in Lemma \ref{PsiN Uniformly PSH}, we can show that
\begin{equation}\label{gN Uniformly bounded}
\|g_{N, r}(\zeta)-g(0)\|_{C^2(\mathbb{C}^n)}=O(\eps).
\end{equation}
In particular, when $\eps$ is sufficiently small, $g_{N, r}$ is bounded below by some positive constant on $\mathbb{C}^n$, uniformly for $r\leq \eps M(N)^{-1/N}$. What is more, the same argument as in Lemma \ref{PsiN Uniformly Analytic} implies that $g_{N, r}$ is real analytic on $|\zeta|\leq 1$, uniformly for $r\leq \eps M(N)^{-1/N}$.

For each plurisubharmonic function ${\Psi}_{N, r}$ on $\mathbb{C}^n$ and any $k>0$, we can define $\oo{D}_{k,{\Psi}_{N-n-1,r}}=\bdel+\tfrac{k}{2}\bdel{\Psi}_{N-n-1,r}\wedge$. With the $L^2$ inner product on differential forms induced by $g_{N-n-3,r}$, we have the formal adjoint $\oo{D}_{k,{\Psi}_{N-n-1,r}}^*$. We can further define the Laplace operator $$\Delta_{k,{\Psi}_{N-n-1,r}, g_{N-n-3,r}}=\oo{D}_{k,{\Psi}_{N-n-1,r}}\oo{D}_{k,{\Psi}_{N-n-1,r}}^*+\oo{D}_{k,{\Psi}_{N-n-1,r}}^*\oo{D}_{k,{\Psi}_{N-n-1,r}}.$$ One key observation is that 
\begin{equation*}
	\oo{D}_{k r^2,{\Psi}_{N-n-1}}=\oo{D}_{k,r^2{\Psi}_{N-n-1}},
\end{equation*}
and so is that
\begin{equation*}
	\Delta_{k r^2,{\Psi}_{N-n-1,r},g_{N-n-3,r}}=\Delta_{k,r^2{\Psi}_{N-n-1,r},g_{N-n-3,r}}.
\end{equation*}

Denote $\Delta_N=\Delta_{k r^2,{\Psi}_{N-n-1,r},g_{N-n-3,r}}$ for simplicity and let $\Delta=\Delta_{k,\psi(r\zeta),g(r\zeta)}$ for $|\zeta|\leq 1$. We will compare these two Laplacians. 
\begin{lemma}
	For any $w\in C^{n+2}(\overline{B(0,1)})$ and any $|\zeta|\leq 1$, we have
	\begin{equation*}
	\left|\Delta w(\zeta)-\Delta_N w(\zeta)\right|_{C^{n}}\leq Ck^2 (2^{n+3}A\eps)^{N-n-2}|w(\zeta)|_{C^{n+2}},
	\end{equation*}  
	where $C$ is a constant only depending on the metric $g$ and dimension $n$ and $|w(\zeta)|_{C^m}= \sum_{j=0}^m|\partial^jw(\zeta)|$ for any $m\geq 0$.
\end{lemma}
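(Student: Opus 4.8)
The plan is to compare the two Laplace operators $\Delta = \Delta_{k,\psi(r\zeta),g(r\zeta)}$ and $\Delta_N = \Delta_{kr^2,\Psi_{N-n-1,r},g_{N-n-3,r}}$ coefficient by coefficient, using the explicit local expression \eqref{laplace in local coordinates}. Both operators are obtained by the same universal algebraic recipe: their coefficients are fixed polynomial expressions in the metric $g$, the potential $\phi$ (equivalently $\psi$ after the reduction, rescaled to $\psi(r\zeta)/r^2$), the inverse metric, and derivatives up to second order, with the zeroth-order part carrying a factor $k^2$ (in the rescaled picture $\widetilde k = kr^2$, but since we keep writing $\Delta$ and $\Delta_N$ with the relation $\Delta_{kr^2,\Psi_{N-n-1}} = \Delta_{k,r^2\Psi_{N-n-1,r}}$, the relevant large parameter is still bounded by $k^2$). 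So the difference $\Delta w - \Delta_N w$ is a differential operator of order $\leq 2$ whose coefficients are differences of these polynomial expressions evaluated at $(g, \psi(r\zeta)/r^2)$ versus $(g_{N-n-3,r}, \Psi_{N-n-1,r})$, and it suffices to estimate, in $C^n$ norm on $\overline{B(0,1)}$, the differences $g(r\zeta) - g_{N-n-3,r}(\zeta)$, $\psi(r\zeta)/r^2 - \Psi_{N-n-1,r}(\zeta)$, and the corresponding differences for the inverse metrics, together with their derivatives up to the needed order.

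Next I would carry out the key estimate on these differences. On $|\zeta|\leq 1$ we have $\eta(\zeta) = 1$, so by \eqref{Definition PsiN} and \eqref{gN} both approximants reduce to the truncated rescaled Taylor polynomials: $\Psi_{N-n-1,r}(\zeta) = r^{-2}\psi_{N-n-1}(r\zeta)$ and $g_{N-n-3,r}(\zeta) = g_{N-n-3}(r\zeta)$. Hence
\begin{equation*}
\frac{\psi(r\zeta)}{r^2} - \Psi_{N-n-1,r}(\zeta) = \sum_{|\alpha| \geq N-n} \frac{D_v^\alpha \psi}{\alpha!}(0)\, r^{|\alpha|-2} v^{\alpha},
\end{equation*}
and since $\psi = \phi$ near $0$, the hypothesis $\partial^2\phi \in \mathcal C_M(V)$ gives $|D_v^\alpha\psi(0)/\alpha!| \leq A^{|\alpha|}M(|\alpha|)$ (shifting indices by two, absorbing constants), so the bound $r \leq \eps M(N)^{-1/N}$ together with the monotonicity $M(|\alpha|)^{1/|\alpha|} \leq M(N)^{1/N}$ for $N-n \leq |\alpha| \leq N$ — and a tail/cutoff argument for the $|\alpha| > N$ terms, where one uses that $\psi$ itself is smooth with bounded $C^{n+2}$-controlled data — makes each term geometrically small: the leading term is of size $(2^{n+3}A\eps)^{N-n-2}$ up to the combinatorial factor from differentiating $v^\alpha$ down to $C^n$. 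The same computation applies verbatim to $g - g_{N-n-3,r}$ with $N$ shifted by $n+3$. Differentiating up to order $n$ in $\zeta$ costs only polynomial factors in $|\alpha|$, which are absorbed by slightly enlarging the geometric base, as was already done in Lemmas \ref{PsiN Uniformly PSH} and \ref{PsiN Uniformly Analytic}.

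Finally I would assemble the pieces. The coefficients of $\Delta - \Delta_N$ are polynomials in the data and their differences; writing a polynomial difference $P(a) - P(b)$ as $(a-b)$ times a bounded factor (bounded because, by \eqref{PsiN C2}, \eqref{gN Uniformly bounded} and Remark \ref{PsiN Ck}, all the approximants stay within $O(\eps)$ of the constant second-order model in every $C^m$ norm, and $g_{N-n-3,r}$ stays uniformly bounded below so its inverse is controlled too) reduces everything to the difference estimates above. The zeroth-order coefficients contribute the factor $k^2$ (from the $c*u$ term), the first-order ones a factor $k$, both dominated by $Ck^2$; applying $\Delta - \Delta_N$ to $w \in C^{n+2}(\overline{B(0,1)})$ and measuring in $C^n$ then yields
\begin{equation*}
|\Delta w(\zeta) - \Delta_N w(\zeta)|_{C^n} \leq Ck^2\, (2^{n+3}A\eps)^{N-n-2}\, |w(\zeta)|_{C^{n+2}},
\end{equation*}
with $C$ depending only on the metric $g$ and $n$, as claimed. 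The main obstacle is bookkeeping: tracking exactly how many derivatives of the data enter each coefficient so that the $C^{n+2}$ norm on $w$ and the shifts $N \mapsto N-n-1$, $N-n-3$ in the truncation degrees are the right ones, and making sure the loss from differentiating $v^\alpha$ down to $C^n$ level is genuinely absorbed into the constant $2^{n+3}$ (and not a worse $N$-dependent factor) — this is where the precise choice of truncation degrees relative to $n$ matters, and it is handled exactly as in the proofs of Lemmas \ref{PsiN Uniformly PSH} and \ref{PsiN Uniformly Analytic}.
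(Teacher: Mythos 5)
Your overall strategy coincides with the paper's: work on $|\zeta|\leq 1$ where the cutoffs are inactive, compare the two Laplacians coefficient by coefficient through their universal polynomial structure in $(g,\psi,g^{-1})$ and their derivatives, reduce to estimating the differences $\psi(r\zeta)/r^2-\Psi_{N-n-1,r}(\zeta)$ and $g(r\zeta)-g_{N-n-3,r}(\zeta)$ in $C^{n+2}$, and propagate those through the coefficients with a bounded-Lipschitz argument (using the uniform $C^m$ closeness to the quadratic model and the uniform lower bound on the approximating metrics to handle $g^{-1}$). The factor $k^2$, the exponent $N-n-2$, and the choice of truncation degrees are all accounted for in the same way.

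However, there is a genuine gap in the key step. You write
\begin{equation*}
\frac{\psi(r\zeta)}{r^2}-\Psi_{N-n-1,r}(\zeta)=\sum_{|\alpha|\geq N-n}\frac{D_v^\alpha\psi}{\alpha!}(0)\,r^{|\alpha|-2}v^\alpha,
\end{equation*}
which asserts that $\psi$ equals its infinite Taylor series at $0$. For $\psi$ merely in $\mathcal C_M$ (not analytic), this is false: the formal Taylor series need not converge at all, and if it does it need not converge to $\psi$. Moreover, even if the identity were valid, the terms with $|\alpha|>N$ cannot be controlled by $r\leq \eps M(N)^{-1/N}$ combined with the $\mathcal C_M$ bound, since $J(|\alpha|)>J(N)$ for $|\alpha|>N>x_0$ (Lemma \ref{g properties 1} gives monotonicity the \emph{wrong} way for you here), so $r^{|\alpha|}M(|\alpha|)$ is not geometrically small. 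Your ``tail/cutoff argument'' flags the difficulty but does not resolve it. The correct replacement is Taylor's theorem with integral remainder,
\begin{equation*}
\psi(r\zeta)-\psi_{N-n-1}(r\zeta)=\sum_{|\alpha|=N-n}\frac{N-n}{\alpha!}(r\zeta)^\alpha\int_0^1(1-t)^{N-n-1}\bigl(D^\alpha\psi\bigr)(rt\zeta)\,dt,
\end{equation*}
which involves only derivatives at the single order $N-n$, evaluated at interior points $rt\zeta$. These are exactly what the $\mathcal C_M$ pointwise bound controls (at every point of the ball, not just at $0$), and after taking up to $n+2$ derivatives in $\zeta$ one lands on orders between $N-n$ and $N+2$, still safely below the monotonicity threshold and still with enough powers of $r$ to give $(2^{n+3}A\eps)^{N-n-2}$. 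This integral-remainder identity, rather than the divergent series tail, is the heart of the estimate, and without it the argument does not close.
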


\begin{proof}
Recall that when $|\zeta|\leq 1$, $r^2{\Psi}_{N, r}(\zeta)=\psi_N(r\zeta)$, which is the Taylor polynomial of $\psi$ at $0$. By Taylor's theorem,
\begin{align*}
	\psi(r\zeta)-\psi_{N-n-1}(r\zeta)=\sum_{|\alpha|=N-n}\tfrac{N-n}{\alpha!}(r\zeta)^{\alpha}\int_0^1(1-t)^{N-n-1} \left(D^{\alpha}\psi\right)(rt\zeta)dt.
\end{align*}
Allow $C(n)$ to be a constant only depending on $n$, which may change in different steps.
Thus, for $r\leq \eps M^{-1/N}$ and sufficiently small $\eps$, we can estimate the difference as
\begin{align*}
\left\|\psi(r\zeta)-\psi_{N-n-1}(r\zeta)\right\|_{C^{n+2}(|\zeta|\leq 1)}
\leq C(n)2^{(n+3)N} (\eps A)^{N-n}\leq C(n)(2^{n+3}\eps A)^{N-n},
\end{align*}
Similarly, $g_{N-n-3,r}(\zeta)=g_{N-n-3}(r\zeta)$ for $|\zeta|\leq 1$, which is the Taylor polynomial of $g$ at $0$, whence
\begin{align*}
\left\|g(r\zeta)-g_{N-n-3,r}(\zeta)\right\|_{C^{n+2}(|\zeta|\leq 1)}\leq C(n)(2^{n+3}A\eps)^{N-n-2}.
\end{align*}
Since $g_{N, r}$ are bounded below and above uniformly on $|\zeta|\leq 1$ by \eqref{gN Uniformly bounded}, we also have the similar estimates on the inverses of $g_{N, r}$ and $g$. That is,
\begin{align*}
\left\|g^{-1}(r\zeta)-g^{-1}_{N-n-3,r}(\zeta)\right\|_{C^{n+2}(|\zeta|\leq 1)}\leq C (2^{n+3}A\eps)^{N-n-2},
\end{align*}
where $C=C(n,g)$ is a constant depending only on the metric $g$ and dimension $n$.
Note that in general, the notation $\Delta_{k,\phi,g}$ denotes a second order differential operator, whose coefficients are polynomials of derivatives (up to second order) of $g, \phi$ and $g^{-1}$, together with $k$ and $k^2$. Thus, for any $w\in C^2(\overline{B(0,1)})$ and any $|\zeta|\leq 1$, we have
\begin{equation*}
\left|\Delta w(\zeta)-\Delta_N w(\zeta)\right|_{C^{n}}\leq Ck^2 (2^{n+3}A\eps)^{N-n-2}|w(\zeta)|_{C^{n+2}}.
\end{equation*}  
\end{proof}

Now define $w(\zeta)=u_r(\zeta):=u(r\zeta)$. Since for any $|\zeta|\leq 1$,
\begin{equation*}
	\Delta u_r(\zeta)=\Delta_{k,\psi(r\zeta),g(r\zeta)} u(r\zeta)=\Delta_{k,\phi(r\zeta),g(r\zeta)}u(r\zeta)=0,
\end{equation*}
by using the interior estimates \eqref{Interior Estimates}, we have for any $|\zeta|\leq \tfrac{1}{2}$,
\begin{equation}\label{Upper Bound of DeltaNu}
|\Delta_N u_r(\zeta)|_{C^{n}}\leq Ck^2 (2^{n+3}A\eps)^{N-n-2}|u_r(\zeta)|_{C^{n+2}}\leq Ck^{2n+5} (2^{n+3}A\eps)^{N-n-2}\|u_r\|_{L^2(B(\zeta,\tfrac{1}{2}))}.
\end{equation}

Recall that $\Delta_N=\Delta_{k r^2,{\Psi}_{N-n-1,r},g_{N-n-3,r}}$, where uniformly for $r\leq \eps M^{-1/N}$, $\sqrt{-1}\del\bdel {\Psi}_{N, r}$ is positive by Lemma \ref{PsiN Uniformly PSH} and $	\|g_{N, r}(\zeta)\|_{C^2(\mathbb{C}^n)}$ is bounded above by \eqref{gN Uniformly bounded}. Performing a standard integration by parts calculation (see Chapter IV in \cite{Ho}), for $\sqrt{\tfrac{\log k}{k}}\leq r\leq \eps  M^{-1/N}$ and any $h\in C^\infty_0(\mathbb{C}^n, \Lambda^{0,1})$,
\begin{equation*}
\left(\Delta_N h, h\right)_{L^2}
\geq \frac{k r^2}{2} \left(h, h\right)_{L^2}.
\end{equation*}
Note that the $L^2$ norm here is induced by the metric $g_{N-n-3}$ on $\mathbb{C}^n$. 

Let $\eta$ be a smooth cut-off function such that $\eta=1$ in $B(0,\tfrac{1}{4})$ and $\supp \eta\subseteq B(0,\tfrac{1}{2})$. Then for the equation
\begin{equation*}
\Delta_N v=\eta\Delta_Nu_r,
\end{equation*}
there exits a solution $v$ such that
\begin{equation}\label{L2 estimates}
\|v\|_{L^2}\leq \frac{2}{k r^2}\|\eta\Delta_Nu_r\|_{L^2}.
\end{equation}
Combining with \eqref{Upper Bound of DeltaNu}, we have
\begin{equation}\label{L2 Bound of v}
\|v\|_{L^2}\leq Ck^{2n+5}(2^{n+3}A\eps)^{N-n-2}\|u_r\|_{L^2(B(0,1))}.
\end{equation}
By the interior estimates \eqref{Interior Estimates} and the Sobolev embedding, we have the pointwise estimates of $v$ in $B(0,\tfrac{1}{4})$. That is,
\begin{align*}
	|v(\zeta)|
	\leq C\|v\|_{H^{n+1}(B(0,\frac{1}{2}))}
	\leq C\left(k^{n+1}\|v\|_{L^2(B(0,\frac{1}{2}))}+k^{n-1}\|\Delta_Nv\|_{H^{n}(B(0,\frac{1}{2}))}\right).
\end{align*} 
Using \eqref{Upper Bound of DeltaNu} and \eqref{L2 estimates}, we have
\begin{align*}
|v(\zeta)|
\leq Ck^{n+1}\|\Delta_N u_r\|_{H^n(B(0,\frac{1}{2}))}
\leq
Ck^{3n+6} (2^{n+3}A\eps)^{N-n-2}\|u_r\|_{L^2(B(\zeta,\tfrac{1}{2}))}.
\end{align*} 
On the other hand, as $\Delta_N(u_r-v)=0$ for $\zeta\in B(0,\tfrac{1}{4})$ and all the data in Remark \ref{uniform hypoanalyticity} are uniform for $\sqrt{\tfrac{\log k}{{k}}}\leq r\leq \eps M^{-1/N}$, by Lemma \ref{lemma hypoanalyticity}, 
\begin{equation*}
\|u_r-v\|_{L^\infty(B(0,\frac{1}{8}))}
\leq Ce^{-bk r^2} \|u_r-v\|_{L^\infty(B(0,\frac{1}{4}))}
\leq Ce^{-bk r^2}\left(1+k^{3n+6}(2^{n+3}A\eps)^{N-n-2}\right)\|u_r\|_{L^\infty(B(0,1))},
\end{equation*}
where $b$ is some positive constant independent from $k, r$ and $N$.
Therefore, 
\begin{align*}
\|u_r\|_{L^\infty(B(0,\frac{1}{8}))}
\leq Ck^{3n+6} e^{-bk r^2}\|u_r\|_{L^\infty(B(0,1))} +Ck^{3n+6}e^{-(N-n-2)\log\left(\frac{1}{2^{n+3}A\eps}\right)}\|u_r\|_{L^\infty(B(0,1))}.
\end{align*}
We restrict $N\geq 2n+4$ and $\eps\leq 2^{2n+6}A^2$ and rename $b$ to be $\min\left\{b,\frac{1}{4}\right\}$. Then,
\begin{align}\label{u bound 1}
\|u_r\|_{L^\infty(B(0,\frac{1}{8}))}
\leq Ck^{3n+6} \left(e^{-bk r^2} +e^{-bN\log\left(\frac{1}{\eps}\right)}\right)\|u_r\|_{L^\infty(B(0,1))},
\end{align}
where $C$ is a positive constant depending on the constant $A$ in \eqref{eq majorant} for $\partial^2\phi$ and $g$, the dimension $n$, the positivity of $\sqrt{-1}\partial\bar{\partial}\phi$ and $g$.

We now set $r=\eps M(N)^{-1/N}$ with $\eps\leq \eps_1$, where $\eps_1$ is sufficiently small so that \eqref{u bound 1} holds, and match the decay rates to obtain
\begin{equation*}
	k r^2=N\log\left(\tfrac{1}{\eps}\right).
\end{equation*}
To get the fastest decay rate, our goal is to maximize $k r^2$ under the above restrictions. 
\begin{lemma}\label{fatest decay}
	Given $k>x_0^2J(x_0)J'(x_0)e^{\frac{2NJ'(x_0)}{J(x_0)}}$, if $r\in (0,\infty)$, $\eps\in (0,\eps_1]$, $N\in [x_0,\infty)$ satisfy
	\begin{equation*}
		r=\eps M(N)^{-1/N}, \quad kr^2=N\log\left(1/\eps\right),
	\end{equation*}
	Then there exists a unique point $(\bar{r},\bar{\eps},\bar{N})$ maximizing $kr^2$ under the above conditions. What is more, the maximum satisfies
	\begin{equation}\label{decay rate comparable}
		C(\eps_1)^{-1}f^2(k)\log k\leq \max (kr^2) \leq f^2(k)\log k,
	\end{equation}
	where $f(k)$ is defined as in \eqref{growth f} and $C(\eps_1)$ is a constant depending on $\eps_1$ (for example we can take $C(\eps_1)=\frac{4\log(1/\eps_1)}{\eps_1^2}$).
\end{lemma}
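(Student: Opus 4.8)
The plan is to reduce this to a one–variable optimization. The two constraints $r=\eps M(N)^{-1/N}=\eps/J(N)$ and $kr^2=N\log(1/\eps)$ let one eliminate $r$ and $\eps$ in favour of $S:=kr^2$ and $N$: one has $\eps=e^{-S/N}$, $r=e^{-S/N}/J(N)$, and feeding this back into $S=kr^2$ produces the single relation
\begin{equation*}
S\,J(N)^2\,e^{2S/N}=k .
\end{equation*}
For each fixed $N\ge x_0$ the left-hand side is strictly increasing in $S$ from $0$ to $\infty$, so it determines a unique $S=S(N)>0$, and the requirement $\eps\le\eps_1$ becomes $S(N)/N\ge t_1:=\log(1/\eps_1)$. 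So the problem is to maximize the smooth function $S(N)$ over $\{N\ge x_0:\ S(N)\ge Nt_1\}$ and to compare the maximum with $f^2(k)\log k$.

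First I would find the interior critical points. Implicit differentiation of the relation gives $S'(N)=\dfrac{2S}{N^{2}(1+2S/N)}\,h(N)$, where $h(N):=S(N)-N^{2}J'(N)/J(N)$; hence $S'(N)=0$ forces $S(N)=N^{2}J'(N)/J(N)$, and substituting this into the relation gives exactly $N^{2}J(N)J'(N)e^{2NJ'(N)/J(N)}=k$, i.e. $N=N(k)$, the unique solution of \eqref{solution N} for $k>k_0$ (using that $x\mapsto x^{2}J(x)J'(x)e^{2xJ'(x)/J(x)}$ is strictly increasing from $x_0$, cf. Lemma \ref{G is going to infinity} and \eqref{k0}). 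At $N=N(k)$ we then read off $S(N(k))=N(k)^{2}J'(N(k))/J(N(k))=f^{2}(k)\log k$ by \eqref{growth f}. That $N(k)$ is a maximum is where the strict log-convexity of $M$ enters: by \eqref{alpha increasing} the function $x\mapsto x^{2}J'(x)/J(x)$ is strictly increasing, so $h'(N(k))=S'(N(k))-(x^{2}J'/J)'\big|_{N(k)}<0$; since the zeros of $h$ are precisely the solutions of \eqref{solution N}, $h$ can vanish only at $N(k)$, so $h>0$ on $(x_0,N(k))$ and $h<0$ on $(N(k),\infty)$, whence $S$ increases on $(x_0,N(k)]$ and decreases beyond it. As $S(N)\le k/J(N)^{2}\to0$ when $N\to\infty$ (because $e^{2S/N}\ge1$ and $J$ is unbounded), $S$ is unimodal with global maximum $f^{2}(k)\log k$; in particular $\max(kr^{2})\le f^{2}(k)\log k$, the upper bound in \eqref{decay rate comparable}.

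Then I would reinstate the constraint $\eps\le\eps_1$. From the formula for $S'$ one checks, using $J'>0$ on $(x_0,\infty)$, that $N\mapsto S(N)/N$ is strictly decreasing, so — provided $k$ is large enough that $S(x_0)/x_0>t_1$ — the admissible set is an interval $[x_0,N_{\max}]$ with $S(N_{\max})/N_{\max}=t_1$. If $N(k)\le N_{\max}$, equivalently $N(k)J'(N(k))/J(N(k))\ge t_1$, the interior optimizer is admissible, $\max(kr^{2})=f^{2}(k)\log k$, and \eqref{decay rate comparable} holds trivially. Otherwise $N_{\max}<N(k)$, so $S$ is increasing throughout $[x_0,N_{\max}]$ and its maximum is attained at $N_{\max}$, where $\eps=\eps_1$ and $S(N_{\max})=N_{\max}t_1$. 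Rewriting the relation at $N_{\max}$ gives $S(N_{\max})=k\eps_1^{2}/J(N_{\max})^{2}$, while $f^{2}(k)\log k=S(N(k))=k/\big(J(N(k))^{2}e^{2N(k)J'(N(k))/J(N(k))}\big)\le k/J(N_{\max})^{2}$ since $J$ is increasing and $N_{\max}<N(k)$. Hence $\max(kr^{2})=S(N_{\max})\ge\eps_1^{2}f^{2}(k)\log k\ge C(\eps_1)^{-1}f^{2}(k)\log k$ with $C(\eps_1)=4\log(1/\eps_1)/\eps_1^{2}$, which is valid because $4\log(1/\eps_1)\ge1$ for the small $\eps_1$ in use.

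The interior computation is routine: after the substitution $S=kr^{2}$, $\eps=e^{-S/N}$, implicit differentiation yields \eqref{solution N} almost mechanically, and the only structural ingredient — that the critical point is a unique maximum — is precisely the strict convexity of $\log M$ via \eqref{alpha increasing}. I expect the main obstacle to be the bookkeeping for the inequality constraint $\eps\le\eps_1$: verifying that the admissible set is indeed the interval $[x_0,N_{\max}]$ (which needs monotonicity of $S(N)/N$ together with $k$ being large) and then comparing the boundary optimizer with the interior one using only that $J$ is increasing. This boundary case is what prevents an exact identity and accounts for the constant $C(\eps_1)$ in \eqref{decay rate comparable}.
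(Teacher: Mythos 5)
Your proof is correct, and it takes a genuinely different route from the paper's. The paper parametrizes the one-parameter constraint set by $\eps$: solving $\tfrac{k\eps^2}{\log(1/\eps)}=g(N)$ with $g(N)=NJ(N)^2$ gives $N=N(\eps)=g^{-1}(\cdot)$, and one maximizes $h(\eps)=N(\eps)\log(1/\eps)$; the critical-point condition is $\log(1/\eps)=NJ'/J$, which reduces to \eqref{solution N}, and the second-derivative check $h''(\eps(k))<0$ is a somewhat laborious computation using \eqref{log convex}. For the boundary case $\eps(k)>\eps_1$ the paper proves $h(\eps(k))\le C(\eps_1)h(\eps_1)$ via convexity of $g$ and concavity of $g^{-1}$ (Lemma~\ref{g properties}). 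You instead parametrize by $N$, letting $S(N):=kr^2$ be determined by $S J(N)^2 e^{2S/N}=k$; the critical-point condition $S=N^2J'/J$ again reduces to \eqref{solution N}, but you replace the second-derivative computation by a cleaner sign argument: $S'$ has the sign of $h(N)=S(N)-N^2J'(N)/J(N)$, whose only zero is $N(k)$ with $h'(N(k))=-(N^2J'/J)'|_{N(k)}<0$ by \eqref{alpha increasing}, so $S$ is unimodal with peak at $N(k)$. Your boundary comparison is also more elementary: from $S(N_{\max})=k\eps_1^2/J(N_{\max})^2$ and $S(N(k))=k/(J(N(k))^2e^{2N(k)J'/J})\le k/J(N_{\max})^2$ (using only $J$ increasing and $e^{2S/N}\ge1$) you get $\max(kr^2)\ge\eps_1^2f^2(k)\log k$, which is actually a slightly sharper constant than the paper's $C(\eps_1)^{-1}=\eps_1^2/(4\log(1/\eps_1))$ and does not need the concavity lemma at all. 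Both approaches implicitly require $k$ large enough for the admissible set (equivalently for $\eps_0(k)<\eps_1$, or $S(x_0)/x_0>\log(1/\eps_1)$ in your notation) to be nonempty; you flag this correctly.
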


Now we use this lemma to finish the proof of Lemma \ref{improved estimates} and the proof of Lemma \ref{fatest decay} is delayed to the next section. 

Since $(\bar{r},\bar{\eps},\bar{N})$ is the point where $k r^2$ is attaining the maximum, \eqref{decay rate comparable} implies
\begin{equation*}
	C(\eps_1)^{-1/2}f(k)\sqrt{\tfrac{\log k}{k}}\leq \bar{r}\leq f(k)\sqrt{\tfrac{\log k}{k}}. 
\end{equation*} 
Plugging $r=\delta \bar{r}$ into \eqref{u bound 1} for given $\delta\in (0,1)$ and using the fact that $\lim_{k\rightarrow \infty}f(k)=+\infty$ by Lemma \ref{f infinity}, we have
\begin{align*}
\|u\|_{L^\infty(B(0,\frac{\delta}{8C(\eps_1)^{1/2}}f(k)\sqrt{\frac{\log k}{k}})}
&\leq Ck^{3n+6}\left(e^{-b\delta^2\max (kr^2)}+e^{-b\max (kr^2)}\right) \|u\|_{L^\infty(B(0,\delta f(k)\sqrt{\frac{\log k}{k}}))}
\\
&\leq C e^{-\frac12 b\delta^2 f^2(k)\log k}\|u\|_{L^\infty(B(0,\delta f(k)\sqrt{\frac{\log k}{k}}))}.
\end{align*}
Here $C$ is a positive constant depending on the constant $A$ in \eqref{eq majorant} for $\partial^2\phi$ and $g$, the dimension $n$, the positivity of $\sqrt{-1}\partial\bar{\partial}\phi$ and $g$. We then rename $\frac12 b\delta^2$ by $b$. The result follows by a covering argument.

\subsection{The optimization problem for $f(k)$}\label{f(k)}
In this section we prove Lemmas \ref{fatest decay} and \ref{f infinity}. 

Note 
\begin{equation*}
k r^2=k \eps^2 M^{-2/N}=N\log\left(\tfrac{1}{\eps}\right).
\end{equation*} 
This can be written as
\begin{equation}\label{constraint}
	\frac{k\eps^2}{\log(\tfrac{1}{\eps})}=N M^{2/N}:=g(N).
\end{equation}
By Lemma \ref{g properties 1}, $g(N)=NM^{2/N}=NJ^2(N)$ is strictly increasing for $N\geq x_0$. We set $N\geq x_0$ from now on. Equivalently, that means 
\begin{equation*}
	\frac{k\eps^2}{\log\left(\frac{1}{\eps}\right)}=g(N)\geq g(x_0). 
\end{equation*}
Let $\eps_0(k)$ be the only solution to $\frac{k\eps^2}{\log\left(\frac{1}{\eps}\right)}=g(x_0)$. Then given $\eps\geq \eps_0(k)$ we can solve $N=N(\eps)=g^{-1}\left(\frac{k\eps^2}{\log\left(\frac{1}{\eps}\right)}\right)$. For $\eps\in [\eps_0(k),1)$, we set
\begin{equation*}
h(\eps)=N(\eps)\log\left(\frac{1}{\eps}\right).
\end{equation*}

Then $h$ satisfies the following property.
\begin{lemma}\label{property h}
	Given $k>0$, $h(\eps)$ has a unique critical point in $(\eps_0(k),1)$, denoted by $\eps(k)$. $h$ is strictly increasing in $(0, \eps(k))$ and strictly decreasing in $(\eps(k),1)$.
\end{lemma}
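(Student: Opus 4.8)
The plan is to reparametrize by $N$ rather than $\eps$ and to reduce the whole statement to the monotonicity of
\[
G(N):=N^2J(N)J'(N)\,e^{2NJ'(N)/J(N)},
\]
which is precisely what Lemma~\ref{G is going to infinity} provides. First I would record that, by Lemma~\ref{g properties 1}, $g(N)=NJ(N)^2$ is $C^1$, strictly increasing on $[x_0,\infty)$ (indeed $g'=J(J+2NJ')>0$ there) with $g(N)\to\infty$; since $\eps\mapsto \frac{k\eps^2}{\log(1/\eps)}$ is strictly increasing from $g(x_0)$ to $+\infty$ on $[\eps_0(k),1)$, the constraint \eqref{constraint} defines a strictly increasing $C^1$ bijection $\eps\mapsto N(\eps)$ from $[\eps_0(k),1)$ onto $[x_0,\infty)$. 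As this change of variable is monotone it transports critical points and monotonicity intervals faithfully, so it suffices to analyze $h$ as a function of $N$.

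I would then set $t=t(N):=\log(1/\eps(N))>0$, so the constraint becomes $ke^{-2t}=t\,g(N)$; since $s\mapsto e^{-2s}/s$ is strictly decreasing on $(0,\infty)$ this determines $t(N)$ uniquely, and implicit differentiation of $\log k-2t=\log t+\log g(N)$ gives $t'(N)=-t\,g'(N)/\big((2t+1)g(N)\big)<0$. A short computation with $h(N)=N\,t(N)$, using the identity $\tfrac{Ng'(N)}{g(N)}=1+\tfrac{2NJ'(N)}{J(N)}$ (immediate from $g=NJ^2$), collapses the derivative to
\[
h'(N)=\frac{2t(N)}{2t(N)+1}\left(t(N)-\frac{NJ'(N)}{J(N)}\right),
\]
so $\sgn h'(N)=\sgn\big(t(N)-\beta(N)\big)$, where $\beta(N):=NJ'(N)/J(N)\ge 0$.

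The key observation I would highlight is elementary: for fixed $N$ the map $q(s):=s\,g(N)\,e^{2s}$ is strictly increasing on $[0,\infty)$, and it satisfies simultaneously $q(t(N))=k$ (this is just the constraint) and $q(\beta(N))=\beta(N)\cdot NJ^2\cdot e^{2\beta(N)}=G(N)$. Hence $t(N)\gtrless\beta(N)$ exactly when $k\gtrless G(N)$, i.e. $\sgn h'(N)=\sgn\big(k-G(N)\big)$. Now I would apply Lemma~\ref{G is going to infinity}: $G$ is strictly increasing on $[x_0,\infty)$ from $G(x_0)=k_0$ to $+\infty$, and since $x_0$ is (by Lemma~\ref{g properties 1}/Definition~\ref{x0} and continuity of $J'$) the point where $J'$ changes sign, $J'(x_0)=0$ and thus $k_0=0$; hence for every $k>0$ there is a unique $N^\ast=N(k)\in(x_0,\infty)$ with $G(N^\ast)=k$, with $h'>0$ on $(x_0,N^\ast)$ and $h'<0$ on $(N^\ast,\infty)$. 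Transporting back through the increasing bijection $\eps\mapsto N(\eps)$ yields the unique critical point $\eps(k):=\eps(N^\ast)\in(\eps_0(k),1)$, with $h$ strictly increasing on $(\eps_0(k),\eps(k))$ and strictly decreasing on $(\eps(k),1)$, as claimed. (Along the way this also recovers \eqref{solution N}: at $N^\ast$ one has $t(N^\ast)=\beta(N^\ast)$, and feeding this back into $ke^{-2t}=t\,g(N)$ gives $k=N^{\ast 2}J(N^\ast)J'(N^\ast)e^{2N^\ast J'(N^\ast)/J(N^\ast)}$, so the $N^\ast$ above is the $N(k)$ of Theorem~\ref{main}.)

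The step I expect to be most delicate is the bookkeeping in the middle two paragraphs — in particular, spotting the substitution $q(s)=s\,g(N)e^{2s}$ that puts $k$ and $G(N)$ on equal footing and so reduces everything to the monotonicity of $G$ (already in hand via Lemma~\ref{G is going to infinity}). The tempting alternative — proving directly that $\beta(N)=NJ'(N)/J(N)$ is monotone on $(x_0,\infty)$ — does not seem to go through: strict log-convexity of $M$ only yields $2\tfrac{J'}{J}+x\big(\tfrac{J'}{J}\big)'>0$ and $\big(x^2\tfrac{J'}{J}\big)'>0$, neither of which forces $\beta'\ge 0$ there. The point is that this monotonicity of $\beta$ is not actually needed.
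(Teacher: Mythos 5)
Your proposal is correct in substance but takes a genuinely different route from the paper. The paper differentiates $h$ implicitly in $\eps$, solves $h'(\eps)=0$ to extract the critical-point equation \eqref{solution N}, invokes Lemma~\ref{G is going to infinity} for uniqueness, and then verifies $h''(\eps(k))<0$ by a rather involved second-derivative computation before concluding the monotonicity pattern. You instead reparametrize in $N$, obtain the clean identity
\[
h'(N)=\frac{2t(N)}{2t(N)+1}\bigl(t(N)-\beta(N)\bigr),
\]
and make the key observation that both $k$ and $G(N)$ are values of the same strictly increasing map $q(s)=s\,g(N)e^{2s}$ evaluated at $t(N)$ and $\beta(N)$ respectively, so that $\sgn h'(N)=\sgn\bigl(k-G(N)\bigr)$. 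This collapses the whole question to the monotonicity of $G$, which is exactly Lemma~\ref{G is going to infinity}, and avoids any second-derivative bookkeeping. I checked the identities $q(t(N))=k$ and $q(\beta(N))=N^2JJ'e^{2NJ'/J}$, and the computation of $h'(N)$ using $Ng'/g=1+2\beta$; they are correct. This is a cleaner argument that delivers the global sign of $h'$ directly rather than inferring it from uniqueness of the critical point plus a local concavity check.

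One genuine error, though a localized one: you assert that $J'(x_0)=0$ and hence $k_0=0$, justifying it by ``continuity of $J'$ at the sign change.'' This fails in general. In the Gevrey case $M(N)=N^{(s-1)N}$ one has $J(N)=N^{s-1}$ and $J'>0$ everywhere; Definition~\ref{x0} then forces $x_0=1$ (as the paper notes explicitly in Subsection~\ref{2ndanalytic}), so $J'(x_0)=s-1>0$ and $k_0=(s-1)e^{2s-2}>0$. Consequently one cannot conclude that a solution $N^\ast$ exists for all $k>0$: for $k\leq G(x_0)$ one has $G(N)>k$ on $(x_0,\infty)$, so $h'<0$ throughout and there is no interior critical point. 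The correct range is $k>k_0=G(x_0)$. This does not affect the structure of your argument, and for what it is worth the paper's own proof carries the same restriction (it establishes the statement only for $k>\tfrac12 g(x_0)G(x_0)e^{G(x_0)}$) even though the lemma is stated for ``$k>0$''; you should simply drop the $J'(x_0)=0$ claim rather than try to extend to all $k>0$.

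Two minor notational points you may want to settle when writing this up cleanly: the paper's $G$ in Lemma~\ref{G is going to infinity} is $2\beta$, and the monotone quantity there is $g\cdot G\cdot e^G = 2N^2JJ'e^{2NJ'/J}$, i.e.\ twice your $G(N)$; the scalar factor is of course irrelevant to monotonicity, but worth flagging to avoid confusion. Also, the lemma claims monotonicity on $(0,\eps(k))$, but $h$ is only defined (via the constraint with $N\geq x_0$) on $(\eps_0(k),1)$; your statement correctly restricts to $(\eps_0(k),\eps(k))$, which is what is actually meant.
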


\begin{proof}
	We compute the critical point of $h$, 
	\begin{equation*}
	h'(\eps)=N'(\eps)\log\left(\frac{1}{\eps}\right)-\frac{N}{\eps}.
	\end{equation*} 
	On the other hand, if we rewrite \eqref{constraint} into
	\begin{equation*}
	k\eps^2=g(N)\log\left(\frac{1}{\eps}\right),
	\end{equation*}
	and take the derivative, we get
	\begin{equation*}
	2k\eps =g'(N)N'(\eps)\log\left(\frac{1}{\eps}\right)-\frac{g(N)}{\eps}.
	\end{equation*}
	Therefore,
	\begin{equation}\label{derivative of N}
	N'(\eps)\log\left(\frac{1}{\eps}\right)=\frac{2k\eps +\frac{g(N)}{\eps}}{g'(N)},
	\end{equation}
	and $h'(\eps)$ turns into
	\begin{equation}\label{first derivative}
	h'(\eps)=\frac{2k\eps +\frac{M^2}{\eps}}{g'(N)}-\frac{g(N)}{\eps}
	=\frac{2\log\left(\frac{1}{\eps}\right)g(N)+g(N)-Ng'(N)}{\eps g'(N)}.
	\end{equation}
	If we set $h'(\eps)=0$, then the critical point satisfies
	\begin{equation*}
	\log\left(\frac{1}{\eps}\right)=\frac{Ng'(N)-g(N)}{2g(N)}=\frac{NJ'(N)}{J(N)}.
	\end{equation*}
	Plugging this back into \eqref{constraint}, 
	\begin{equation} \label{eq to solve N}
	k=g\cdot \frac{Ng'(N)-g(N)}{2g(N)} \cdot e^{\frac{Ng'(N)-g(N)}{g(N)}}=N^2J(N)J'(N)e^{\frac{2NJ'}{J}}.
	\end{equation}
We will show that the function on the right side is strictly increasing to infinity when $N>x_0$.
\begin{lemma}\label{G is going to infinity}
	Let $G(x)=\frac{xg'(x)-g(x)}{g(x)}$ for $x\in \mathbb{R}_{>0}$ and let $x_0$ be defined by Definition \ref{x0}. If $\log M(x)$ is strictly convex and $M(x)^{\frac{1}{x}}$ is unbounded, then $g(x)\cdot G(x)\cdot e^{G(x)}$ is strictly increasing on $(x_0,\infty)$ and converges to $+\infty$ as $x\rightarrow \infty$.
	\end{lemma}
\begin{proof}
	Recall that $g(x)= x M^{\frac{2}{x}}(x)$ and thus $x\log(g(x))-x\log x$ is strictly convex. By a straightforward computation,
	\begin{equation}\label{g log convex}
	\left(x\log(g(x))-x\log x\right)''=2\frac{g'(x)}{g(x)}+x\frac{g''(x)g(x)-(g'(x))^2}{g^2(x)}-\frac{1}{x}>0.
	\end{equation}

	By Lemma \ref{g properties 1}, $\frac{g(x)}{x}$ is strictly increasing on $(x_0,\infty)$. It implies that for $x>x_0$,
	\begin{equation*}
	\left(\frac{g(x)}{x}\right)'=\frac{xg'(x)-g(x)}{x^2}>0,
	\end{equation*}
	and thus $G(x)>0$ for $x>x_0$.	
	Next we compute the derivative of $G$:
	\begin{equation*}
	G'(x)=\frac{g'(x)}{g(x)}+x\frac{g''(x)g(x)-\left(g'(x)\right)^2}{g^2(x)}.
	\end{equation*}
	So \eqref{g log convex} rewrites into 
	\begin{equation*}
	G'(x)-\frac{1}{x}G(x)=G'(x)+\frac{g'(x)}{g(x)}-\frac{1}{x}>0.
	\end{equation*}
	In order to show that $\ln g(x)+\ln G(x)+G(x)$ is increasing, we compute its derivative:
	\begin{align*}
	\left(\ln g(x)+\ln G(x)+G(x)\right)'
	=\frac{g'(x)}{g(x)}+\frac{G'(x)}{G(x)}+G'(x)>\frac{G'(x)}{G(x)}+\frac{1}{x}>0.
	\end{align*}
	Now that $g(x)\cdot G(x)\cdot e^{G(x)}$ is strictly increasing for $x>x_0$, it remains to check $g(x)\cdot G(x)\cdot e^{G(x)}$ is unbounded. Assume not. Then there exists some constant $C>0$ such that
	\begin{equation*}
	xg'(x)-g(x)=g(x)\cdot G(x)\leq C,
	\end{equation*}
	which can be rewritten into
	\begin{equation*}
	\left(\frac{g(x)}{x}+\frac{C}{x}\right)'\leq 0,
	\end{equation*}
	which is against the assumption that $\frac{g(x)}{x}=M(x)^{2/x}$ is unbounded.
	So the result follows.
\end{proof}
Now that the right side of \eqref{eq to solve N} is strictly increasing in $N$, for any $k>\frac{1}{2}g(x_0)G(x_0)e^{G(x_0)}$, there exists a unique solution $N(k)>x_0$ as in \eqref{solution N}. And by using the relation $\frac{k \eps^2}{\log \frac{1}{\eps}}=g(N)$ in \eqref{constraint}, we can further solve $\eps(k)\in(0,1)$, the critical point of $h(\eps)$. It is clear that $\eps(k)\in (\eps_0(k),1)$ by the fact $g(N(k))>g(x_0)$. 

Since $h(\eps)$ has a unique critical point in $(\eps_0(k),1)$, it remains to check $h$ attains a local max at $\eps(k)$. By taking one more derivative of \eqref{first derivative},
\begin{align*}
h''(\eps)
=&-\frac{2\log\left(\frac{1}{\eps}\right)+3}{\eps^2}\frac{g}{g'}+\frac{2\log\left(\frac{1}{\eps}\right)+1}{\eps}\frac{\left(g'\right)^2-gg''}{(g')^2}-\frac{N'(\eps)}{\eps}+\frac{N}{\eps^2}\\
=&-\frac{2\log\left(\frac{1}{\eps}\right)+3}{\eps^2}\frac{g}{g'}+\frac{2\log\left(\frac{1}{\eps}\right)+1}{\eps}\frac{\left(g'\right)^2-gg''}{(g')^2}-\frac{1}{\eps^2\log\left(\frac{1}{\eps}\right)}\frac{2g\log\left(\frac{1}{\eps}\right)+g}{g'}+\frac{N}{\eps^2}.
\end{align*} 
Using \eqref{log convex},
\begin{equation*}
h''(\eps)\leq -\frac{2\log\left(\frac{1}{\eps}\right)+3}{\eps^2}\frac{g}{g'}+\frac{2\log\left(\frac{1}{\eps}\right)+1}{\eps}\left(\frac{2g'}{g}-\frac{1}{N}\right)\left(\frac{g}{g'}\right)^2\frac{1}{N}-\frac{2\log\left(\frac{1}{\eps}\right)+1}{\eps^2\log\left(\frac{1}{\eps}\right)}\frac{g}{g'}+\frac{N}{\eps^2}.
\end{equation*}
Since at the critical point 
\begin{equation*}
\frac{g(\eps(k))}{g'(\eps(k))}=\frac{N}{2\log\left(\frac{1}{\eps(k)}\right)+1},
\end{equation*} 
it follows that
\begin{align*}
h''(\eps(k))\leq -\frac{2}{\eps(k)^2}\frac{N}{2\log\left(\frac{1}{\eps(k)}\right)+1}+\frac{2}{\eps(k)}-\frac{1}{\eps(k)\left(2\log\left(\frac{1}{\eps(k)}\right)+1\right)}-\frac{N}{\eps(k)^2\log\left(\frac{1}{\eps(k)}\right)}.
\end{align*}
Thus $h''(\eps(k))<0$ by noting the fact $$\max_{0<\eps<1}\eps\log\left(\frac{1}{\eps}\right)=\frac{1}{e}<\frac{1}{2}.$$
So the function $h$ is strictly increasing before the critical point $\eps(k)$ and is strictly decreasing afterwards. 
\end{proof}

As $\eps$ is actually contained in $(\eps_0(k),\eps_1]$ under our constraint, there are two cases for the maximum of $h(\eps)$. The first case is when $\eps(k)\leq \eps_1$. Then 
$$\max_{\eps\in (\eps_0(k),\eps_1]}h(\eps)
=N(k)\log\left(\frac{1}{\eps(k)}\right).$$
The second case is when $\eps(k)>\eps_1$ and   
the maximum of $h(\eps)$ is attained at $\eps_1$ instead. But in this case, $h(\eps_1)$ is actually comparable to $h(\eps(k))$. To be precise, we claim that there exists some constant $C(\eps_1)$ depending on $\eps_1$ such that
\begin{equation}\label{h comparable}
	h(\eps_1)\leq h(\eps(k))\leq C(\eps_1)h(\eps_1).
\end{equation}
The first inequality is clear by Lemma \ref{property h}. To prove the second one, we need the following lemma on $g$.
\begin{lemma}\label{g properties}
	If $\log M(x)$ is strictly convex and $M(x)^{\frac{1}{x}}$ is unbounded, then
	\begin{itemize}
		\item[(a)] $g(x)$ is strictly convex on $\mathbb{R}_{>0}$.
		\item[(b)] For any $y\geq 1$ and any $k\geq g(x_0)$,
				\begin{equation*}
				\frac{1}{y}g^{-1}(k y)\leq g^{-1}(2k). 
				\end{equation*}
	\end{itemize}
\end{lemma}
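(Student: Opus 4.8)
For part (a), the strict convexity of $g(x) = x M(x)^{2/x} = x J(x)^2$, the plan is to work from the strict convexity of $\log M(x)$, equivalently of $x\log J(x)$. Writing $L(x) = \log g(x) = \log x + 2\log J(x)$, I would instead prove that $x\log g(x) - x\log x = 2x\log J(x)$ is strictly convex (this is just $2$ times the hypothesis, so it holds), which gives inequality \eqref{g log convex} already displayed in the excerpt: $2\frac{g'}{g} + x\frac{g''g-(g')^2}{g^2} - \frac1x > 0$. To pass from this to $g'' > 0$, I would rearrange: $x(g''g - (g')^2) > g^2/x - 2g g'$, i.e.\ $x g'' g > (g')^2 x + g^2/x - 2 g g' = \big(g'\sqrt{x} - g/\sqrt{x}\big)^2 \cdot \text{(something)}$; more carefully, $x^2 g'' g > (x g' - g)^2 \geq 0$, so $g'' > 0$ since $g > 0$. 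Wait — I must double-check the algebra of \eqref{g log convex}: multiplying through by $\tfrac{g^2}{x}$ (positive) gives $2 g g'/x \cdot \tfrac{?}{}$... I will just multiply \eqref{g log convex} by $x g^2 > 0$ to get $2 x g g' + x^2(g'' g - (g')^2) - g^2 > 0$, i.e.\ $x^2 g'' g > g^2 - 2xgg' + x^2(g')^2 = (g - x g')^2 \geq 0$, hence $g'' > 0$ on all of $\mathbb{R}_{>0}$. This is clean and uses only the hypothesis, with no reference to $x_0$, which matches the stated domain $\mathbb{R}_{>0}$.

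For part (b), I want $\tfrac1y g^{-1}(ky) \leq g^{-1}(2k)$ for all $y \geq 1$ and $k \geq g(x_0)$. Equivalently, setting $a = g^{-1}(ky)$ (so $g(a) = ky$ and $a \geq x_0$ since $ky \geq k \geq g(x_0)$ and $g$ is increasing past $x_0$), I must show $g(a/y) \geq 2k = 2g(a)/y$, i.e.\ $y\, g(a/y) \geq 2 g(a)$ for $a \geq x_0$, $y \geq 1$. By convexity of $g$ (part (a)) together with... hmm, convexity alone gives $g(a/y) \geq g(a) + g'(a)(a/y - a)$ which is the wrong direction. The right tool is the superadditivity-type bound coming from $\tfrac{g(x)}{x}$ being increasing on $(x_0,\infty)$ — wait, that's also the wrong direction for $a/y \le a$. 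Let me reconsider: I actually want to compare $g$ at the smaller argument $a/y$. The correct estimate should use that $g(x)/x^2$-type behavior or rather the function $G$ from Lemma \ref{G is going to infinity}. Since $xg'(x) - g(x) = g(x) G(x)$ and $G$ is increasing and positive on $(x_0,\infty)$, and since $g(x)/x$ is increasing, I have for $x \geq x_0$ that $\frac{d}{dx}\log\frac{g(x)}{x} = \frac{G(x)}{x} \leq \frac{G(x_0 \vee \cdot)}{\cdot}$... This suggests the cleanest route: show $\frac{g(\lambda x)}{g(x)} \leq \lambda^{p}$ for $\lambda \geq 1$ and a suitable doubling-type inequality, or directly: $y g(a/y) \geq 2 g(a)$. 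Set $t = 1/y \in (0,1]$. I want $g(ta)/t \geq 2g(a)$, i.e.\ $g(ta) \geq 2t\, g(a)$. Since $g$ is convex with $g(0) = 0$ (as $g(x) = xJ(x)^2 \to 0$... actually need $J$ bounded near $0$, which I should verify, or at least $g(x)\to 0$), convexity gives $g(tx) \leq t g(x)$ for $t \in [0,1]$ — again the wrong direction!

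So convexity of $g$ with $g(0)=0$ gives $g(ta) \le t g(a)$, the \emph{opposite} of what part (b) claims, unless I'm misreading. Let me recheck: part (b) says $\tfrac1y g^{-1}(ky) \le g^{-1}(2k)$. With $b := g^{-1}(2k)$, this is $g^{-1}(ky) \le y\, b$, i.e., applying $g$: $ky \le g(yb)$, i.e., $g(yb) \ge y \cdot k = y \cdot g(b)/2$, i.e., $g(yb) \ge \tfrac{y}{2} g(b)$ for $y \ge 1$. Now this \emph{is} consistent with superlinearity: for convex $g$ with $g(0)=0$, $\frac{g(x)}{x}$ is increasing, so $\frac{g(yb)}{yb} \ge \frac{g(b)}{b}$, giving $g(yb) \ge y g(b) \ge \tfrac{y}{2}g(b)$. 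That's it — and the factor $2$ is slack, the real content is just $g(yb)\ge y\,g(b)$, which follows from $g$ convex and $g(0)=0$. The hard part will be justifying $g(0) = 0$ (or more precisely that $g(x)/x$ is increasing on the relevant range): since $g(x) = xJ(x)^2$ and $J = M^{1/x}$, I need $J(0^+)$ finite, which holds for reasonable majorants but should be stated; alternatively, restrict attention to $x \ge x_0$ where Lemma \ref{g properties 1} already gives $g(x)/x = J(x)^2$ strictly increasing, and note $g^{-1}(2k), g^{-1}(ky) \ge x_0$ whenever $k \ge g(x_0)$ and $y\ge 1$, so the increasing-ratio argument applies directly on $[x_0,\infty)$ without needing any behavior at $0$. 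That is the route I would take: everything reduces to "$g(x)/x$ increasing on $[x_0,\infty)$", which is Lemma \ref{g properties 1}, plus the trivial estimate $y \ge y/2$.
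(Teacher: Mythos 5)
Your part (a) is essentially the paper's argument: both multiply (or divide) the logarithmic-convexity inequality \eqref{g log convex} through by a positive quantity and recognize the resulting right-hand side as the perfect square $(xg'-g)^2$ (the paper writes it as $(g'/g - 1/x)^2$), yielding $g''>0$ on all of $\mathbb{R}_{>0}$.

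Your part (b), however, takes a genuinely different route. The paper's proof uses part (a): since $g$ is convex and increasing on $(x_0,\infty)$, $g^{-1}$ is concave on $(g(x_0),\infty)$, and the bound $\tfrac1y g^{-1}(ky)\le g^{-1}(2k)$ is extracted by writing $\tfrac1y g^{-1}(ky)$ as one term of a convex combination (the other term being $\tfrac{y-1}{y}g^{-1}(g(x_0))$), invoking Jensen for the concave $g^{-1}$, and then using $k\ge g(x_0)$ to bound the argument by $2k$. You instead bypass the convexity of $g$ entirely: you reduce the claim (with $b=g^{-1}(2k)\ge x_0$) to $g(yb)\ge\tfrac{y}{2}g(b)$, and derive $g(yb)\ge y\,g(b)$ directly from the monotonicity of $g(x)/x=J(x)^2$ on $[x_0,\infty)$, which is already Lemma \ref{g properties 1}. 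Both proofs are correct. Yours is more elementary in that (b) becomes independent of (a), and it actually yields the sharper statement $\tfrac1y g^{-1}(ky)\le g^{-1}(k)$ (the factor $2$ is slack, as you noted). The paper's concavity argument is perhaps a more standard move given that (a) has just been proved, and it does not require isolating the increasing-ratio property. One small caution worth making explicit in a write-up: you need $g$ to be strictly increasing, hence invertible, on $[x_0,\infty)$ before applying $g^{-1}$; this follows since $g(x)=x\,J(x)^2$ with both factors increasing there.
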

\begin{proof}
	Recall \eqref{g log convex} and rewrite it into
	\begin{equation*}
		\frac{g''(x)}{g(x)}>\left(\frac{g'(x)}{g(x)}\right)^2-\frac{2}{x}\,\frac{g'(x)}{g(x)}+\frac{1}{x^2}=\left(\frac{g'(x)}{g(x)}-\frac{1}{x}\right)^2.
	\end{equation*}
	Thus $(a)$ follows immediately.
	
	Now we prove $(b)$. Since $g(x)$ is increasing for $x\in (x_0,\infty)$ by Lemma \ref{g properties 1} and convex by $(a)$, the inverse function $g^{-1}(x)$ is concave on $(g(x_0),\infty)$. For any $k\geq g(x_0)$ and $y\geq 1$, we have
	\begin{equation*}
		\frac{1}{y}g^{-1}\left(k y\right)
		\leq \frac{1}{y}g^{-1}\left(k y\right)+\frac{y-1}{y}g^{-1}(g(x_0))
		\leq g^{-1}\left(\frac{yk +(y-1)g(x_0)}{y}\right)
		\leq g^{-1}(2k).
	\end{equation*}
	So we obtain $(b)$.
\end{proof}

Now we prove \eqref{h comparable}. If $\eps(k)\in (\eps_1,e^{-1})$, then
\begin{equation*}
h(\eps(k))=\log\left(1/\eps(k)\right)g^{-1}\left(\frac{k\eps(k)^2}{\log\left(1/\eps(k)\right)}\right)
\leq
\log\left(1/\eps_1\right) g^{-1}(k),
\end{equation*}
and Lemma \ref{g properties} gives that for any $k\geq \frac{2g(x_0)\log(1/\eps_1)}{\eps_1^2}$,
\begin{equation*}
	g^{-1}(k)\leq \frac{2\log(1/\eps_1)}{\eps_1^2}g^{-1}\left(\frac{k\eps^2_1}{\log\left(1/\eps_1\right)}\right).
\end{equation*}
Therefore, \eqref{h comparable} follows by setting $C(\eps_1)=\frac{2\log(1/\eps_1)}{\eps_1^2}$.

On the other hand, if $\eps(k)\in(e^{-1},1)$, then using Lemma \ref{g properties} again, for any $k\geq \frac{2\log(1/\eps_1)}{\eps_1^2}$, we have
\begin{equation*}
	h(\eps(k))
	\leq
	g^{-1}\left(2k\right)
	\leq \frac{4\log(1/\eps_1)}{\eps_1^2} g^{-1}\left(\frac{k\eps_1^2}{\log\left(1/\eps_1\right)}\right),
\end{equation*}
and thus \eqref{h comparable} follows by setting $C(\eps_1)=\frac{4}{\eps_1^2}$.

\begin{proof}[\textbf{Proof of Lemma \ref{f infinity} }]
	We denote $\beta(N(k))=\frac{J'(N(k))}{J(N(k))}$ and then $f^2(k)$ writes into
	\begin{equation*}
		f^2(k)=\frac{N^2(k) \beta(N(k))}{\log k}.
	\end{equation*}
	We denote $\dot{N}(k)=\frac{dN}{dk}(k)$ and compute the derivative of $f^2(k)$:
	\begin{equation} \label{f derivative}
		(f^2)'(k)=\frac{\dot{N}N\left(2\beta+N\beta'\right)k\log k-N^2\beta}{k\log^2k}.
	\end{equation}
	Taking the logarithm of \eqref{solution N}, we get
	\begin{equation}\label{log solution N}
		2\log N+ 2\log J+\log\beta+2N\beta=\log k.
	\end{equation}
	Differentiating this, we get
	\begin{equation*}
		\dot{N}\left(\frac{2}{N}+2\beta+\frac{\beta'}{\beta}+2\beta+2N\beta'\right)=\dot{N}\left(N\beta'+2\beta\right)\left(\frac{1}{N\beta}+2\right)=\frac{1}{k}.
	\end{equation*}
	We solve $\dot{N}$ and plug it into \eqref{f derivative}:
	\begin{equation*}
		(f^2)'(k)=\frac{N\left(\frac{1}{N\beta}+2\right)^{-1}\log k-N^2\beta}{k\log^2k}
				 =\frac{N\log k-N-2N^2\beta}{k\log^2k \left(\frac{1}{N\beta}+2\right)}.
	\end{equation*}
	Note that $\beta(N(k))=\frac{J'(N(k))}{J(N(k))}>0$ for $N(k)>x_0$ by Lemma \ref{g properties 1}. So it is sufficient to check $\log k-1-2N\beta$ is positive. By \eqref{log solution N},
	\begin{equation*}
		\log k-1-2N\beta=2\log J+\log\left(N^2\beta\right)-1.
	\end{equation*}
	By \eqref{alpha increasing}, $\log\left(N^2\beta\right)$ is bounded below when $N(k)>x_0$. Recall $J(x)$ is increasing to infinity when $x>x_0$ by Lemma 
	\ref{g properties 1} and $N(k)$ is also increasing to infinity. Thus $f(k)$ is strictly increasing when $k$ is large enough. It remains to show that $\lim_{k\rightarrow\infty}f(k)=\infty$. 
	
	Recall from the proof of Lemma \ref{property h} that
	\begin{equation*}
		f^2(k)\log k=h(\eps(k))\geq h(k^{-1/4}).
	\end{equation*}
	The second inequality is valid because $h$ attains the maximum at $\eps(k)$. If $\eps=k^{-1/4}$, then for sufficiently large $k$ satisfying $\frac{4\sqrt{k}}{\log k}\geq x_0$, $N(\eps)= g^{-1}\left(\frac{4\sqrt{k}}{\log k}\right)$ by \eqref{constraint}. Thus,
	\begin{equation*}
	h(k^{-1/4})= \frac{1}{4} g^{-1}\left(\frac{4\sqrt{k}}{\log k}\right) \log k.
	\end{equation*}
	The result follows by the fact that $g(x)=xJ^2(x)$ is increasing to infinity on $(x_0,\infty)$. 
\end{proof} 

\subsection{The second proof of Theorem \ref{Estimate of G in a shrinking neighborhood} in the analytic case.} \label{2ndanalytic} Let $s \in (1, 2)$ and assume $h \in G^1 \subset G^s$. Clearly the majorant is given by $M(N)= N^{(s-1)N}$ and hence correspondingly $J(N)= N^{s-1}$ and $g(N)=NJ^2(N)= N^{2s-1}$. It is clear from Definition \ref{x0} that in this case $x_0 =1$, and therefore $k_0=(s-1)e^{2s-2}$. From the proof of Lemma \ref{g properties}, we also require that $k\geq \frac{2\log(1/\eps_1)}{\eps_1^2}$, where $\eps_1$ only depends $n$ and $h$ but not $s$. In addition, in the proof of estimate \eqref{u bound 1}, we need $N(k) \geq 2n+4$. But since $N(k)$ satisfies \eqref{solution N} we get 
$$N(k) = \left ( \frac{e^{2-2s}}{s-1} \right )^{\frac{1}{2s-1}} k^{\frac{1}{2s-1}} \geq \left ( \frac{e^{2-2s}}{s-1} \right )^{\frac{1}{2s-1}}.$$
Hence by choosing $s$ sufficiently close to $1$ we get $N(k) \geq 2n+4$ for all $k \geq 1$. 
Another place that we need to carefully study the dependence of our constants on $s$ is in the proof  of Lemma \ref{fatest decay}, when we absorb $k^{a(n)}$ into $e^{-bf(k)^2 \log k}$, where $a(n)$ depends only on $n$.  To do this we need to choose $k$ large enough so that $b f(k)^2  \geq 2 a(n)$.   This follows because by \eqref{growth f} for $s$ sufficiently close to $1$, we have
$$ f(k) = \left ( \frac{(s-1)N(k)}{\log k}\right )^{\frac12}=\left ( s-1\right )^{\frac{s-1}{2s-1}} e^{\frac{1-s}{2s-1}} \frac{k^{\frac{1}{4s-2}}}{\sqrt{\log k}}\geq \frac{1}{2} \frac{k^{\frac{1}{6}}}{\sqrt{\log k}}. $$
Finally, since $b$ and $C$ in Theorem \ref{Estimate of G in a shrinking neighborhood} are independent of $s$, we can take the limit as $s\rightarrow 1^{+}$, and obtain $f(k)\rightarrow \frac{k^{\frac{1}{2}}}{\sqrt{\log k}}$, which proves Theorem \ref{Estimate of G in a shrinking neighborhood} in the analytic case.

\subsection{Estimates of the Bergman kernel when $d(z, w) \leq \gamma \sqrt{\frac{\log k}{k}}$}\label{verynear}

In the shrinking neighborhood $d(z,w)\leq \gamma \sqrt{\frac{\log k}{k}}$ for any $\gamma>0$, the asymptotic expansion of Bergman kernel \eqref{ZC} is actually still valid. The following theorem is due to Shiffman and Zelditch in \cite{ShZe} and \cite{ShZeGAFA}. We include a proof here for the completeness using \cite{BBS}.
\begin{thm}Assume $h\in C^{\infty}$.
	Given any positive constant $\gamma$, if $d(z,w)<\gamma \sqrt{\frac{\log k}{k}}$, then we have
	\begin{equation}\label{complete expansion}
	K_k(z,w)=e^{-k\psi(z,\bar{w})}\frac{k^n}{\pi^n}\left(1+\sum_{j=1}^{m-1}\frac{b_j(z,\bar{w})}{k^j}+\frac{1}{k^m}O_{m, \gamma}(1)\right).
	\end{equation}
	Here, $\psi(z, \bar w)$ and $b_j(z, \bar w)$ are (almost) holomorphic extensions of $\phi(z)$ and $b_j(z, \bar z)$ from \eqref{ZC}.
\end{thm}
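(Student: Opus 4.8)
The plan is to obtain the expansion from the off-diagonal Bergman kernel asymptotics of Berman--Berndtsson--Sj\"ostrand \cite{BBS}, combined with a bookkeeping argument that controls the remainder over the shrinking set $\{d(z,w)\le \gamma\sqrt{\log k/k}\}$. (Alternatively one could invoke the near-diagonal scaling asymptotics of \cite{ShZe}, but as indicated in the text we follow \cite{BBS}.)

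\emph{Step 1 (localization).} Cover $X$ by finitely many charts and fix one, $U=B(p,1)$, with local potential $\phi$; extend $\phi$ to a strictly plurisubharmonic weight $\widetilde\phi$ on $\mathbb{C}^n$ agreeing with $\phi$ on $B(p,\tfrac12)$, exactly as in the cutoff constructions of Section \ref{near}. Let $K_k^{\mathrm{loc}}(z,\bar w)$ be the (weighted) Bergman kernel of $L^2(\mathbb{C}^n,e^{-k\widetilde\phi})$. A standard argument --- apply H\"ormander's $L^2$ estimate for $\bar\partial$ to a cutoff of the reproducing kernel of one space to correct it into a section of the other, and use the global off-diagonal decay \eqref{Agmon} --- yields
\begin{equation*}
\big|K_k(z,w)-K_k^{\mathrm{loc}}(z,w)\big|\,e^{-\frac k2\phi(z)-\frac k2\phi(w)}\le C\,e^{-ck}\qquad (z,w\in B(p,\tfrac14));
\end{equation*}
this is carried out in \cite{BBS} (see also \cite{MaMaBook}), and it reduces the problem to proving \eqref{complete expansion} for $K_k^{\mathrm{loc}}$. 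One may instead quote the compact-manifold form of \cite{BBS} and skip this step altogether.

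\emph{Step 2 (the BBS expansion) and Step 3 (the shrinking neighborhood).} By \cite{BBS}, for $z,w$ in a fixed ball about $p$ and every $N\in\mathbb{N}$ there is an expansion of the form
\begin{equation*}
\Big|K_k^{\mathrm{loc}}(z,w)-\frac{k^n}{\pi^n}e^{-k\psi(z,\bar w)}\sum_{j=0}^{N-1}b_j(z,\bar w)k^{-j}\Big|\,e^{-\frac k2\phi(z)-\frac k2\phi(w)}\le C_N\,k^{n-N},
\end{equation*}
where $\psi(z,\bar w)$ is an almost-holomorphic extension of $\phi$ (the genuine holomorphic one of \eqref{Diastasis} when $h$ is real-analytic) and the $b_j(z,\bar w)$ are almost-holomorphic extensions of the coefficients in \eqref{ZC}, with $b_0\equiv 1$ since $dV=\omega^n/n!$. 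By the definition of the diastasis one has $|e^{-k\psi(z,\bar w)}|\,e^{-\frac k2\phi(z)-\frac k2\phi(w)}=e^{-\frac k2 D(z,w)}$ (consistent with \eqref{Shrinking}), so dividing the last display by $\tfrac{k^n}{\pi^n}e^{-\frac k2 D(z,w)}$ gives
\begin{equation*}
\Big|\tfrac{\pi^n}{k^n}\,e^{k\psi(z,\bar w)}K_k^{\mathrm{loc}}(z,w)-\sum_{j=0}^{N-1}b_j(z,\bar w)k^{-j}\Big|\le C_N\,k^{-N}e^{\frac k2 D(z,w)}.
\end{equation*}
Now, for $k$ large depending on $\gamma$ the set $\{d(z,w)\le\gamma\sqrt{\log k/k}\}$ lies in $\bigcup_pB(p,\tfrac14)\times B(p,\tfrac14)$, so the above applies there; since $D(z,w)\le C_0 d(z,w)^2\le C_0\gamma^2\tfrac{\log k}{k}$ on this set, we have $e^{\frac k2 D(z,w)}\le k^{C_0\gamma^2/2}$. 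Given $m$, run the expansion with $N=m+\lceil C_0\gamma^2/2\rceil$: the remainder becomes $\le C_N k^{-N}k^{C_0\gamma^2/2}\le C_{m,\gamma}k^{-m}$, while each surplus term $b_j(z,\bar w)k^{-j}$ with $m\le j\le N-1$ is a bounded function over $k^j$, hence $O_{m,\gamma}(k^{-m})$ there. Absorbing these, together with the $O(e^{-ck})$ error of Step 1 (multiplied by $e^{\frac k2 D}\le k^{O(\gamma^2)}$, still $o(k^{-m})$), we obtain
\begin{equation*}
\tfrac{\pi^n}{k^n}\,e^{k\psi(z,\bar w)}K_k(z,w)=1+\sum_{j=1}^{m-1}b_j(z,\bar w)k^{-j}+k^{-m}O_{m,\gamma}(1),
\end{equation*}
which is \eqref{complete expansion}. (One should also note that on this set the flat ambiguity in the almost-holomorphic extensions changes $e^{\pm k\psi}$ and the $b_j$ by a factor $1+O(k^{-\infty})$, since $k|z-w|^4\lesssim\gamma^4 k^{-1}(\log k)^2\to 0$, so \eqref{complete expansion} is unambiguous.)

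\emph{Main obstacle.} The substantive input is the full off-diagonal expansion of \cite{BBS} with a remainder that is \emph{uniform on a fixed neighborhood of the diagonal} and carries the correct weight $e^{-k\psi(z,\bar w)}$; granting this, the only genuine work is the localization in Step 1 and the simple but essential observation in Step 3 that one must take a few extra terms of the expansion in order to beat the factor $e^{\frac k2 D(z,w)}=k^{O(\gamma^2)}$ produced by renormalizing over the shrinking neighborhood (this is precisely the source of the $\gamma$-dependence in $O_{m,\gamma}(1)$). Matching the sign/normalization conventions between \eqref{Diastasis}, \eqref{ZC}, and the weight $e^{-k\phi}$ is routine but must be done carefully.
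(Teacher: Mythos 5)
Your proof takes essentially the same route as the paper: start from the BBS off-diagonal expansion, renormalize to see the remainder picks up a factor $e^{\frac{k}{2}D(z,w)}$, observe that on the shrinking set this factor is only polynomial in $k$ (of order $k^{O(\gamma^2)}$), and then run the expansion out $O(\gamma^2)$ extra orders and reabsorb. One small caveat: the identity you state, $|e^{-k\psi(z,\bar w)}|e^{-\frac k2\phi(z)-\frac k2\phi(w)}=e^{-\frac k2 D(z,w)}$, holds with $e^{+k\psi}$ rather than $e^{-k\psi}$ (indeed the paper's own intermediate displays carry $e^{k\psi}$ and the final sign change there is evidently a typo inherited into the theorem statement), so be careful to keep the signs consistent throughout.
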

	Note that by taking $m=1$, \eqref{complete expansion} writes into \eqref{Shrinking}, which in particular implies the desired estimates \eqref{eq christ} of the Bergman kernel for $d(z,w)\leq \gamma \sqrt{\frac{\log k}{k}}$.
	  
\begin{proof}
	For any $m\in \mathbb{N}$, when $d(z,w)$ is sufficiently small, we have the off-diagonal expansion from \cite{BBS}:
	\begin{equation*}
	K_k(z,w)=e^{k\psi(z,\bar{w})}\frac{k^n}{\pi^n}\left(1+\sum_{j=1}^{m-1}\frac{b_j(z,\bar{w})}{k^j}\right)+e^{\frac{k}{2}\left(\phi(z)+\phi(w)\right)}k^{-m+n}O_m(1).
	\end{equation*}
	By combining the terms on the right hand side together, we obtain
	\begin{equation*}
	K_k(z,w)=e^{k\psi(z,\bar{w})}\frac{k^n}{\pi^n}\left(1+\sum_{j=1}^{m-1}\frac{b_j(z,\bar{w})}{k^j}+e^{\frac{k}{2}\left(\phi(z)+\phi(w)-2\psi(z,\bar{w})\right)}k^{-m}O_m(1)\right).
	\end{equation*}
	Now we replace $m$ by $m+p$ in the above equation. Then
	\begin{align*}
	K_k(z,w)
	=e^{k\psi(z,\bar{w})}\frac{k^n}{\pi^n}\left(1+\sum_{j=1}^{m+p-1}\frac{b_j(z,\bar{w})}{k^j}+e^{\frac{k}{2}\left(\phi(z)+\phi(w)-2\psi(z,\bar{w})\right)}k^{-m-p}O_{m+p}(1)\right).
	\end{align*}
	For the error term, if we take $p\geq\gamma^2$ we have
	\begin{align*}
	\left|e^{\frac{k}{2}\left(\phi(z)+\phi(w)-2\psi(z,\bar{w})\right)}\right|=e^{\frac{k}{2}D(z,w)}\leq e^{kd^2(z,w)}
	\leq k^{\gamma^2}\leq k^p.
	\end{align*}
	Therefore,
	\begin{align*}
	K_k(z,w)
	=&e^{-k\psi(z,\bar{w})}\frac{k^n}{\pi^n}\left(1+\sum_{j=1}^{m-1}\frac{b_j(z,\bar{w})}{k^j}+\sum_{j=m}^{m+p-1}\frac{b_j(z,\bar{w})}{k^j}+k^p\cdot k^{-m-p}O_{m, \gamma}(1)\right)\\
	=&e^{-k\psi(z,\bar{w})}\frac{k^n}{\pi^n}\left(1+\sum_{j=1}^{m-1}\frac{b_j(z,\bar{w})}{k^j}+ k^{-m}O_{m, \gamma}(1)\right).
	\end{align*}
	So the result follows.
\end{proof}

\section{Estimates of the Green kernel far from the diagonal}\label{far}
In this section, we are going to prove Theorem \ref{main}. By the relation \eqref{Bergman and Green Kernel} between the Bergman kernel and Green kernel. it is sufficient to estimate $G_k(z,w)$ and its derivatives. 
\subsection{Construction of local Green kernel}
\begin{lemma}\label{Local Green Kernel}
	Given any $r$ with $r\geq f(k)\sqrt{\frac{\log k}{k}}$, there exists a bounded linear operator $$T: L^2(X,\Lambda^{0,1}(L^{k}))\rightarrow L^2(X,\Lambda^{0,1}(L^{k}))$$ such that for sufficiently large $k$,
	\begin{itemize}
		\item[(1)] The distribution kernel of $T$ is supported in $\{(z,w): d(z,w)\leq r\}$,
		\item[(2)] $\|T\circ \Box_{k}-I\|\leq e^{-brf(k)\sqrt{k\log k}}$, where $b$ is a positive constant depending on $(L,h)$ and $X$.
	\end{itemize}
\end{lemma}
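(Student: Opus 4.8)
The plan is to build $T$ by patching a cutoff of the \emph{global} Green kernel $G_k$ at the smallest admissible scale, and — when $r$ exceeds that scale — to enlarge the kernel's support while simultaneously improving the error by a finite (truncated) Neumann iteration. Concretely, I would fix a small constant $c_0\in(0,1]$ and set $\rho_0:=c_0 f(k)\sqrt{\tfrac{\log k}{k}}$, so that $\rho_0\le r$ and $\rho_0\le f(k)\sqrt{\tfrac{\log k}{k}}$; since $f(k)\to\infty$ (Lemma \ref{f infinity}), for $k$ large one also has $\rho_0/2>\gamma\sqrt{\tfrac{\log k}{k}}$, where $\gamma$ is the constant in Theorem \ref{Estimate of G in a shrinking neighborhood}. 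Choosing $\chi\in C^\infty([0,\infty))$ with $\chi\equiv 1$ on $[0,\tfrac12]$ and $\supp\chi\subseteq[0,1]$, I would let $T_0$ be the operator on $L^2(X,\Lambda^{0,1}(L^k))$ with kernel $T_0(z,w):=\chi\!\left(d(z,w)/\rho_0\right)G_k(z,w)$. As $\rho_0\to 0$ and $X$ is compact, this kernel lives in finitely many coordinate charts and is supported in $\{d(z,w)\le\rho_0\}\subseteq\{d(z,w)\le r\}$, which is (1) for $T_0$; and $T_0$ is bounded on $L^2$ (with norm $O(1/k)$), using $\|G_k\|\le 2/k$ together with the fact that on the part of $\supp\chi$ away from the diagonal the kernel is $O(1)$ by Theorem \ref{Estimate of G in a shrinking neighborhood}.

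\textbf{The error term.} Using self-adjointness of $\Box_k$ and integration by parts, $T_0\Box_k=I+R_0$ where $R_0(z,w)=\left[\Box_k,\chi(d(z,\cdot)/\rho_0)\right]G_k(z,w)$: the pole of $G_k$ produces exactly the identity because $\chi(0)=1$, the zeroth-order (mass) coefficients of $\Box_k$ cancel in the commutator, and hence $R_0$ involves only $G_k$ and $\nabla_w G_k$ and is supported in the annulus $\{\rho_0/2\le d(z,w)\le\rho_0\}$. There $d(z,w)\sim\rho_0$ lies in the range covered by Theorem \ref{Estimate of G in a shrinking neighborhood}, so $|G_k|_{h^k},|\nabla_w G_k|_{h^k}\le C k^{a}e^{-bk\rho_0^2}$; the differentiated cutoff costs $\rho_0^{-1}=O(\sqrt k)$ and $\rho_0^{-2}=O(k)$, and the $k$-dependent coefficients of $\Box_k$ in \eqref{laplace in local coordinates} cost at most a further power of $k$. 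All of these are polynomial in $k$, hence dominated by $e^{-bk\rho_0^2}=e^{-bc_0^2 f(k)^2\log k}$ once $k$ is large (this is where $f(k)\to\infty$ is used), so I would conclude $|R_0(z,w)|_{h^k}\le e^{-\tfrac b2 c_0^2 f(k)^2\log k}$ and, by Schur's test (the kernel sitting in a ball of radius $\rho_0$), $\|R_0\|_{L^2\to L^2}\le\eta_0:=e^{-\tfrac b4 c_0^2 f(k)^2\log k}<1$.

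\textbf{Enlarging the support.} Put $E_0:=I-T_0\Box_k=-R_0$, so $T_0\Box_k=I-E_0$ and $\|E_0\|\le\eta_0$. Set $M:=\lfloor r/\rho_0\rfloor$ (so $M\rho_0\le r$ and $M\ge r/(2\rho_0)$, the latter since $\rho_0\le r$) and define
$$T:=\sum_{m=0}^{M-1} E_0^m\,T_0 .$$
Then $T$ is bounded, $\|T\|\le\|T_0\|/(1-\eta_0)$, and its kernel — a sum of at most $M$-fold compositions of kernels each supported within distance $\rho_0$ — is supported in $\{d(z,w)\le M\rho_0\le r\}$, which is (1). Telescoping gives $T\Box_k=\sum_{m=0}^{M-1}E_0^m(I-E_0)=I-E_0^M$, hence $\|T\Box_k-I\|\le\eta_0^M$; and since $M\cdot\tfrac b4 c_0^2 f(k)^2\log k\ge\tfrac{r}{2\rho_0}\cdot\tfrac b4 c_0^2 f(k)^2\log k=\tfrac{bc_0}{8}\,r f(k)\sqrt{k\log k}$, this yields $\|T\Box_k-I\|\le e^{-\frac{bc_0}{8}\,r f(k)\sqrt{k\log k}}$, which is (2) after renaming $b$. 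In the analytic case one runs the identical argument with $\rho_0$ a fixed small constant, since Theorem \ref{Estimate of G in a shrinking neighborhood} then holds for $|z-w|\le 1$; this gives the sharper $\|T\Box_k-I\|\le e^{-b k r}$, consistent with $f(k)=\sqrt{k/\log k}$.

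\textbf{Main obstacle.} The only genuinely delicate point is the error estimate above: one must verify that \emph{every} source of $k$-growth in $[\Box_k,\chi]G_k$ — the $O(k)$ and $O(k^2)$ coefficients of $\Box_k$ from \eqref{laplace in local coordinates}, the $\rho_0^{-1},\rho_0^{-2}$ losses from differentiating the cutoff, and the polynomial prefactors already present in Theorem \ref{Estimate of G in a shrinking neighborhood} and Lemma \ref{Apriori Estimates} — grows only polynomially in $k$ and is therefore swallowed by the Gaussian gain $e^{-bk\rho_0^2}$ once $k$ is large. This is also the reason $\rho_0$ cannot be taken larger than $f(k)\sqrt{\log k/k}$ in the non-analytic case: beyond that range Theorem \ref{Estimate of G in a shrinking neighborhood} is unavailable and only the polynomial a priori bound of Lemma \ref{Apriori Estimates} holds, which alone would not beat the polynomial losses.
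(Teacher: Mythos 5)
Your proposal follows essentially the same route as the paper: cut the global Green kernel off at scale $\sim f(k)\sqrt{\tfrac{\log k}{k}}$, bound the commutator error on the annulus where the cutoff gradient lives via Theorem \ref{Estimate of G in a shrinking neighborhood} (the polynomial prefactors being swallowed by the Gaussian gain, since $f(k)\to\infty$), and iterate by a truncated Neumann series of length $\asymp r\sqrt{k}/(f(k)\sqrt{\log k})$ to enlarge the support to $\{d(z,w)\le r\}$ while compounding the error to $e^{-brf(k)\sqrt{k\log k}}$. The only cosmetic difference is that the paper forms $T=P\sum_{j<N}E^j$ with $E=I-\Box_kP$ and then invokes self-adjointness to transfer the bound from $\Box_kT-I$ to $T\Box_k-I$, whereas you compose from the right and get $T\Box_k-I=-E_0^M$ directly; the content is the same.
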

\begin{proof}
	Fix $r$ with $r\geq f(k)\sqrt{\frac{\log k}{k}}$. For any $x,y\in X$, define $$K(z,w)=G_{k}(z,w)\eta\left(\frac{k}{f^2(k)\log k}d(z,w)^2\right),$$ where $\eta$ is a smooth cut-off function such that $\supp\eta\subseteq B(0,1)$ and $\eta=1$ on $B(0,\tfrac{1}{4})$. Then clearly we have $$\supp K\subseteq \{(z,w)\in X\times X :d(z,w)\leq f(k)\sqrt{\tfrac{\log k}{k}}\}.$$ Define $$P:L^2(X,\Lambda^{0,1}(L^{k}))\rightarrow L^2(X,\Lambda^{0,1}(L^{k}))$$ as the operator with distribution kernel $K(z,w)$. Letting $\Box_{k}$ act on $z$ variable and computing derivatives straightforwardly, 
	\begin{equation*}
	\left|\Box_{k}\left(K(z,w)-G_{k}(z,w)\right)\right|
	\leq Ck\left(|G_{k}(z,w)|+|\nabla_z G_{k}(z,w)|\right).
	\end{equation*}
	Since $\left|\Box_{k}\left(K(z,w)-G_{k}(z,w)\right)\right|$ is supported in $$\{(z,w)\in X\times X :\tfrac{1}{2}f(k)\sqrt{\tfrac{\log k}{k}}\leq d(z,w)\leq f(k)\sqrt{\tfrac{\log k}{k}}\},$$ so by Theorem \ref{Estimate of G in a shrinking neighborhood}, we have
	\begin{equation*}
		|G_{k}(z,w)|+|\nabla_x G_{k}(z,w)|\leq C e^{-b f^2(k)\log k},
	\end{equation*}
	and therefore we must have
	\begin{equation*}
	\left|\Box_{k}\left(K(z,w)-G_{k}(z,w)\right)\right|\leq C{k}^{2} e^{-bf^2(k)\log k}.
	\end{equation*}
	In terms of the operator $P$, the above inequality becomes
	\begin{equation*}
	\left\|\Box_{k}\circ P-I\right\|\leq C{k}^{2} e^{-b f^2(k)\log k}.
	\end{equation*}
	If we change $b$ to $\tfrac{b}{2}$, then for sufficiently large $k$,
	\begin{equation*}
	\left\|\Box_{k}\circ P-I\right\|\leq  e^{-bf^2(k)\log k}.
	\end{equation*} 
	Next define operators 
	\begin{align*}
	E=I-\Box_{k}\circ P, \qquad T=P\circ \sum_{j=0}^{N-1}E^j.
	\end{align*}
	Then 
	\begin{equation*}
	\Box_{k}\circ T=(I-E)\circ \sum_{j=0}^{N-1}E^j=I-E^N.
	\end{equation*}
	Taking the operator norm, we get
	\begin{equation*}
	\left\|\Box_{k}\circ T-I\right\|\leq \|E\|^N\leq e^{-bNf^2(k)\log k}.
	\end{equation*}
	Set $N=[\frac{\sqrt{k}r}{f(k)\sqrt{\log k}}]\geq \tfrac{1}{2}\frac{\sqrt{k}}{f(k)\sqrt{\log k}}r$. Then  
	\begin{equation*}
	\|\Box_{k}\circ T-I\|\leq e^{-\tfrac{b\,r}{2}f(k)\sqrt{k\log k}}.
	\end{equation*}
	Since $\Box_{k}$ and $T$ are both formally self-adjoint, we immediately get the same bound for $T\circ \Box_{k}-I$.
	Recall $\supp K\subseteq \{(z,w)\in X\times X: d(z,w)\leq f(k)\sqrt{\frac{\log k}{k}}\}$, so is the distribution kernel of $E$. Therefore, the distribution kernel of $T$ is supported where
	\begin{equation*}
	d(z,w)\leq Nf(k)\sqrt{\tfrac{\log k}{k}}\leq r.
	\end{equation*}
\end{proof}

Now we will prove Theorem \ref{main}. First, note that when $d(z,w)\leq \gamma \sqrt{\frac{\log k}{k}}$, the asymptotic expansion \eqref{Shrinking} is valid and \eqref{eq christ} directly follows, and when $\gamma \sqrt{\frac{\log k}{k}} \leq d(z,w) \leq f(k)\sqrt{\frac{\log k}{k}}$, \eqref{eq christ} follows by Theorem \ref{Estimate of G in a shrinking neighborhood}. It remains to prove the result for $d(z,w)\geq f(k)\sqrt{\frac{\log k}{k}}$.

\subsection{Estimates of the Green kernel when $d(z,w)\geq f(k)\sqrt{\frac{\log k}{k}}$}

\begin{lemma}\label{Green Kernel Estimate for nearby points}
	There exist positive constants $C$ and $b$ such that for any $z,w\in X$ with $d(z,w)\geq f(k)\sqrt{\frac{\log k}{k}}$, we have
	\begin{equation*}
	|{G}_{k}(z,w)|_{C^2}\leq C e^{-bf(k)\sqrt{k\log k}\,d(z,w)}.
	\end{equation*}
\end{lemma}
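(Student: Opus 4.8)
The strategy is the standard Neumann-series / iteration argument from \cite{Ch2}, bootstrapped off the local parametrix $T$ produced in Lemma \ref{Local Green Kernel}. First I would recall that $G_k = \Box_k^{-1}$ and write $G_k = T + (G_k - T) = T + G_k(I - \Box_k T) = T + G_k E^N$ (or equivalently iterate the identity $G_k = T + (I - T\Box_k)G_k$), where $E = I - \Box_k \circ P$ and $N = \big[\tfrac{\sqrt{k}\,r}{f(k)\sqrt{\log k}}\big]$ as in the previous lemma. Choosing $r = c\, d(z,w)$ for a small fixed constant $c$, the kernel of $T$ is supported in $\{d(z,w)\le r\}$, hence vanishes near the pair of points we care about once $c<1$; so on the relevant region $G_k(z,w)$ equals the kernel of $G_k E^N$ (or of the tail $\sum_{j\ge N}P E^j$). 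The operator-norm bound $\|\Box_k T - I\|\le e^{-\frac{br}{2}f(k)\sqrt{k\log k}}$ from Lemma \ref{Local Green Kernel}, combined with $\|G_k\|_{L^2\to L^2}\le 2/k$ from the Weitzenb\"ock lower bound, gives an $L^2$-operator bound of the form $e^{-b' f(k)\sqrt{k\log k}\,d(z,w)}$ (after renaming constants and absorbing the harmless factor $2/k$) for the remainder operator.

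Next I would upgrade this $L^2\to L^2$ operator estimate to a pointwise $C^2$ estimate on the kernel, exactly as in the proof of Lemma \ref{Apriori Estimates}: near $z$ and near $w$ the remainder kernel satisfies $\Box_k(\text{remainder})(\cdot,w) = 0$ (respectively in the $z$ variable, since the $T$-part carries the singularity and the error $E^N$ is supported away from the diagonal slice in question), so one applies the interior elliptic estimates \eqref{Interior Estimates} on balls of radius $R = \min\{1/k, \tfrac14 d(z,w)\}$ followed by Sobolev embedding in both variables. This converts the $L^2$ bound into a bound of the shape $C(k + d(z,w)^{-1})^{a}\, e^{-b f(k)\sqrt{k\log k}\,d(z,w)}$ for the $C^2$ norm; since $d(z,w)\ge f(k)\sqrt{\tfrac{\log k}{k}}$ and $f(k)$ is bounded below (Lemma \ref{f infinity}), the polynomial prefactor $(k + d(z,w)^{-1})^a$ is at most a power of $k$, which is absorbed into $e^{-\frac{b}{2} f(k)\sqrt{k\log k}\,d(z,w)}$ after shrinking $b$, using again that $f(k)\sqrt{k\log k}\,d(z,w)\ge f(k)^2\log k \to\infty$.

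One technical point to handle carefully is the chaining across scales when $d(z,w)$ is large: a single application of Lemma \ref{Local Green Kernel} with $r\sim d(z,w)$ already suffices because the support condition on $T$'s kernel and the geometric decay in $N$ are both linear in $r$; but one must check that the constant $b$ in $e^{-\frac{br}{2}f(k)\sqrt{k\log k}}$ does not degrade as $r$ grows, which is clear from the proof of Lemma \ref{Local Green Kernel} since $b$ there depends only on $(L,h)$ and $X$. The main obstacle, and the only place needing genuine care, is the bootstrapping step: one must verify that on the region where we evaluate the kernel the remainder operator's kernel is genuinely annihilated by $\Box_k$ (so that \eqref{Interior Estimates} applies with no right-hand side), i.e. that the cut-off defining $P$ and the support propagation through $E^N$ keep the singular support inside $\{d(z,w)\le r\}$; this is the same mechanism as in \cite{Ch2} and in Lemma \ref{Apriori Estimates}, so it goes through, but it is the step where the geometry of supports under composition must be tracked explicitly. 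Finally, passing from $G_k$ to $B_k$ via \eqref{Bergman and Green Kernel}, $B_k(z,w) = \bar\partial^*_{k,z}\partial^*_{k,w}G_k(z,w)$, the $C^2$ bound on $G_k$ yields the claimed bound on $|B_k(z,w)|_{h^k}$, completing the proof of Theorem \ref{main} in the far-from-diagonal regime.
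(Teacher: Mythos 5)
Your proposal follows the paper's proof in all essentials: decompose $G_k$ via the local parametrix $T$ of Lemma \ref{Local Green Kernel} applied at scale $r\sim d(z,w)$, use the support of $T$'s kernel to kill the $T$-part near $z$, invoke the operator-norm bound $\|I-T\Box_k\|\le e^{-brf(k)\sqrt{k\log k}}$ together with $\|G_k\|_{L^2\to L^2}\le 2/k$ for the remainder, and then bootstrap the $L^2$ operator bound to a pointwise $C^2$ kernel bound by the interior estimates \eqref{Interior Estimates} and Sobolev embedding in each variable, absorbing the polynomial prefactor into the exponential since $f(k)\sqrt{k\log k}\,d(z,w)\ge f^2(k)\log k\to\infty$. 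Your two variants of the decomposition ($G_k=T+G_kE^N$ versus $G_k u=Tu+(I-T\Box_k)G_ku$) are adjoint formulations of the same identity, and the choice $r=c\,d(z,w)$ with $c\le 1/2$ (rather than strictly $c<1$) is what is actually needed for the support disjointness, exactly as in the paper with $c=1/2$.
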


\begin{proof}
	Given $z,w\in X$ with $d(z,w)\geq f(k)\sqrt{\frac{\log k}{k}}$. By taking necessary embedding and restriction, we can regard $G_{k}$ as an operator from $L^2(B(y,\frac{1}{4}d(z,w)),\Lambda^{0,1}(L^{k}))$ to $L^2(B(x,\frac{1}{4}d(z,w)),\Lambda^{0,1}(L^{k}))$.  Set $r=d(z,w)$. Let $T(z',w')$ be a local Green kernel constructed in Lemma \ref{Local Green Kernel} supported where $d(z',w')\leq \frac{r}{2}$ and $T$ be the associated operator. Then for any $u\in L^2(B(w,\frac{1}{4}d(z,w)),\Lambda^{0,1}(L^{k}))$ with $\supp u\subseteq B(w,\frac{1}{4}d(z,w))$, we have
	\begin{align*}
	G_{k}u=T\Box_{k}G_{k}u+\left(I-T\Box_{k}\right)G_{k}u
	=Tu+\left(I-T\Box_{k}\right)G_{k}u.
	\end{align*} 
	Since $\supp Tu\subseteq B(w,\frac{1}{4}d(z,w)+\frac{r}{2})$, which is disjoint from $B(z,\frac{1}{4}d(z,w))$,
	\begin{align*}
	\|G_{k}u\|_{L^2(B(z,\frac{1}{4}d(z,w)))}
	=&\|\left(I-T\Box_{k}\right)G_{k}u\|_{L^2(B(z,\frac{1}{4}d(z,w)))}\\
	\leq& \tfrac{2}{k}e^{-b r f(k)\sqrt{k\log k}} \|u\|_{L^2(B(w,\frac{1}{4}d(z,w)))}.
	\end{align*}
	The second inequality follows by the second part of Lemma \ref{Local Green Kernel} and the fact that $\Box_{k}$ is bounded below by $\tfrac{k}{2}$ for sufficiently large $k$.
	Thus as an operator from $L^2(B(w,\frac{1}{4}d(z,w)),\Lambda^{0,1}(L^{k}))\rightarrow L^2(B(z,\frac{1}{4}d(z,w)),\Lambda^{0,1}(L^{k}))$, 
	\begin{equation*}
		\|G_{k}\|\leq \tfrac{2}{k}e^{-b r f(k)\sqrt{k\log k}}\leq e^{-b r f(k)\sqrt{k\log k}}.
	\end{equation*}
	Since $\Box_{k}G_{k}u=u$, which vanishes in $B(z,\frac{1}{4}d(z,w))$, if we apply the interior estimates \eqref{Interior Estimates} to $G_{k}u$, then for any $m\in \mathbb{N}$,
	\begin{equation*}
	\|\eta G_{k}u\|_{H^m(B(z,\frac{1}{k}))}
	\leq Ck^{m}\|G_{k}u\|_{L^2(B(z,\frac{1}{k}))}
	\leq Ck^me^{-b r f(k)\sqrt{k\log k}} \|u\|_{L^2(B(w,\frac{1}{4}d(z,w))},
	\end{equation*} 
	where $C$ is a constant only depending on $m$ and \k potential $\phi$. Taking $m=n+1$ and applying Sobolev embedding theorem,
	\begin{equation*}
	|G_{k}u(z)|_{h^{k}(z)}
	\leq 
	Ck^{n+1}e^{-b r f(k)\sqrt{k\log k}} \|u\|_{L^2(B(w,\frac{1}{4}d(z,w)))}.
	\end{equation*}  
	Therefore,
	\begin{equation*}
	\|G_{k}(z,\cdot)\|_{L^2(B(w,\frac{1}{4}d(z,w)))}
	=\sup_{u\in L^2(B(w,\frac{1}{4}d(z,w)))} \frac{\left|\int u(\zeta)G_{k}(z,\zeta)e^{-k\phi(\zeta)}d\zeta\right|_{h^{k}(z)}}{\|u\|_{L^2(B(w,\frac{1}{4}d(z,w)))}}\leq Ck^{n+1}e^{-b r f(k)\sqrt{k\log k}}.
	\end{equation*}
	Since for any $\zeta\in B(w,\frac{1}{4}d(z,w))$, we have
	\begin{equation*}
	\Box_{k}G_{k}(z,\zeta)=0,
	\end{equation*}
	where $\Box_{k}$ acts on the second component, again using the interior estimates \eqref{Interior Estimates} and Sobolev embedding theorem, we have
	\begin{equation*}
	|G_{k}(z,w)|_{h^{k}}
	\leq C k^{n+1}	\|G_{k}(z,\cdot)\|_{L^2(B(w,\frac{1}{4}d(z,w)))}
	\leq 
	Ck^{2n+2}e^{-b r f(k)\sqrt{k\log k}}.
	\end{equation*}
	Therefore,
	\begin{equation*}
	|G_{k}(z,w)|_{h^{k}}
	\leq
	Ck^{2n+2}e^{-b f(k)\sqrt{k\log k}\, d(z,w)}
	\leq 
	C'e^{-\tfrac{b}{2} f(k)\sqrt{k\log k}\, d(z,w)}.
	\end{equation*}	
	Note the constant $C$ only depends on the dimension $n$, constant $A$ as in \eqref{eq majorant} for $\partial^2\phi$ and positivity of $\sqrt{-1}\partial\bar{\partial}\varphi$. Then using a standard bootstrapping argument, we obtain the estimates for the $C^2$ norm of $G_k(z,w)$ with respect to $z$ and $w$ variables.  
\end{proof}

\subsection{Proof of Corollary \ref{Analytic}} In Theorem \ref{Estimate of G in a shrinking neighborhood}, we proved that in the particular case when $h$ is analytic we have for any $k\geq \kappa$
\begin{equation*}
	|\mathcal{G}_{k}(z,w)|+|\nabla_{z}\mathcal{G}_{k}(z,w)| + |\nabla_{z} \nabla_{w}\mathcal{G}_{k}(z,w)|\leq C e^{-bk |z-w|^2},
	\end{equation*} 
	whenever $\gamma\sqrt{\frac{\log k}{k}} \leq |z-w|\leq 1$, which in terms of the distance function on $X$ can be considered as $ \gamma \sqrt{\frac{\log k}{k}} \leq d(z, w) \leq \delta$ for some $\delta >0$ and a new $\gamma$ comparable with the previous one.   We then use the same iterative argument as above with $f(k)= \frac{k^\frac12}{\sqrt{\log k}}$, combined with a covering argument, to obtain
	$$|G_{k}(z,w)|_{C^2} \leq C e^{-b k \,d(z,w)} \quad \text{for} \quad d(z, w) \geq \delta, $$ which implies the same estimates for the Bergman kernel. But note that since $X$ is compact, in this estimate we can replace $d(z, w)$ with $d(z, w)^2$ by making $b$ a bit smaller; for example changing $b$ to $\frac{b}{\text{diam}(X)}$, with $\text{diam}(X)$ being the diameter of $X$, would do the job.

\section*{Acknowledgements} We are grateful to M. Christ for communicating to us a sketch of the proof of Corollary \ref{Analytic}. The second author would like to thank Z. Lu and B. Shiffman for their constant support and
mentoring.


\end{document}